\numberwithin{equation}{section}
 \newtheorem{thm}{Theorem}[section]
 \newtheorem{lem}[thm]{Lemma}
 \newtheorem{exam}[thm]{Example}
 \newtheorem{prop}[thm]{Proposition}
 \newtheorem{cor}[thm]{Corollary}
 \newtheorem{rem}[thm]{Remark}
 \newtheorem{defn}[thm]{Definition}
 \def\Id{\mathop{\rm Id}\nolimits}
 \def\re{\mathop{\rm Re\,}\nolimits}
 \def\dist{\mathop{\rm dist}\nolimits}
 \def\Im{\mathop{\rm Im}\nolimits}
\newcommand{\A}{{\mathcal A}}   
\newcommand{\C}{{\mathbb C}}   
 \newcommand{\R}{{\mathbb R}}           
\newcommand{\N}{{\mathbb N}}        
\newcommand{\K}{{\mathbb K}}        
\newcommand{\LX}{{\mathcal L}(X)}   
\newcommand{\LH}{{\mathcal L}(H)}  
\newcommand{\LY}{{\mathcal L}(Y)}  
 \newcommand{\KX}{{\mathcal K}(X)}    
\author{W. Arendt}
\address{Wolfgang Arendt, Institute of Applied Analysis, University of Ulm. Helmholtzstr. 18, D-89069 Ulm (Germany)} 
\email{wolfgang.arendt@uni-ulm.de}
\author{I. Chalendar}
\address{Isabelle Chalendar,  Universit\'e Gustave Eiffel, LAMA, (UMR 8050), UPEM, UPEC, CNRS, F-77454, Marne-la-Vallée (France)}
\email{isabelle.chalendar@u-pem.fr}
\title[Essentially Coercive Forms]{Essentially Coercive Forms and asympotically compact semigroups}
\keywords{sesquilinear coercive form, essentially coercive forms, elliptic forms,  numerical range, essential spectrum, selfadjoint operators, asymptotically compact semigroups}
\subjclass[2010]{47A07, 47B44, 47A12, 47A10, 47D03}
\begin{document}	
\begin{abstract}   
Form methods are most efficient to prove generation theorems for semigroups but also for proving selfadjointness. So far those theorems are based on a coercivity notion which allows the use of the Lax-Milgram Lemma. Here we consider weaker "essential" versions of coerciveness which already suffice to obtain the generator of a semigroup $S$ or a selfadjoint operator. We also show that one of these properties, namely \emph{essentially positive coerciveness} implies a very special asymptotic behaviour of $S$, namely \emph{asymptotic compactness}; i.e. that $\dist(S(t),{\mathcal K}(H))\to 0$ as $t\to\infty$, where ${\mathcal K}(H)$ denotes the space of all compact operators on the underlying Hilbert space. 
\end{abstract}	
\maketitle
    
\section{Introduction}\label{sec:1}
Form methods are most efficient for proving well-posedness results for parabolic equations. We refer to the monography of Kato \cite{Ka80} and Ouhabaz \cite{Ou} for example. 

The scenario is the following. Given are Hilbert spaces $H$, $V$ such that $V\underset{d}{\hookrightarrow} H$ (i.e. $V$ is densely and continuously embedded into $H$) and a continuous sesquilinear form $a:V\times V\to\C$. Then there is a unique operator $A$ on $H$ whose graph is given by  
\[     G(A)  =\{ (u,f):u\in V,a(u,v)=\langle f,v\rangle_H\mbox{ for all }v\in V\}.\] 
We call $A$ the \emph{operator associated with $a$} and write $A\sim a$. The following is a classical result. Assume that $a$ is \emph{positive-coercive}, i.e. 
\[   \re a(u,u) \geq \alpha \|u\|_V^2\mbox{ for all }u\in V\mbox{ and some }\alpha>0.  \] 
Then $-A$ generates a holomorphic $C_0$-semigroup $S$ which is \emph{uniformly exponentially stable}, i.e. 
\[     \|S(t)\|\leq Me^{-\delta t} \mbox{ for all }t>0\mbox{ and for some }\delta>0,M\geq 0. \]
The purpose of this paper is to relax the condition of positive coerciveness as well as similar notions and to replace them  by a weaker topological notion, which we call "essential versions". 
\begin{defn}\label{def:1.1}
A continuous and sesquilinear form $a$ on $V\times V$ is called \emph{essentially positive-coercive} if 
\[ u_n\rightharpoonup 0\mbox{ in }V\mbox{ and }\limsup_{n\to\infty}\re a(u_n,u_n)\leq 0  \]
implies $\|u_n\|_V\to 0$ as $n\to\infty$. 
\end{defn}
Here $u_n\rightharpoonup 0$ means that $u_n$ converges weakly to $0$ in $V$.
Then we show the following.
\begin{thm}\label{th:1.2}
Assume that $a$ is essentially positive-coercive. Then $-A$ generates a holomorphic $C_0$-semigroup $S$ which is asymptotically compact. 
\end{thm}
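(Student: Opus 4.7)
The plan is to reduce essential positive-coercivity to ordinary positive-coerciveness up to a bounded shift, invoke the classical generation result, and then exploit the hypothesis more carefully to control the essential spectrum of $A$, from which asymptotic compactness follows by a Riesz-projection decomposition. First I would show that $a$ is essentially positive-coercive if and only if there exist $\omega\geq 0$ and $\alpha>0$ with $\re a(u,u)+\omega\|u\|_H^2\geq\alpha\|u\|_V^2$ for all $u\in V$. The non-trivial implication is by contradiction: otherwise one finds $u_n\in V$ with $\|u_n\|_V=1$ and $\re a(u_n,u_n)+n\|u_n\|_H^2<1/n$; continuity of $a$ then forces $\|u_n\|_H\to 0$, and a weakly convergent subsequence in $V$ has weak limit $0$ (by continuity and injectivity of $V\hookrightarrow H$), yielding $u_n\rightharpoonup 0$ in $V$ with $\limsup\re a(u_n,u_n)\leq 0$, contradicting $\|u_n\|_V=1$ via the assumption. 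With this estimate in hand, the classical result recalled in Section~\ref{sec:1} applied to $a+\omega\langle\cdot,\cdot\rangle_H$ produces a holomorphic exponentially stable $C_0$-semigroup $T$ with generator $-(A+\omega I)$; hence $-A$ generates the holomorphic semigroup $S(t)=e^{\omega t}T(t)$.

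The heart of the proof is the claim $\delta_0:=\inf\{\re\lambda:\lambda\in\sigma_{\mathrm{ess}}(A)\}>0$. Pointwise positivity $\re\lambda>0$ on $\sigma_{\mathrm{ess}}(A)$ follows from a Weyl singular-sequence argument: given $\lambda\in\sigma_{\mathrm{ess}}(A)$ with $\re\lambda\leq 0$, take $u_n\in\dom(A)$ with $\|u_n\|_H=1$, $u_n\rightharpoonup 0$ in $H$, and $(A-\lambda)u_n\to 0$; then $a(u_n,u_n)=\langle Au_n,u_n\rangle\to\lambda$, the coercivity estimate bounds $\|u_n\|_V$, a subsequence converges weakly in $V$ to $0$, and essential positive-coercivity forces $\|u_n\|_V\to 0$, hence $\|u_n\|_H\to 0$, contradicting $\|u_n\|_H=1$. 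The uniform bound $\delta_0>0$ is obtained by the same argument applied diagonally: for $\lambda_n\in\sigma_{\mathrm{ess}}(A)$ with $\re\lambda_n\to 0$, one picks $u_n$ from the $n$-th singular sequence so that $u_n$ is nearly orthogonal to the first $n$ vectors of an orthonormal basis of an enveloping separable subspace, ensuring both $u_n\rightharpoonup 0$ in $H$ and $\re a(u_n,u_n)\to 0$.

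To conclude, since $a+\omega\langle\cdot,\cdot\rangle_H$ is sectorial, $\sigma(A)$ lies in a translated sector, so $\sigma(A)\cap\{\re z\leq\delta_0/2\}$ is compact; being disjoint from $\sigma_{\mathrm{ess}}(A)$, it reduces to finitely many eigenvalues of finite algebraic multiplicity. Letting $P$ be the associated Riesz projection (finite rank and commuting with $S(t)$), $S(t)P$ is compact, while on $(I-P)H$ the generator has spectrum in $\{\re z<-\delta_0/2\}$. Holomorphy ensures that spectral bound equals growth bound on this invariant subspace, so $\|S(t)(I-P)\|\leq Me^{-\delta_0 t/2}$, and hence
\[ \dist(S(t),{\mathcal K}(H)) \leq \|S(t)(I-P)\| \to 0 \mbox{ as } t\to\infty. \]
The main obstacle is precisely the strict positivity of $\delta_0$: the pointwise version is nearly a direct translation of the hypothesis, but excluding accumulation of essential spectrum at the imaginary axis requires the diagonal extraction and a careful treatment of weak convergence in (possibly non-separable) $H$.
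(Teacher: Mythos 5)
Your route is genuinely different from the paper's. The paper proves a structure theorem (its Theorem 5.4): essential positive-coercivity is exactly positive-coercivity up to a compact perturbation $K:V\to V'$, and then shows that such form perturbations leave $\omega_{ess}$ unchanged, via the Desch--Schappacher theorem applied to the operator ${\mathcal A}$ on $V'$ together with an interpolation result transferring $\omega_{ess}$ between $V'$ and $H$; asymptotic compactness then follows because the perturbed form is positive-coercive and hence generates a uniformly exponentially stable semigroup. You instead try to locate the essential spectrum directly ($\sigma_{ess}(A)\subset\{\re\lambda\geq\delta_0\}$ with $\delta_0>0$) and pass to asymptotic compactness by a finite-rank Riesz projection plus ``spectral bound equals growth bound'' for holomorphic semigroups; that last passage is sound, and your Step 1 (essential positive-coercivity $\Rightarrow$ $H$-ellipticity, hence generation) is exactly the paper's Proposition 6.1. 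Note, however, that the claimed ``if and only if'' in Step 1 is false in general (the converse holds only when $V\hookrightarrow H$ is compact); fortunately you only use the correct direction.

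The genuine gap is the opening move of your ``heart of the proof'': for $\lambda\in\sigma_{ess}(A)$ you take a singular Weyl sequence $u_n\in\dom(A)$, $\|u_n\|_H=1$, $u_n\rightharpoonup 0$ in $H$, $(A-\lambda)u_n\to 0$. The paper's $\sigma_{ess}$ is the Fredholm essential spectrum, and $A$ is in general neither selfadjoint nor normal; the existence of such a sequence characterizes failure of \emph{upper} semi-Fredholmness (finite-dimensional kernel and closed range), not failure of Fredholmness. A point can belong to $\sigma_{ess}(A)$ solely because the (closed) range has infinite codimension, and then no singular sequence for $A-\lambda$ exists (think of an isometry with infinite-dimensional cokernel), so your contradiction argument says nothing about such points; the same issue affects the diagonal argument for $\delta_0>0$ (which is otherwise fine: the near-orthogonality trick does give a weakly null diagonal sequence). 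The gap is repairable in at least two ways: (a) run the identical argument for the adjoint form $a^*(u,v)=\overline{a(v,u)}$, which has the same real part, hence is also essentially positive-coercive, and is associated with $A^*$, thereby excluding cokernel blow-up as well; or (b) observe that your argument does show that $\lambda-A$ is upper semi-Fredholm for all $\re\lambda<\delta_0$, and since $\{\re z<\delta_0\}$ is connected and meets $\rho(A)$ (the spectrum lies in a translated sector), constancy of the index on the semi-Fredholm domain forces the index to be $0$ there, hence Fredholmness. With either repair, the remainder of your argument (finitely many isolated eigenvalues with finite-rank spectral projections in $\{\re z\leq\delta_0/2\}$, exponential decay of $S(t)(\Id-P)$, hence exponential decay of $\|S(t)\|_{ess}$) goes through and yields the theorem without the paper's perturbation machinery.
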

Here we call $S$ \emph{asymptotially compact} (sometimes called \emph{quasicompact} in the literature) if 
	\[   \|S(t)\|_{ess}\leq Me^{-\delta t} \mbox{ for all }t\geq 0,  \]
where, for $T\in\LH$, $\|T\|_{ess}$ is the distance of $T$ to the space of all compact operators on the Hilbert space $H$, denoted by ${\mathcal K}(H)$. We also
 show that each asymptotically compact quasi-contractive holomorphic semigroup is obtained via a form in this way if we allow to pass to an equivalent scalar product on $H$. Theorem~\ref{th:1.2} seems of interest for two reasons. A specific topological property of the form $a$ is responsible for the fact that the semigroup behaves like a finite dimensional system at infinity. But also the pure generation property is surprising since no range condition on the operator is needed. Much more generally, we investigate the \emph{numerical range}
 \[     W(a):=\{     a(u,u):u\in V,\|u\|_H=1\}\]
of the form  $a$ (which is a convex set). 

We call the form $a$ \emph{coercive} if 
\[  |a(u,u)|\geq \alpha \|u\|_V^2  \mbox{ for all }u\in V\mbox{ and some }\alpha>0.    \]    
Note that the real part in the definition of positive-coerciveness is replaced by the absolute value. 

We call $a$ \emph{essentially coercive} if 
\[ u_n\rightharpoonup 0\mbox{ in }V\mbox{ and }a(u_n,u_n)\to 0\mbox{ implies }\|u_n\|_V\to 0.      \] 
Then we show in Section~\ref{sec:2} that 
\begin{equation*}\label{eq:1.1}
\sigma(A)\subset \overline{W(a)}=\overline{W(A)},
\end{equation*}	
whenever $a$ is essentially coercive.

This spectral inclusion is remarkable  since it fails in general for unbounded operators. It implies generation results. For example we show the following: if $a$ is essentially coercive and accretive, then $-A$ is m-accretive. In other words, again, the  range condition comes automatically; i.e. it is a consequence of essential coerciveness.   We investigate in particular the case where $a$ is symmetric. Letting 
\[   a_\lambda(u,v):=a(u,v)-\lambda \langle u,v \rangle_H,  \]	
we prove the following. 
\begin{thm}\label{th:1.3}
Assume that $a$ is symmetric and that there exists $\lambda\in\C$ such that $a_\lambda$ is essentially coercive. Then $A$ is selfadjoint and semibounded. Moreover, if $\lambda\in\R$, then \[\sigma_{ess}(A)\subset (-\infty,\lambda)\mbox{ or }\sigma_{ess}(A)\subset (\lambda,\infty).\]  
\end{thm}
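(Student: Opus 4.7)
The proof splits into three stages: selfadjointness, one-sided location of $\sigma_{ess}(A)$, and semiboundedness.

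\emph{Selfadjointness.} Symmetry of $a$ directly gives symmetry of $A$: for $u,v \in D(A)$, $\langle Au, v\rangle_H = a(u,v) = \overline{a(v,u)} = \langle u, Av\rangle_H$. The operator associated with $a_\lambda$ is readily seen to be $A - \lambda I$, so the spectral inclusion of Section~\ref{sec:2} (applicable since $a_\lambda$ is essentially coercive) yields $\sigma(A - \lambda I) \subset \overline{W(a_\lambda)} = \overline{W(a)} - \lambda$. Symmetry of $a$ forces $W(a) \subset \R$, hence $\sigma(A) \subset \R$. A symmetric operator whose resolvent set contains $\C \setminus \R$ is selfadjoint.

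\emph{Location of the essential spectrum.} Assume $\lambda \in \R$. The central preliminary observation, proved via the open mapping theorem, is that for every bounded Borel set $\Omega \subset \R$ the spectral subspace $H_\Omega := E(\Omega)H$ is contained in $D(A) \subset V$, is closed in both $H$ and $V$, and the norms $\|\cdot\|_H$ and $\|\cdot\|_V$ are equivalent on $H_\Omega$. First, essential coerciveness of $a_\lambda$ rules out $\lambda \in \sigma_{ess}(A)$: otherwise orthonormal $u_n \in H_{(\lambda-1/n, \lambda+1/n)}$ would be $V$-bounded (by the norm equivalence on $H_{(\lambda-1,\lambda+1)}$), with $|a_\lambda(u_n,u_n)| \leq 1/n \to 0$ and $u_n \rightharpoonup 0$ in $H$; passing to a $V$-weakly null subsequence and applying essential coerciveness would force $\|u_n\|_V \to 0$, contradicting $\|u_n\|_H = 1$. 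Next, if there existed $\mu_- \in \sigma_{ess}(A) \cap (-\infty,\lambda)$ and $\mu_+ \in \sigma_{ess}(A) \cap (\lambda,\infty)$, pick $\delta$ smaller than $\min(\lambda-\mu_-, \mu_+-\lambda)$ and orthonormal sequences $u_n \in H_{(\mu_+-\delta, \mu_++\delta)}$, $v_n \in H_{(\mu_--\delta, \mu_-+\delta)}$; both are $V$-bounded and $H$-weakly null, and disjointness of spectral supports gives $a(u_n, v_n) = 0 = a(v_n, u_n)$. Along a subsequence $a(u_n, u_n) \to c_+$ and $a(v_n, v_n) \to c_-$ with $c_- < \lambda < c_+$; choose $\alpha, \beta > 0$ with $\alpha^2 + \beta^2 = 1$ and $\alpha^2 c_+ + \beta^2 c_- = \lambda$. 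Then $w_n := \alpha u_n + \beta v_n$ is $H$-unit, $H$-weakly null, $V$-bounded, and $a_\lambda(w_n, w_n) \to 0$, so essential coerciveness again produces a contradiction. Hence $\sigma_{ess}(A) \subset (-\infty,\lambda)$ or $\sigma_{ess}(A) \subset (\lambda,\infty)$.

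\emph{Semiboundedness.} Since $\sigma(A) \subset \overline{W(a)}$ and the latter is convex in $\R$, semiboundedness reduces to ruling out $\overline{W(a)} = \R$, i.e.\ to showing that $\sigma(A)$ does not extend unboundedly in both directions. If it did, there would be an orthonormal eigenvector sequence $\phi_n$ with eigenvalues $\mu_n \to -\infty$ and a matching sequence $\psi_n$ from above $\lambda$: either an orthonormal family in $H_{(\mu-\delta,\mu+\delta)}$ for some $\mu \in \sigma_{ess}(A)\cap(\lambda,\infty)$, or rescaled eigenvectors with eigenvalues $\mu'_n \to +\infty$ if no such essential spectrum is present. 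Boundedness of $a$ forces $\|\phi_n\|_V \to \infty$ (and similarly for $\psi_n$ in the eigenvector case), so $\tilde\phi_n := \phi_n/\|\phi_n\|_V$ is $V$-unit and $H$-null; the analogous rescaling $\tilde\psi_n$ is used as needed. If $a(\tilde\phi_n,\tilde\phi_n)$ (or the analogue for $\tilde\psi_n$) tends to $0$ along a subsequence, essential coerciveness yields an immediate contradiction with the $V$-unit condition; otherwise the subsequential limits are nonzero of opposite sign, the $H$-orthogonality of the underlying spectral supports makes the cross $a$-term vanish, and a real linear combination $w_n = \alpha \tilde\phi_n + \beta \tilde\psi_n$, with $\alpha^2/\beta^2$ tuned so that $a_\lambda(w_n,w_n) \to 0$ and the sign of $\alpha\beta$ chosen so that $\|w_n\|_V$ stays bounded below, produces the desired contradiction.

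The principal technical obstacle throughout is keeping the test sequences bounded (and, in the semiboundedness step, bounded below) in the $V$-norm, so that the hypothesis of essential coerciveness can be legitimately invoked; this is achieved by the closed-graph norm equivalence on bounded spectral subspaces, by rescaling eigenvectors associated with unbounded eigenvalues, and by appropriate sign choices when forming linear combinations.
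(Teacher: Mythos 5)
Your proposal is correct in substance, but it takes a genuinely different route from the paper. Selfadjointness is obtained exactly as you do (Theorem~\ref{th:new26} via the spectral inclusion of Theorem~\ref{th:new23}). For the remaining two assertions, however, the paper never invokes the spectral theorem for $A$: semiboundedness is proved in Theorem~\ref{th:8.2} by passing from $a_\lambda$ to $a_i$ (Lemma~\ref{lem:perturb}), rotating to an essentially positive-coercive form (Proposition~\ref{prop:53}), and using $H$-ellipticity (Proposition~\ref{prop:6.1}); the localization of $\sigma_{ess}(A)$ comes from the dichotomy of Theorem~\ref{th:erc} (either $a_\lambda$ or $-a_\lambda$ is essentially positive-coercive), the compact-perturbation characterization of Theorem~\ref{th:54}, and the semigroup results of Section~\ref{sec:6} (Desch--Schappacher perturbation and the interpolation of essential growth bounds, leading to Corollaries~\ref{cor:6.4}--\ref{cor:6.6}). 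You instead exploit the already-established selfadjointness and argue directly with spectral projections: the closed-graph equivalence of $\|\cdot\|_H$ and $\|\cdot\|_V$ on bounded spectral subspaces, and explicit test sequences that would violate essential coerciveness of $a_\lambda$. Your route is more elementary and self-contained (no Section~\ref{sec:5} structure theorems, no semigroup machinery), but it is tied to the symmetric case, whereas the paper's machinery simultaneously yields the non-symmetric results and asymptotic compactness.

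Two small repairs are needed in your semiboundedness step. The unbounded part of $\sigma(A)$ need not consist of eigenvalues, so replace ``orthonormal eigenvectors with eigenvalues $\mu_n\to-\infty$'' by unit vectors $\phi_n\in E((\mu_n-1,\mu_n+1))H$ with $\mu_n\in\sigma(A)$, $\mu_n\to-\infty$; your argument only uses the spectral supports, so nothing else changes. With the same replacement for the upper sequence, the alternative involving a finite point of $\sigma_{ess}(A)\cap(\lambda,\infty)$ becomes superfluous: under the contradiction hypothesis the spectrum is unbounded above, the rescaled construction is always available, and since the rescaled vectors are $H$-null the term $-\lambda\|\cdot\|_H^2$ vanishes in the limit, which also covers non-real $\lambda$ (where your finite-$\mu$ alternative would not permit real coefficients to achieve the cancellation). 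Finally, the sign of $\alpha\beta$ should be fixed after passing to a subsequence along which $\re\langle\tilde\phi_n,\tilde\psi_n\rangle_V$ has constant sign, so that one choice works for all $n$.
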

We also show that each semibounded selfadjoint operator can be obtained in this way by a unique essentially coercive form. 

We should say some words about preceding results. The notion of essential coerciveness  has been 
introduced in \cite{ace}, where it is shown that this property is equivalent to the convergence of arbitrary Galerkin approximations defined by the form. So the motivation and applications (to finite elements) were completely different in this preceding paper.  A predecessor with investigations on semigroups is the paper \cite{AtEKS14} where compactly elliptic forms play a role for investigating the Dirichlet-to-Neumann operator. One of our result shows that compact ellipticity is actually equivalent to essential positive coerciveness.
We also mention that  ter Elst, Sauter and Vogt \cite{tESV15} established form methods which lead to contractive semigroups which are not necessarily holomorphic. However, the use of topological conditions such as essential coerciveness (Definion~\ref{def:1.1}) seems to be new.

The paper is organized as follows:
\tableofcontents

\section{Numerical range and essential coercivity}\label{sec:2}

Let $V$ be  a Hilbert space over $\K=\R$ or $\C$.


The notion of essential coerciveness has been introduced in \cite{ace} to study Galerkin approximation. For our purposes, the following Fredholm-property plays a crucial role. 
\begin{prop}\label{propace:2.1}[\cite[Corollary 4.5]{ace}]
Let $a:V\times V\to \K$ be a continuous, essentially coercive form which \emph{satisfies uniqueness} i.e. 
\begin{equation}\label{eq:uni}
\mbox{for }u\in V, \, a(u,v)=0\mbox{ for all }v\in V\mbox{ implies }u=0.
\end{equation}   
Then for all $f\in V'$, there exists a unique $u\in V$ such that $a(u,v)=\langle f,v\rangle$ for all $v\in V$.  
\end{prop}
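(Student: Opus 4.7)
Associate with $a$ the bounded linear operator $\mathcal{A}: V \to V'$ defined by $\langle \mathcal{A}u, v\rangle_{V',V} = a(u,v)$. The statement to prove is exactly the bijectivity of $\mathcal{A}$. Uniqueness gives $\ker\mathcal{A} = \{0\}$, and by the open mapping theorem it remains to show $\mathcal{A}$ is surjective. I would derive this by showing that $\mathcal{A}$ is a Fredholm operator of index zero.

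\textbf{Finite-dimensional kernel and closed range.} If $\ker\mathcal{A}$ contained an orthonormal sequence $(u_n)$, one would have $u_n \rightharpoonup 0$ in $V$ and $a(u_n,u_n) = 0$, contradicting essential coercivity. To see that the range is closed, suppose $\mathcal{A}x_n \to g$ in $V'$ with $x_n \in (\ker\mathcal{A})^\perp$. The key is to show $(x_n)$ is bounded; otherwise, after passing to a subsequence with $\|x_n\|_V \to \infty$ and normalising $\tilde{x}_n = x_n/\|x_n\|_V$, one has $\mathcal{A}\tilde{x}_n \to 0$ in $V'$, and, passing to a further subsequence, $\tilde{x}_n \rightharpoonup \tilde{x}$ in $V$. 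Expanding sesquilinearly,
\[
a(\tilde{x}_n - \tilde{x},\, \tilde{x}_n - \tilde{x}) = \langle \mathcal{A}\tilde{x}_n, \tilde{x}_n - \tilde{x}\rangle - a(\tilde{x},\, \tilde{x}_n - \tilde{x}),
\]
and the right-hand side tends to $0$: the first term because $\mathcal{A}\tilde{x}_n \to 0$ strongly and $(\tilde{x}_n)$ is bounded, the second because $v \mapsto a(\tilde{x}, v)$ is continuous. Since $\tilde{x}_n - \tilde{x} \rightharpoonup 0$, essential coercivity yields $\tilde{x}_n \to \tilde{x}$ in $V$ with $\|\tilde{x}\|_V = 1$. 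But then $\mathcal{A}\tilde{x} = 0$ while $\tilde{x} \in (\ker\mathcal{A})^\perp$, contradiction. Hence $(x_n)$ is bounded; a weakly convergent subsequence $x_{n_k} \rightharpoonup x$ then gives $\mathcal{A}x = g$, so the range is closed.

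\textbf{Index zero and conclusion.} The adjoint form $a^{*}(u,v) := \overline{a(v,u)}$ is again continuous and essentially coercive, and the argument above, applied to $a^{*}$, gives $\dim\ker\mathcal{A}^{*} < \infty$; thus $\mathcal{A}$ is Fredholm. For $\mathrm{ind}(\mathcal{A}) = 0$ I would exploit the characterisation proved in \cite{ace}: every essentially coercive form splits as $a = b + c$ with $b$ coercive and $c(u,v) = \langle Cu, v\rangle_V$ for some compact $C$ on $V$. Correspondingly, $\mathcal{A} = \mathcal{B} + K$ with $\mathcal{B}$ invertible by the Lax--Milgram lemma and $K$ compact, so $\mathrm{ind}(\mathcal{A}) = \mathrm{ind}(\mathcal{B}) = 0$. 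Combined with injectivity, this gives surjectivity.

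\textbf{Main obstacle.} The delicate step is the index-zero part. Without invoking the coercive-plus-compact decomposition of \cite{ace}, one would need either a Fredholm-preserving homotopy from $\mathcal{A}$ to an invertible operator, or a direct comparison of $\dim\ker\mathcal{A}$ and $\dim\ker\mathcal{A}^{*}$ using only the definition of essential coercivity; establishing the decomposition first is by far the cleanest route and does most of the work.
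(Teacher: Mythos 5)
The paper itself offers no proof of this proposition: it is imported wholesale from \cite[Corollary 4.5]{ace}. Your argument is correct, and its decisive step --- essential coercivity gives $\mathcal{A}=\mathcal{B}+K$ with $\mathcal{B}$ invertible and $K$ compact, hence $\mathrm{ind}(\mathcal{A})=0$, and the uniqueness hypothesis (injectivity) then forces bijectivity --- is precisely the mechanism behind the cited result: the coercive-plus-compact decomposition is \cite[Theorem 4.4]{ace}, which this paper also invokes (in the proof of Proposition~\ref{prop:53}). Two remarks. First, once you use that decomposition, the entire first half of your proof (finite-dimensional kernel, closed range, finiteness of the kernel of the adjoint form) becomes redundant: a compact perturbation of an invertible operator is already Fredholm of index zero, so those direct verifications only give an alternative proof of Fredholmness while still leaving the index unresolved --- and, as you concede, the index-zero step is exactly where the decomposition is needed, so the shortest correct write-up consists of the decomposition plus Lax--Milgram plus the index argument. (Your cokernel identification is fine, but note it silently uses reflexivity of $V$ and the closed range you just established to identify $V'/\overline{\mathrm{range}(\mathcal{A})}$ with the kernel of the operator of the adjoint form.) Second, a small but real gap in the last step: in this paper ``coercive'' means $|b(u,u)|\geq\alpha\|u\|_V^2$, not $\re b(u,u)\geq\alpha\|u\|_V^2$, so Lax--Milgram does not apply verbatim to $\mathcal{B}$; you must first rotate, using convexity of the numerical range (Proposition~\ref{prop:51}~a)), so that $e^{i\theta}b$ is positive-coercive, and then conclude that $\mathcal{B}$ is invertible. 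With these adjustments your proposal is sound and coincides in substance with the route taken in the cited reference.
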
	   
Now let $H$ be a second Hilbert space over $\K$ and $j\in {\mathcal L}(V,H)$ with dense range. Let $a:V\times V\to \K$ be a continuous 
sesquilinear form satisfying 
\begin{equation}\label{eq:propj}
\mbox{ if }u\in\ker j \mbox{ and }a(u,v)=0 \mbox{ for all }v\in\ker j\mbox{ then }u=0.
\end{equation}
Then there exists a unique operator $A$ on $H$ whose graph is given by 
\[ G(A)= \{  (x,f)\in H\times H:\exists u\in V,j(u)=x\mbox{ and } a(u,v)=\langle f,j(v)\rangle_H, v\in V   \}  \]
We call $A$ \emph{the operator on $H$ associated with $(a,j)$} and write $A\sim (a,j)$. 

This setting has been introduced in \cite{AtE12}, generalizing the common case where  $V\underset{d}{\hookrightarrow} H$ (i.e. $V$ embeds continuously in $H$ with dense range) for which $j$ is the identity mapping. See  also \cite{AtE} for an introduction and \cite{Sau,AtEKS14} for more information.  In general $A$ is not a closed operator. Nevertheless, if $a$ is essentially coercive, then we can prove quite strong spectral properties.

By 
\begin{equation*}\label{eq:im}
W(a,j):=\{   a(u,u):u\in V; \|j(u)\|_H=1\}
\end{equation*}  
we denote \emph{the numerical range} of $(a,j)$. For $\lambda\in\K$, we let 
\begin{equation*}\label{eq:pert}
a_{\lambda} (u,v)=a(u,v)-\lambda \langle j(u),j(v)\rangle_H . 
\end{equation*} 
Thus $a_\lambda:V\times V\to \K$ is a continuous sesquilinear form.
 
We first prove a lemma.
\begin{lem}\label{lem:perturb}
Let $\lambda\in\K\backslash \overline{W(a,j)}$, where $\overline{W(a,j)}$ denotes the closure of $W(a,j)$. If $a$ is essentially coercive then so is $a_\lambda$. 
\end{lem}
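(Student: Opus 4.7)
The plan is to verify the essential-coerciveness definition for $a_\lambda$ directly. Take $u_n \rightharpoonup 0$ in $V$ with $a_\lambda(u_n,u_n)\to 0$; since
\[ a(u_n,u_n)=a_\lambda(u_n,u_n)+\lambda\|j(u_n)\|_H^2, \]
the key is to control $\|j(u_n)\|_H$. I will argue by subsequence extraction, splitting into two cases according to whether $\|j(u_n)\|_H$ stays bounded away from $0$ or not.

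Suppose, for contradiction, that $\|u_n\|_V\not\to 0$. After extracting a subsequence, I may assume $\|u_n\|_V\geq\delta>0$. In the first case, if $\|j(u_n)\|_H\to 0$ along a further subsequence, then $a(u_n,u_n)=a_\lambda(u_n,u_n)+\lambda\|j(u_n)\|_H^2\to 0$, and the essential coerciveness of $a$ forces $\|u_n\|_V\to 0$ along that subsequence, contradicting $\|u_n\|_V\geq\delta$. Hence $\liminf_n\|j(u_n)\|_H>0$, and I may extract a further subsequence with $\|j(u_n)\|_H\geq c>0$.

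On this subsequence, set $v_n:=u_n/\|j(u_n)\|_H$, so that $\|j(v_n)\|_H=1$ and therefore $a(v_n,v_n)\in W(a,j)$. Dividing the identity above by $\|j(u_n)\|_H^2$ gives
\[ a(v_n,v_n)=\frac{a_\lambda(u_n,u_n)}{\|j(u_n)\|_H^2}+\lambda. \]
Since $a_\lambda(u_n,u_n)\to 0$ while $\|j(u_n)\|_H^2\geq c^2$, the first term goes to $0$, so $a(v_n,v_n)\to\lambda$. But then $\lambda$ lies in $\overline{W(a,j)}$, contradicting the hypothesis $\lambda\notin\overline{W(a,j)}$.

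I expect the main obstacle to be packaging the dichotomy cleanly — in particular, keeping straight that one does not need weak convergence of the normalized sequence $v_n$ (only membership of $a(v_n,v_n)$ in $W(a,j)$), while the essential coerciveness of $a$ is applied to the original sequence $u_n$, which does converge weakly to $0$. No further technical difficulty is expected, since the argument rests only on the algebraic identity relating $a_\lambda$ and $a$, the normalization trick, and the definition of $W(a,j)$.
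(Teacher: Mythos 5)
Your proposal is correct and rests on exactly the same ideas as the paper's proof: the normalization $v_n=u_n/\|j(u_n)\|_H$ together with the identity $a_\lambda(v_n,v_n)=a(v_n,v_n)-\lambda$ to rule out $\|j(u_n)\|_H$ staying bounded away from $0$, and essential coerciveness of $a$ applied via $a(u_n,u_n)=a_\lambda(u_n,u_n)+\lambda\|j(u_n)\|_H^2$. The only difference is organizational: the paper first proves $\|j(u_n)\|_H\to 0$ (by contradiction) and then applies essential coerciveness to the whole sequence, whereas you run one global contradiction with a subsequence dichotomy.
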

\begin{proof}
Let $u_n\rightharpoonup 0$ in $V$  and assume that $a_\lambda (u_n,u_n)\to 0$ as $n\to\infty$. Then $\|j(u_n)\|_H\to 0$. Indeed, if not, there exist a subsequence $(u_{n_k} )_k$ and $\delta>0$ such that $\|j(u_{n_k})\|_H\geq \delta$ ($k\in\N$). Let $w_k=u_{n_k}/\|j(u_{n_k})\|_H$.  Then 
\[a_\lambda (w_k,w_k)=\frac{1}{\|j(u_{n_k})\|_H^2}a_\lambda (u_{n_k},u_{n_k})\to 0\mbox{ as }n\to\infty.\]
But $a_\lambda(w_k,w_k)=a(w_k,w_k)-\lambda$ and $a(w_k,w_k)\in W(a,j)$. This contradicts the fact that $\lambda\not\in \overline{W(a,j)}$. 

It follows that $a(u_n,u_n)=a_\lambda (u_n,u_n) +\lambda \|j(u_n)\|_H^2\to 0$ as $n\to\infty$.  Now since $a$ is essentially coercive, we get $\lim_{n\to\infty}\|u_n\|_V=0$. 
\end{proof}	
We denote by $\rho(A)$ the resolvent set of $A$ and for $\mu\in \rho(A)$ by $R(\mu,A)=(\mu\Id -A)^{-1}$ the resolvent at $\mu$. Moreover $\sigma (A):=\K\backslash \rho (A)$ is the spectrum of $A$. The following is the main result of this section.  
\begin{thm}\label{th:new23}
	Assume that there exists $\lambda\in\K$ such that $a_\lambda$ is essentially coercive. Then 
	\[\sigma(A)\subset \overline{W(a,j)}\mbox{ and }\|R(\mu, A)\|\leq \frac{1}{\dist (\mu, W(a,j))}\mbox{ for all }\mu\not\in \overline{W(a,j)}.\] 
\end{thm}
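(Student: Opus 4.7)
The plan is to read the conclusion as a Lax--Milgram-type existence/uniqueness statement for the perturbed form $a_\mu$, supplemented by a straightforward energy estimate. Unwinding the definition of $A\sim(a,j)$ shows that $(\mu\Id - A)x=g$ is equivalent to the variational problem of finding $u\in V$ with $j(u)=x$ and $a_\mu(u,v)=-\langle g, j(v)\rangle_H$ for all $v\in V$. So I would apply Proposition~\ref{propace:2.1} to $a_\mu$ in order to produce such a $u$ for each $g\in H$, and then test with $v=u$ to obtain the resolvent bound.

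The first task is to upgrade the hypothesis: I need $a_\mu$ to be essentially coercive for \emph{every} $\mu\notin\overline{W(a,j)}$, not only for the specific $\lambda$ assumed in the statement. This follows from the identity $a_\mu=(a_\lambda)_{\mu-\lambda}$ together with $W(a_\lambda,j)=W(a,j)-\lambda$, which give $\mu-\lambda\notin\overline{W(a_\lambda,j)}$ exactly when $\mu\notin\overline{W(a,j)}$; Lemma~\ref{lem:perturb} applied to $a_\lambda$ then delivers essential coercivity of $a_\mu$. This upgrade is the main conceptual obstacle, since it is what promotes a one-point hypothesis into the full spectral inclusion.

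Next I would check the uniqueness condition \eqref{eq:uni} for $a_\mu$. If $a_\mu(u,v)=0$ for all $v\in V$, then $v=u$ gives $a(u,u)=\mu\|j(u)\|_H^2$; were $j(u)\neq 0$, normalization would place $\mu\in W(a,j)$, contradicting $\mu\notin\overline{W(a,j)}$. Hence $j(u)=0$, whence $a(u,v)=a_\mu(u,v)=0$ for all $v\in V$, and \eqref{eq:propj} yields $u=0$. Proposition~\ref{propace:2.1} applied to $a_\mu$ with the bounded functional $v\mapsto-\langle g, j(v)\rangle_H\in V'$ then produces a unique $u\in V$ solving the variational equation; this gives surjectivity of $\mu\Id - A$, while the same uniqueness argument gives injectivity. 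Thus $\mu\in\rho(A)$ with $R(\mu,A)g=j(u)$.

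Finally, for the resolvent bound I would test the variational equation with $v=u$, yielding $a_\mu(u,u)=-\langle g, R(\mu,A)g\rangle_H$. When $j(u)\neq 0$, setting $w:=u/\|j(u)\|_H$ gives $a(w,w)\in W(a,j)$ and $a_\mu(u,u)=\|R(\mu,A)g\|_H^2\bigl(a(w,w)-\mu\bigr)$, so $|a_\mu(u,u)|\geq\|R(\mu,A)g\|_H^2\,\dist(\mu,W(a,j))$; combined with $|\langle g, R(\mu,A)g\rangle_H|\leq\|g\|_H\|R(\mu,A)g\|_H$ this delivers the claimed estimate. The degenerate case $j(u)=0$ forces $R(\mu,A)g=0$ and is trivial.
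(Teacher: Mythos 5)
Your proposal is correct and follows essentially the same route as the paper: Lemma~\ref{lem:perturb} plus the identity $a_\mu=(a_\lambda)_{\mu-\lambda}$ and $W(a_\lambda,j)=W(a,j)-\lambda$ to get essential coercivity of $a_\mu$, verification of the uniqueness condition \eqref{eq:uni} via \eqref{eq:propj}, solvability from Proposition~\ref{propace:2.1}, and the same numerical-range estimate for the resolvent bound. The only difference is organizational: the paper reduces the general $\lambda$ to the case $\lambda=0$ by passing to the shifted operator $A-\lambda\Id$, whereas you absorb the shift directly into the form, which is a harmless streamlining.
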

\begin{proof}
a) Assume that $\lambda=0$. Let $\mu\in \K\backslash \overline{W(a,j)}$. Then $a_\mu$ is essentially coercive by Lemma~\ref{lem:perturb}. We show that $a_\mu$ satisfies the uniqueness condition (\ref{eq:uni}). Let $u\in V$ such that 
\[a_\mu(u,v)=a(u,v)-\mu \langle j(u),j(v)\rangle_H=0\mbox{ for all }v\in V.\]
If $j(u)\neq 0$, then $a(w,w)-\mu=0$ where $w=\frac{u}{\|j(u)\|_H}$. Thus $\mu\in W(a,j)$, in contradiction with the assumption. Thus $j(u)=0$ and it follows from (\ref{eq:propj}) that $u=0$.   

Let $f\in H$. By Proposition~\ref{propace:2.1} there exists a unique $u\in V$ such that 
\[ a_\mu(u,v)=\langle -f,j(v)\rangle_H \mbox{ for all }v\in V.   \]
It follows that 
\[  a(u,v)=\langle -f +\mu j(u),j(v)  \rangle_H  \mbox{ for all }v\in V. \]
Thus $x:=j(u)\in D(A)$ and $Ax=-f+\mu x$. Moreover, if $f\neq 0$, \[a(w,w)-\mu=\langle -f,\frac{j(w)}{\|j (u)\|_H}\rangle_H,\mbox{ with }  w:=\frac{u}{\|j(u)\|_H}. \] 
It follows that 
\[ \dist (\mu, W(a,j))\leq |\langle f,j(w)\rangle_H|\frac{1}{\|j(u)\|_H}\leq \frac{ \|f\|_H }{\|j(u)\|_H}.      \]
Hence \[\|x\|_H=\|j(u)\|_H\leq \frac{1}{\dist (\mu, W(a,j))}\|f\|_H.\]
\noindent b) Let $\lambda\in\K\backslash \{0\}$ such that $a_\lambda$ is essentially coercive. Let $\mu\in \K\backslash \overline{W(a,j)}$. Observe that 
\[  \overline{W(a_\lambda ,j)} =\overline{W(a ,j)}-\lambda.   \]
Thus $\mu-\lambda\not\in \overline{W(a_\lambda ,j)}$. Note also that $(a_\lambda,j)\sim A-\lambda\Id$.    	It follows from a) that 
$ (\mu-\lambda)\Id -(A-\lambda\Id) =\mu\Id-A $ is invertible  and 
\begin{eqnarray*}
	 \| (\mu\Id-A)^{-1}\|=\| ((\mu-\lambda)\Id-(A-\lambda \Id))^{-1}\| & \leq &  \frac{1}{\dist (\mu-\lambda, W(a_\lambda,j))}\\
	  & = & \frac{1}{\dist(\mu,W(a,j))}. 
\end{eqnarray*}	  
\end{proof}
We mention that Theorem~\ref{th:new23} fails in general if the form is not essentially coercive. Here is an example which is well-known (see \cite[Ex. 5.3]{tubingen} for more information). 
\begin{exam}
	Let $H=L^2(0,\infty)$, $V=H^1_0(0,\infty)$, $j$ the identity and $a(u,v)=\int_0^\infty u'\overline{v}$.  Then $W(a,j)\subset i\R$ as can easily be seen by integration by parts. Here the associated  operator $A$ on $H$ is given by $D(A)=V$, $Af=-f'$. Thus $-A$ is the generator of the right shift semigroup and its spectrum is $\sigma(A)=\{ \lambda\in \C:\re (\lambda)\geq 0\}$.     
\end{exam}  
\begin{rem}
	Given a closed operator on $H$ we always might choose $V=D(A)$ with the graph norm $\|u\|^2_A=\|u\|_H^2 +\|Au\|_H^2$ and define $a(u,v)=\langle Au,v\rangle$. If $a$ is essentially coercive, then \[\sigma(A)\subset \overline{W(a)}:=\overline{\{  \langle Au,u\rangle_H:u\in D(A)\}}.\]
	But this $a$ is rarely essentially coercive. In fact, if the injection $V\hookrightarrow H$ is compact, then $a$ is essentially coercive if and only if $\dim V<\infty$.    
\begin{proof}
Let $u_n\ \rightharpoonup 0$  in $V$. Then $Au_n\rightharpoonup 0$ in $H$ by the definition of the norm in $V$. Moreover, $u_n\to 0$ in $H$ since the embbeding is compact. It follows that $\langle Au_n,u_n\rangle \to 0$. Since $a$ is essentially coercive we conclude that $\|u_n\|_V\to 0$. In other words, we have shown that each weakly convergent sequence in $V$ is norm convergent and thus $\dim V<\infty$.  
\end{proof}	
But in this situation, it may well happen that 
\[\sigma(A)\subset \overline{\{  \langle Au,u\rangle_H : u\in D(A),\|u\|_H=1  \}}\]
 (for example if $A$ is selfadjoint with compact resolvent). Thus the essential coercivity is not a necessary condition in Theorem~\ref{th:new23}. 
\end{rem}
Next we want to use Theorem~\ref{th:new23} to prove several generation theorems. The first concerns selfadjoint operators, the second contraction semigroups and the third holomorphic semigroups. The point is that in the usual versions of the Lumer-Philipps Theorem, Theorem~\ref{th:new23} allows us to replace the range condition by essential coercivity. 

 \section{Selfadjoint, m-accretive and m-quasi-sectorial operators}\label{sec:3}
\subsection{Selfadjoint  operators}
Let $H$ be a Hilbert space over $\C$. An operator $A$ on $H$ is called \emph{symmetric} if 
\[  \langle Ax,y\rangle =\langle x,Ay\rangle \mbox{ for all }x,y\in D(A).  \]
An operator $A$ is \emph{selfadjoint} if it is densely defined and $A=A^*$. By a well-known criterion, 
\begin{center}
$A$ is selfadjoint if and only if it is symmetric and $\pm i\Id-A:D(A)\to H$ are surjective. 
\end{center}  
Now we consider a Hilbert space $V$ and a mapping $j\in {\mathcal L}(V,H)$ with dense range. Let $a:V\times V\to \C$ be a continuous sesquilinear form satisfying  (\ref{eq:propj}). Let $A\sim (a,j)$ and $a_\lambda (u,v):=a(u,v)-\lambda \langle j(u),j(v)\rangle_H$ ($\lambda\in\C$) as in Section~\ref{sec:2}.
\begin{thm}\label{th:new26}
	 Assume that $a$ is symmetric. If $a_\lambda$ is essentially coercive for some $\lambda\in\C$, then $A$ is selfadjoint.   
\end{thm}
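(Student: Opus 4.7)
The plan is to verify the three ingredients for selfadjointness quoted just before the theorem: $A$ is symmetric, $A$ is densely defined, and both $\pm iI-A:D(A)\to H$ are surjective. Two of these will fall out of Theorem~\ref{th:new23} applied to $a$, whose numerical range lies in $\R$ by symmetry; proving the dense domain is the real work.

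First I would observe that since $a$ is symmetric, $a(u,u)=\overline{a(u,u)}\in\R$ for every $u\in V$, so $W(a,j)\subset\R$. Theorem~\ref{th:new23} then gives $\sigma(A)\subset\overline{W(a,j)}\subset\R$; in particular $\pm i\in\rho(A)$, so the operators $\pm iI-A:D(A)\to H$ are bijective, which takes care of the surjectivity requirement.

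Next I would check that $A$ is symmetric directly from the symmetry of $a$. For $x,y\in D(A)$ choose $u,u'\in V$ with $j(u)=x$, $j(u')=y$, $a(u,v)=\langle Ax,j(v)\rangle_H$ and $a(u',v)=\langle Ay,j(v)\rangle_H$ for every $v\in V$. Evaluating at $v=u'$ and $v=u$ one reads off
\[\langle Ax,y\rangle_H=a(u,u')=\overline{a(u',u)}=\overline{\langle Ay,x\rangle_H}=\langle x,Ay\rangle_H.\]

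The step I expect to be the main obstacle is the density of $D(A)$, since the generalised form framework with $j$ does not make it automatic. I would derive it from the symmetry of $A$ together with the already-established surjectivity of $-iI-A$: given $z\in D(A)^\perp$, pick $y\in D(A)$ with $z=-iy-Ay$ and pair with $y$. Since $\langle Ay,y\rangle_H\in\R$ by symmetry,
\[0=\langle z,y\rangle_H=-i\|y\|_H^2-\langle Ay,y\rangle_H,\]
whose imaginary part forces $\|y\|_H^2=0$, hence $y=0$ and so $z=0$. Combining density, symmetry, and surjectivity of $\pm iI-A$ with the criterion recalled before the theorem then yields $A=A^*$.
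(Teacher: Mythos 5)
Your argument is correct and follows essentially the same route as the paper: symmetry of $A$ from the symmetry of $a$, the inclusion $\sigma(A)\subset\overline{W(a,j)}\subset\R$ from Theorem~\ref{th:new23} giving $\pm i\in\rho(A)$, and then the quoted criterion. The only difference is that you spell out the density of $D(A)$ (the standard computation hidden inside that criterion), which the paper leaves implicit.
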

\begin{proof}
It follows from the definition that $A$ is symmetric. By Theorem~\ref{th:new23}, $\sigma(A)\subset \overline{W(a,j)}\subset\R$. This implies that $A$ is selfadjoint.  	
\end{proof}	
We will show in Section~\ref{sec:8} that the operator $A$ in Theorem~\ref{th:new26} is semibounded if we assume in addition that $j$ is injective. In that case, each semibounded selfadjoint operators is obtained in that way.

We give an example to illustrate how Theorem~\ref{th:new26} can be used. It is convenient (even though surprising) that only one complex number, for instance $\lambda=i$, suffices to check the essential coercivity. 
\begin{exam}
	Let $H=L^2(\R^d)$, $d\geq 3$, $m\in L^d(\R^d)$ real-valued, $V=H^1(\R^d)$ and $j:V\to H$ the identity. Then 
	\[   a(u,v)=\int \nabla u\overline{\nabla v} +\int mu\overline{v}   \]
	defines a continuous, symmetric sesquilinear form on $V\times V$. In fact, by Sobolev-embedding, $H^1(\R^d)\subset L^{2d/(d-2)}(\R^d)$. Since $m\in L^d(\R^d)$, it follows that 
	\begin{equation}\label{eq:new25}
	mL^{2d/(d-2)}(\R^d)\subset L^2(\R^d).
	\end{equation}    
We show that 
\[   a_i(u,v):=a(u,v)-i\langle u,v\rangle_H \]
is essentially coercive. In fact, consider a sequence $(u_n)_n$ such that  $u_n\rightharpoonup 0$ in $H^1(\R^d)$ and $a_i(u_n,u_n)\to 0$. Then $\|u_n\|^2_{L^2}=-\Im a_i(u_n)\to 0$ as $n\to \infty$. Since $u_n\rightharpoonup 0$ in $H^1(\R^d)$, by (\ref{eq:new25}), $mu_n\rightharpoonup 0$ in $L^2(\R^d )$. Moreover, since $u_n\to 0$ in $L^2(\R^d)$, this implies that 
\[\langle mu_n,u_n\rangle_H=\int m|u_n|^2 \to 0\mbox{ as }n\to\infty.\]
Consequently
\[  \int|\nabla u_n|^2 = \re a_i(u_n)-\int m|u_n|^2\to 0\mbox{ as }n\to\infty.   \]
We have shown that $\|u_n\|_{H^1}\to 0$ and thus $a_i$ is essentially coercive. It follows from Theorem~\ref{th:new26} that the operator $A$ associated with $(a,j)$ on $H$ is selfadjoint. Using    (\ref{eq:new25}) one sees that 
\[D(A)=H^2(\R^d) \mbox{ and }Au=\Delta u+mu\mbox{ for all }u\in D(A).\] 	
\end{exam}
We next present an example where $j$ is not injective.  In fact, we consider the Dirichlet-to-Neumann operator associated to the problem 
\[   -\Delta u +mu\]
where $m$ is a measurable function. This was the prototype example in \cite{AtEKS14}, where $m$ had been chosen bounded. Here we allow more general $m$ to which Theorem~\ref{th:new26} can be applied conveniently. 
\begin{exam}\label{ex:3.4}{(Dirichlet to Neumann operator)}
Let $\Omega\subset\R^d$ be a bounded, open Lipschitz domain, $d\geq 3$, $V=H^1(\Omega)$, $m\in L^d(\Omega)$. Then 
\[   a(u,v)=\int_{\Omega}  \nabla u\overline{\nabla v} +\int_{\Omega} mu\overline{v}    \]
defines a continuous, symmetric form on $V$. We claim that $a$ is essentially coercive.
\begin{proof}
Let $u_n\hookrightarrow 0$ in $H^1(\Omega)$ such that $a(u_n,u_n)\to 0$. Since $H^1(\Omega)\hookrightarrow L^{2d/(d-2)}$, there exists $c\geq 0$ such that $\|u_n\|_{L^{2d/(d-2)}}\leq c$ for all $n\in\N$. 
	
Since the embedding $H^1(\Omega)\hookrightarrow L^2(\Omega) $ is compact, $u_n\to 0$ in $L^2(\Omega)$ as $n\to\infty$.  Thus 
\begin{eqnarray*}
\left|    \int_{\Omega} m|u_n|^2 \right| & \leq & \|u_n\|_{L^2(\Omega)}\left( \int_{\Omega}|m|^2|u_n|^2\right)^{1/2}\\
 & \leq & \|u_n\|_{L^2(\Omega)} \left( \int_{\Omega}|m|^{2\frac{d}{2}} \right)^{\frac{2}{d}\frac{1}{2}}  	 \left( \int_{\Omega}|m|^{2\frac{d}{d-2}} \right)^{\frac{d-2}{d}\frac{1}{2}}  \\
  & \leq &  \|u_n\|_{L^2(\Omega)}  \|m\|_{L^d(\Omega)} c\to 0
\end{eqnarray*}	  
as $n\to\infty$. 
Hence $\int_{\Omega}|\nabla u_n|^2\to 0$ as $n\to\infty$. 
We have shown that $\|u_n\|_{H^1(\Omega)}\to 0$ as $n\to\infty$.  
\end{proof} 
Now let $H=L^2(\partial\Omega)$ and let
\[j=\mathop{tr}:H^1(\Omega)\to L^2(\partial \Omega)\]
be the trace operator. Denote by $A$ the operator associated with $(a,j)$ on $L^2(\partial \Omega)$.Then $A$ is selfadjoint by Theorem~\ref{th:new26}. 

We claim that 
\begin{eqnarray*}
	G(A) & = & \{  (g,h)\in L^2(\partial\Omega)\times L^2(\partial \Omega):\exists u\in H^1(\Omega),\\
	 &  &  -\Delta u+mu=0,\mathop{tr} u=g, \partial_\nu u=g \}.
\end{eqnarray*}	 
\begin{proof}
$\subset$ Let $(g,h)\in G(A)$. By definition there exists $u\in H^1(\Omega)$ such that $\mathop{tr} u=g$ and \[a(u,v)=\int_{\partial\Omega} h\overline{\mathop{tr}(v)}\]
 for all $v\in H^1(\Omega)$. Choosing $v\in {\mathcal D}(\Omega)$, one sees that $-\Delta u+m u=0$. Thus 
 \[  \int_{\partial\Omega} \Delta u \overline{v} + \int_{\partial\Omega}\nabla u\overline{\nabla v}=a(u,v)=\int_{\partial\Omega} h\overline{\mathop{tr}(v)}  \]
 for all $v\in H^1(\Omega)$. This means by definition that $\partial_\nu u=h$ in the weak sense.\\
 $\supset$ The proof is similar.     	
\end{proof}	
\end{exam}

The characterization of  selfadjoint  operators which are associated with an essentially coercive form will be given in Section~\ref{sec:6} (see Corollary~\ref{cor:6.6} and \ref{cor:6.5}).

\subsection{m-accretive operators}\label{sec:new4}
Let $\K=\R$ or $\C$ and let $H$ be a Hilbert space. An operator $A$ on $H$ is called \emph{accretive} if $\re \langle Au,u\rangle_H\geq 0$ for all $u\in D(A)$. The operator $A$ is called \emph{m-accretive} if it is accretive and $\Id +A:D(A)\to H$ is surjective. By the Lumer-Philipps Theorem $A$ is m-accretive  if and only if $-A$ generates  contractive $C_0$-semigroup on $A$. Let $V$ be a Hilbert space over $\K$ and $j\in {\mathcal L}(V,H)$ with dense range. Let $a:V\times V\to \K$ be a continuous sesquilinear form satisfying  (\ref{eq:propj}). We say that $a$ is \emph{accretive} if 
\[\re a(u,u) \geq 0 \mbox{ for all }u\in V. \] 
Denote by $A$ the operator on $H$ associated with $(a,j)$. 
\begin{thm}\label{th:new29}
	Assume that $a$ is accretive and $a_\lambda$ is essentially coercive for some $\lambda\in\K$. Then $A$ is m-accretive. 
\end{thm}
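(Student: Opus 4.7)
The strategy is to verify the two requirements of m-accretivity separately: that $A$ is accretive, and that $\Id + A : D(A) \to H$ is surjective. The first will follow directly from unfolding the definition of $A \sim (a,j)$, and the second will drop out of Theorem~\ref{th:new23} once we locate the closed numerical range in the complex plane.

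For accretivity of $A$, I would take $x \in D(A)$ with $Ax = f$ and use the definition of the associated operator: there exists $u \in V$ with $j(u) = x$ and $a(u,v) = \langle f, j(v)\rangle_H$ for every $v \in V$. Choosing $v = u$ yields $a(u,u) = \langle Ax, x\rangle_H$, so $\re \langle Ax, x\rangle_H = \re a(u,u) \geq 0$ by the accretivity hypothesis on $a$. For the range condition, the same hypothesis $\re a(u,u) \geq 0$ on $V$ immediately gives $W(a,j) \subset \{z \in \C : \re z \geq 0\}$, and hence $\overline{W(a,j)}$ lies entirely in the closed right half-plane. Since $a_\lambda$ is essentially coercive for some $\lambda$, Theorem~\ref{th:new23} then delivers $\sigma(A) \subset \overline{W(a,j)} \subset \{z : \re z \geq 0\}$. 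In particular $-1 \in \rho(A)$, so $\Id + A : D(A) \to H$ is bijective, and a fortiori surjective.

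The only conceptual point worth naming as an ``obstacle'' is already absorbed into Theorem~\ref{th:new23}: essential coercivity of a single shifted form $a_\lambda$ is what supplies the missing range condition, bypassing the usual Lax--Milgram route. Once that input is accepted, the argument above is essentially a two-line combination of accretivity with the spectral inclusion, with no further calculations to perform. If one additionally wanted to invoke Lumer--Phillips to promote this to generation of a contraction $C_0$-semigroup, one would need density of $D(A)$ in $H$, but that is not part of the m-accretivity assertion as formulated in Theorem~\ref{th:new29}.
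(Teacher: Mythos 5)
Your proof is correct and follows essentially the same path as the paper's: accretivity of $A$ by unfolding the definition of $A\sim(a,j)$ (the paper simply states "it follows from the definition"), and then Theorem~\ref{th:new23} together with $\overline{W(a,j)}\subset\{\re z\geq 0\}$ to get $-1\in\rho(A)$. The only difference is that you spell out the computation $\langle Ax,x\rangle_H=a(u,u)$ explicitly, which the paper leaves implicit.
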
 
\begin{proof}
 It follows from the definition of the operator associated with $(a,j)$ that $A$ is accretive. By Theorem~\ref{th:new23}, 
 \[\sigma(A)\subset \overline{W(a,j)}	\subset \{\lambda\in\C:\re(\lambda)\geq 0\}.\]
 Thus $-1\not\in \sigma(A)$. 
\end{proof} 
\subsection{m-sectorial operators}
Let $V,H$ be complex Hilbert spaces and let $j\in{\mathcal L}(V,H)$ have dense range. Let $a:V\times V\to\C$ be a continuous sesquilinear form satisfying  (\ref{eq:propj}). We say that $a$ is \emph{$j$-sectorial} if there exist $\theta\in\R$ and $w\in\R$ such that 
\begin{equation}\label{eq:new23}
a(u,u)+w\|j(u)\|_H^2\in\Sigma_\theta\mbox{ for all }u\in V,
\end{equation}    
where $\Sigma_\theta :=\{ re^{i\alpha} :r>0,|\alpha|<\theta  \} $. 	
\begin{thm}\label{th:new210}
Assume that 
\begin{itemize}
	\item[a)] $a$ is $j$-sectorial;
	\item[b)] $a_\lambda$ is essentially coercive for some $\lambda \in\C$.
\end{itemize}
 Then the operator $A$ associated with $(a,j)$ on $H$ is m-sectorial.   
\end{thm}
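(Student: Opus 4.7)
The plan is to verify the three defining properties of an m-sectorial operator: $W(A)$ sits in a shifted sector $-w+\overline{\Sigma_\theta}$; $\sigma(A)$ sits in the same closed shifted sector with the standard sectorial resolvent bound; and $A$ is closed and densely defined. The $j$-sectoriality supplies the numerical range inclusion directly, Theorem~\ref{th:new23} supplies the spectral data, and Theorem~\ref{th:new29} applied to a translate of $a$ supplies the dense domain.

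First I would prove $W(A) \subset W(a,j)$ as follows: take $x \in D(A)$ with $\|x\|_H = 1$; by the definition of $A \sim (a,j)$ there exists $u \in V$ with $j(u) = x$ and $a(u,v) = \langle Ax, j(v)\rangle_H$ for all $v \in V$; setting $v = u$ gives $\langle Ax, x\rangle_H = a(u,u) \in W(a,j)$ because $\|j(u)\|_H = 1$. The $j$-sectoriality hypothesis then yields
\[W(A) \subset W(a,j) \subset -w + \Sigma_\theta.\]

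Next I would invoke Theorem~\ref{th:new23}, applicable because $a_\lambda$ is essentially coercive for some $\lambda \in \C$. It gives $\sigma(A) \subset \overline{W(a,j)} \subset -w + \overline{\Sigma_\theta}$ together with
\[\|R(\mu,A)\| \leq \frac{1}{\dist(\mu, W(a,j))} \leq \frac{1}{\dist(\mu, -w + \Sigma_\theta)}\]
for every $\mu$ outside $-w + \overline{\Sigma_\theta}$. This is exactly the sectorial resolvent estimate. Closedness of $A$ is then automatic, since the resolvent set is nonempty and the resolvent is a bounded operator everywhere defined on $H$.

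For density of $D(A)$ I would apply Theorem~\ref{th:new29} to the shifted form $b := a_{-w}$. One checks easily that $W(b,j) = W(a,j) + w \subset \Sigma_\theta \subset \{z : \re z \geq 0\}$, so $b$ is accretive, and that $b_{\lambda+w}(u,v) = a_\lambda(u,v)$, so $b_{\lambda+w}$ is essentially coercive. Theorem~\ref{th:new29} then asserts that the operator associated with $(b,j)$, namely $A + w\Id$, is m-accretive; hence it is densely defined, and so is $A$.

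The step I expect to require the most care is making sure that the bound from Theorem~\ref{th:new23} is strong enough to yield sectoriality and not merely accretivity. However, the estimate $1/\dist(\mu, W(a,j))$ is already sharp in the sectorial situation, precisely because $W(a,j)$ itself sits in a shifted sector; no finer computation is needed, and the argument reduces to a clean translation between the form picture and the operator picture.
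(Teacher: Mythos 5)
Your proof is correct, but it takes a genuinely different route from the paper's. The paper first absorbs the shift (reducing to $w=0$ in \eqref{eq:new23}), then rotates: with $\theta'=\tfrac{\pi}{2}-\theta$ it applies Theorem~\ref{th:new29} to the forms $e^{\pm i\theta'}a$ (whose shifted versions remain essentially coercive), obtaining that $-e^{\pm i\theta'}A$ generate contractive $C_0$-semigroups, and then invokes the classical boundary-semigroup characterization of holomorphic semigroups to conclude that $-A$ generates a holomorphic semigroup contractive on $\Sigma_{\theta'}$, which is the paper's working definition of m-sectoriality. You instead verify Kato's numerical-range definition directly: $W(A)\subset W(a,j)\subset -w+\Sigma_\theta$, the spectral inclusion and resolvent bound from Theorem~\ref{th:new23}, closedness from the nonemptiness of the resolvent set, and density of $D(A)$ from Theorem~\ref{th:new29} applied to $a_{-w}$ (i.e.\ to $A+w\Id$). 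Both arguments rest on the same two internal results (Theorems~\ref{th:new23} and \ref{th:new29}) plus one classical external fact: yours needs the equivalence of Kato's definition with the generation of a quasi-contractive holomorphic semigroup on a sector (the form in which the paper states m-sectoriality), while the paper's needs the theorem on boundary values of holomorphic semigroups; these are comparable inputs, and your route has the small advantage of producing the explicit resolvent estimate $\|R(\mu,A)\|\leq 1/\dist(\mu,-w+\Sigma_\theta)$ along the way. Two minor points to tidy up: accretivity of $b=a_{-w}$ should be read off from \eqref{eq:new23} for \emph{all} $u\in V$ (the numerical range $W(b,j)$ only sees $u$ with $\|j(u)\|_H=1$, so it misses $\ker j$; the hypothesis covers this directly), and the step ``m-accretive implies densely defined'' is the standard Hilbert-space fact that an accretive operator with $\Id+A$ surjective has dense domain — the same fact the paper uses implicitly through its statement of the Lumer--Phillips theorem, so it is legitimate but worth a sentence.
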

\begin{proof}
Replacing $a(u,v)$ by $a(u,v)+w\langle j(u),j(v)\rangle_H$ for $u,v\in V$, we may assume that $w=0$ in (\ref{eq:new23}). Let $\theta'=\frac{\pi}{2}-\theta$. Then $\re e^{\pm i\theta'}a(u,u)\geq 0$ for all $u\in V$. It follows from Theorem~\ref{th:new29} that $-e^{\pm i \theta'}A$ is the  generator of a contractive $C_0$-semigroup. This implies that $-A$ generates a holomorphic $C_0$-semigroup which is contractive on $\Sigma_{\theta'}$. 
\end{proof}
\section{Structure theorems for essential coerciveness }\label{sec:5} 
Throughout this section $V$ is a  complex  Hilbert space of infinite dimension and $a:V\times V\to \C$ is a continuous sesquilinear form. \\

Recall that $a$ is called \emph{coercive} if there exists $\alpha>0$ such that 
\begin{equation*}\label{eq:51}
| (a(u,u))|\geq \alpha \|u\|_V^2\mbox{  for all }u\in V. 
\end{equation*}   
We call  $a$  \emph{real-coercive} if there exists $\alpha>0$ such that 
\begin{equation*}\label{eq:52}
|\re (a(u,u))|\geq \alpha \|u\|_V^2\mbox{  for all }u\in V 
\end{equation*}    
and \emph{positive-coercive} if there exists $\alpha>0$ such that 
\begin{equation*}\label{eq:53}
\re (a(u,u))\geq \alpha \|u\|_V^2\mbox{  for all }u\in V. 
\end{equation*}  
Recall that the numerical range of $a$ (with respect to  $V$) 
\[   W(a,V):=\{   a(u,u): \|u\|_V=1\}\]
is convex, see \cite{gusta}. This implies the following. 
\begin{prop}\label{prop:51}
	a) The form $a$ is coercive if and only if there exists $\theta\in\R$ such that $e^{i\theta} a $ is positive-coercive.\\
	b) The form $a$ is real-coercive if and only if $a$ or $-a$ is positive-coercive.   
\end{prop}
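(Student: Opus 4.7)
Both ``if'' directions are immediate from the pointwise inequalities $|z|\geq\re(e^{i\theta}z)$ (for part a) and $|\re z|\geq\pm\re z$ (for part b), so the real content lies in the converses. For both, I would exploit the convexity of $W(a,V)\subset\C$ (which is cited just above the statement) together with the homogeneity identity $a(u,u)/\|u\|_V^2\in W(a,V)$ valid for every $u\neq 0$.

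For the converse of (a), coerciveness gives $W(a,V)\subset\{z\in\C:|z|\geq\alpha\}$, so $\overline{W(a,V)}$ is a closed convex subset of $\C$ disjoint from $\{0\}$. I would then invoke the strict Hahn-Banach separation theorem for the compact set $\{0\}$ and the closed convex set $\overline{W(a,V)}$ in $\R^2\cong\C$, producing $w\in\C\setminus\{0\}$ and $c>0$ with $\re(\bar w z)\geq c$ for every $z\in\overline{W(a,V)}$. Setting $e^{i\theta}:=\bar w/|w|$, the homogeneity identity yields $\re(e^{i\theta}a(u,u))\geq (c/|w|)\|u\|_V^2$ for all $u\in V$, i.e.\ $e^{i\theta}a$ is positive-coercive.

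For the converse of (b), real-coerciveness shows that $W(a,V)$ avoids the open strip $\{z\in\C:|\re z|<\alpha\}$, so it is contained in the disconnected closed set $\{\re z\geq\alpha\}\cup\{\re z\leq -\alpha\}$. Since $W(a,V)$ is convex, hence connected, it must lie entirely in one of the two closed half-planes. In the first case homogeneity gives $\re a(u,u)\geq\alpha\|u\|_V^2$, so $a$ is positive-coercive; in the second case the same argument applied to $-a$ shows that $-a$ is positive-coercive.

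The only non-mechanical step is the converse of (a): everything hinges on producing a \emph{quantitative} separation, and the strict version of Hahn-Banach (valid precisely because $\{0\}$ is compact and $\overline{W(a,V)}$ is closed) delivers the constant $c>0$ needed for positive-coerciveness. A weaker non-strict separation, or a purely topological argument, would only yield accretivity of a rotation of $a$, not coerciveness with a positive constant.
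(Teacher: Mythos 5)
Your proposal is correct and follows essentially the route the paper intends: the paper gives no written proof, merely citing the convexity of $W(a,V)$ (Toeplitz--Hausdorff) and leaving the deduction to the reader, and your argument --- strict separation of $\{0\}$ from the closed convex set $\overline{W(a,V)}$ for a), and connectedness of the convex set avoiding the strip $\{|\re z|<\alpha\}$ for b), combined with homogeneity --- is exactly the standard way to fill in that deduction.
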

Now, as in the coercive case (see Section~\ref{sec:2}) we want to introduce topological properties which are weaker than real-coercive or positive-coercive. 
\begin{defn}
	a)  The form $a$ is called  \emph{essentially real-coercive} if 
	\[u_n\rightharpoonup 0\mbox{  in }V \mbox{ and }\re a(u_n,u_n)\to 0\mbox{ implies }\|u_n\|_V\to 0.\] 
	b) The form $a$ is  called  \emph{essentially positive-coercive} if 
	\[u_n\rightharpoonup 0\mbox{  in }V \mbox{ and }\limsup_{n\to\infty} a(u_n,u_n)\leq 0\mbox{ implies }\|u_n\|_V\to 0.\]  	
\end{defn}  
It is obvious that 
\[\mbox{ess. positive-coercive }\Rightarrow\mbox{ ess. real-coercive } \Rightarrow\mbox{ ess. coercive, }   \]
where ess. is the abbreviation of essentially. \\

For the following  proof of the essential analogue of 
Proposition~\ref{prop:51} a) we use a result from \cite{ace}. 
\begin{prop}\label{prop:53}
	The form $a$ is essentially coercive if and only if there exists $\theta\in\R$ such that $e^{i\theta}a$ is essentially positive-coercive.  
\end{prop}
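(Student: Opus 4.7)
The forward direction is routine. Essentially positive-coercive implies essentially coercive via the chain of implications (essentially positive-coercive $\Rightarrow$ essentially real-coercive $\Rightarrow$ essentially coercive) noted right after Definition~4.2, and essential coercivity is invariant under multiplication by a unimodular scalar, since $|e^{i\theta}a(u_n,u_n)|=|a(u_n,u_n)|$ and therefore $a(u_n,u_n)\to 0$ if and only if $e^{i\theta}a(u_n,u_n)\to 0$. Thus, if $e^{i\theta}a$ is essentially positive-coercive, the same weakly null sequences that witness the implication for $e^{i\theta}a$ work for $a$, giving essential coercivity of $a$.

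For the converse, my plan is to invoke a structural description of essential coercivity from \cite{ace}: an essentially coercive form admits a decomposition $a=a_0+k$, where $a_0$ is coercive on $V$ and $k$ is a \emph{compact} sesquilinear form, in the sense that $k(u_n,u_n)\to 0$ whenever $u_n\rightharpoonup 0$ in $V$ (equivalently, $k(u,v)=\langle Ku,v\rangle_V$ for some compact $K\in\mathcal{L}(V)$). Given such a splitting, Proposition~\ref{prop:51}(a) applied to $a_0$ produces $\theta\in\R$ with $e^{i\theta}a_0$ positive-coercive, say $\re(e^{i\theta}a_0(u,u))\ge \alpha\|u\|_V^2$. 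Then for any $u_n\rightharpoonup 0$ with $\limsup_n \re(e^{i\theta}a(u_n,u_n))\le 0$, compactness of $k$ forces $e^{i\theta}k(u_n,u_n)\to 0$, so $\limsup_n \re(e^{i\theta}a_0(u_n,u_n))\le 0$, whence $\alpha\|u_n\|_V^2\to 0$ and thus $\|u_n\|_V\to 0$. This is exactly essential positive-coercivity of $e^{i\theta}a$.

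The main obstacle is therefore the decomposition theorem from \cite{ace}; all the rest is a short compactness computation. Should the reference supply only a Fredholm-type statement such as Proposition~\ref{propace:2.1}, I would fall back on a direct argument via the essential numerical range
\[
 W_{\mathrm{ess}}(a,V):=\{\lambda\in\C:\exists\, u_n\in V,\ \|u_n\|_V=1,\ u_n\rightharpoonup 0,\ a(u_n,u_n)\to\lambda\}.
\]
A short argument, by normalising weakly null sequences to unit $V$-norm, shows that $a$ is essentially coercive iff $0\notin W_{\mathrm{ess}}(a,V)$, and that $a$ is essentially positive-coercive iff $W_{\mathrm{ess}}(a,V)\subset\{\re z>0\}$. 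A Toeplitz--Hausdorff style construction, exploiting that two weakly null sequences in an infinite-dimensional Hilbert space can be made asymptotically orthogonal along subsequences, gives convexity of $\overline{W_{\mathrm{ess}}(a,V)}$. Since this closed convex set avoids $0$, a real-linear separating functional on $\C\cong\R^2$ produces $\theta\in\R$ with $\re(e^{i\theta}\lambda)>0$ for all $\lambda\in W_{\mathrm{ess}}(a,V)$, which is the desired essential positive-coercivity of $e^{i\theta}a$.
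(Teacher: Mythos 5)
Your primary argument reproduces the paper's proof: the decomposition you hope for is exactly what \cite[Theorem~4.4]{ace} supplies, namely a compact $K:V\to V'$ such that $b(u,v)=a(u,v)+\langle Ku,v\rangle$ is coercive; the paper then applies Proposition~\ref{prop:51}(a) to $b$ and concludes with precisely the compactness computation you give. So the ``main obstacle'' you flagged is not one, and no fallback is needed. One inaccuracy worth fixing: your parenthetical claim that ``$k(u_n,u_n)\to 0$ whenever $u_n\rightharpoonup 0$'' is \emph{equivalent} to $k$ being induced by a compact operator on $V$ is false in general: a bounded skew-adjoint $K$ gives $\langle Ku,u\rangle\equiv 0$ without $K$ being compact. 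Only the implication from compact $K$ to the vanishing property is used, and that is what \cite{ace} provides, so your argument is unaffected.

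Your fallback via $W_{\mathrm{ess}}(a,V)$ is a genuinely different and appealing route, and the reformulations of essential coercivity as $0\notin W_{\mathrm{ess}}(a,V)$ and of essential positive coercivity as $W_{\mathrm{ess}}(a,V)\subset\{\re z>0\}$ check out after the usual renormalisation of a weakly null sequence along a subsequence with $\|u_n\|_V$ bounded away from zero. However, the convexity of $\overline{W_{\mathrm{ess}}(a,V)}$ is the heart of that argument and is asserted rather than proved; the Toeplitz--Hausdorff-style construction via asymptotically orthogonal subsequences would need to be carried out in detail. Since the decomposition is directly available in \cite{ace}, the main route is simpler and matches the paper.
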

\begin{proof}
	Assume that $a$ is essentially coercive. By \cite[Theorem 4.4]{ace}, there exists a compact operator $K:V\to V'$ such that the form $b$ given by 
	\[  b(u,v)=a(u,v) +\langle Ku,v\rangle \]
	is coercive. Thus, by Proposition~\ref{prop:51}, there exist $\theta\in\R$ and $\alpha>0$ such that 
	\[  \re e^{i\theta}(a(u,u)+\langle Ku,u\rangle )\geq \alpha \|u\|_V^2 \mbox{ for all }u\in V.  \]  	
	This implies that $e^{i\theta}a$ is essentially positive-coercive. In fact, consider  $u_n\rightharpoonup 0$ in $V$  such that $\limsup_{n\to\infty} \re (e^{i\theta}a(u_n,u_n))\leq 0$. Since $K$ is compact, $\|Ku_n\|_{V'}\to 0$. Consequently, $\langle Ku_n,u_n\rangle_V\to 0$ as $n\to\infty$. We conclude by
	\[ \alpha \limsup_{n\to\infty} \|u_n\|_V^2\leq \limsup_{n\to\infty}\re (e^{i\theta} a(u_n,u_n))\leq 0.    \]
\end{proof}
We will see that also Proposition~\ref{prop:51} b) has an essential analogue. 

Before that we give the following characterization of essential positive coerciveness which shows that this property is indeed the same as positive coerciveness after compact perturbation. Recall that $\dim V=\infty$.      
\begin{thm}\label{th:54}
	The following assertions are equivalent:
	\begin{itemize}
		\item[(i)] the form $a$ is essentially positive-coercive; 
		\item[(ii)] there exist a finite dimensional subspace $V_1$ of $V$ and $\alpha>0$ such that 
		\[    \re a(u,u)  \geq \alpha \|u\|_V^2 \mbox{ for all }u\in V_1^\perp;\]
		\item[(iii)] there exist a  finite rank operator $K:V\to V'$ and $\alpha>0$ such that 
		\[
		\re (a(u,u) +\langle Ku,u\rangle)\geq \alpha \|u\|_V^2\mbox{ for all }u\in V;
		\]
		\item[(iv)]  there exist a  compact operator $K:V\to V'$ and $\alpha>0$ such that 
		\[
		\re (a(u,u) +\langle Ku,u\rangle)\geq \alpha \|u\|_V^2\mbox{ for all }u\in V.
		\]
		\item[(v)]  there exist a Hilbert space $Y$, a compact operator $K:V\to Y$ and $\alpha>0$ such that 
		\[
		\re (a(u,u) +\|Ku\|_Y^2)\geq \alpha \|u\|_V^2\mbox{ for all }u\in V.
		\]
	\end{itemize} 
\end{thm}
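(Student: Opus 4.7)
My strategy is to close the cycle $(i) \Rightarrow (ii) \Rightarrow (iii) \Rightarrow (iv) \Rightarrow (v) \Rightarrow (i)$, which yields the equivalence of all five conditions in one loop.

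For $(i)\Rightarrow(ii)$ I argue by contradiction: if no pair $(V_1,\alpha)$ as in (ii) exists, then at each stage $n$ I can select a unit vector $u_n$ orthogonal to $u_1,\dots,u_{n-1}$ with $\re a(u_n,u_n)<1/n$ (taking $V_{n-1}$ to be the linear span of the previously chosen vectors and $\alpha=1/n$ in the negation of (ii)). The resulting orthonormal sequence satisfies $u_n\rightharpoonup 0$ and $\limsup \re a(u_n,u_n)\leq 0$ while $\|u_n\|_V=1$, contradicting (i). For $(ii)\Rightarrow(iii)$, let $P:V\to V$ be the orthogonal projection onto $V_1$ and define the rank-$\dim V_1$ operator $K:V\to V'$ by $\langle Ku,v\rangle:=M\langle Pu,v\rangle_V$; then $\langle Ku,u\rangle=M\|Pu\|_V^2$. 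Decomposing $u=u_0+u_1$ with $u_0\in V_1^\perp$ and $u_1=Pu$, the hypothesis (ii) controls $\re a(u_0,u_0)$, while the cross terms $a(u_0,u_1)+a(u_1,u_0)$ are absorbed via Young's inequality at the cost of a fraction of the coercivity on $V_1^\perp$; choosing $M$ sufficiently large yields $\re(a(u,u)+\langle Ku,u\rangle)\geq \tfrac{\alpha}{2}\|u\|_V^2$. The step $(iii)\Rightarrow(iv)$ is immediate since finite-rank operators are compact.

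For $(iv)\Rightarrow(v)$, I identify $V'$ with $V$ via Riesz so that $K$ becomes a compact $\tilde K:V\to V$, and replace it by its self-adjoint part $S:=\tfrac{1}{2}(\tilde K+\tilde K^*)$, a compact self-adjoint operator with $\re\langle Ku,u\rangle=\langle Su,u\rangle_V$. The spectral decomposition gives $S=S_+-S_-$ with $S_\pm\geq 0$ compact; setting $Y:=V$ and $K':=S_+^{1/2}$ (compact as the square root of a compact positive operator) one obtains $\|K'u\|_Y^2=\langle S_+u,u\rangle_V\geq \langle Su,u\rangle_V=\re\langle Ku,u\rangle$, so the inequality of (iv) transfers to one of the form in (v). Finally, for $(v)\Rightarrow(i)$, suppose $u_n\rightharpoonup 0$ in $V$ with $\limsup \re a(u_n,u_n)\leq 0$; since $K:V\to Y$ is compact, $Ku_n\to 0$ in $Y$, hence $\|Ku_n\|_Y^2\to 0$, and the coercive bound $\alpha\|u_n\|_V^2\leq \re a(u_n,u_n)+\|Ku_n\|_Y^2$ forces $\|u_n\|_V\to 0$ by passing to $\limsup$.

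I expect the main obstacle to be $(ii)\Rightarrow(iii)$, where the finite-rank perturbation must both penalize $V_1$ heavily and tame the off-diagonal coupling between $V_1$ and $V_1^\perp$; this is what forces the use of a Young-type estimate with a carefully tuned absorption constant. The step $(iv)\Rightarrow(v)$ is the most structured but becomes routine once the problem is reduced to a symmetric compact operator on $V$, while the orthonormal-sequence construction in $(i)\Rightarrow(ii)$ is the cleanest direct application of the definition of essential positive coerciveness.
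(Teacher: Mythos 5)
Your proposal is correct, and in the non-trivial implications it takes a genuinely different route from the paper. For (i)$\Rightarrow$(ii) you negate (ii) directly for $\re a$ and inductively build an orthonormal sequence $u_n\perp\mathrm{span}\{u_1,\dots,u_{n-1}\}$ with $\re a(u_n,u_n)<1/n$; the paper instead first proves the two-sided bound $|\re a(u,u)|\geq\alpha\|u\|_V^2$ on $V_1^\perp$ (Lemma~\ref{lem:5.5}, via finite-rank projections converging strongly to the identity) and then excludes the case $-\re a(u,u)\geq\alpha\|u\|_V^2$ using the convexity result Proposition~\ref{prop:51} and a weakly null unit sequence in $V_1^\perp$. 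Your version is more direct, needs no case distinction and no approximating projections. For (ii)$\Rightarrow$(iii) you keep only the penalty $\langle Ku,v\rangle=M\langle Pu,v\rangle_V$ and absorb the cross terms by Young's inequality with $M$ large (note you should also mention the block term $a(Pu,Pu)$, which is likewise bounded by $C\|Pu\|_V^2$ and swallowed by $M$); the paper instead cancels the off-diagonal and $V_1$-block parts of $a$ exactly by subtracting the finite-rank form $b(u,v)=a(Qu,Pv)+a(Pu,Qv)+a(Pu,Pv)$ and adding $\alpha\langle Pu,v\rangle_V$, which gives the constant $\alpha$ rather than your $\alpha/2$ --- a cosmetic difference, at the price of your argument being a bit more robust to bookkeeping. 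Finally, you close the cycle through (iv)$\Rightarrow$(v) by symmetrizing the Riesz transcription of $K$ and taking $K'=S_+^{1/2}$, whereas the paper proves (iii)$\Rightarrow$(v) from the finite-rank case via the spectral decomposition and $K_2=\sqrt{\lambda}\,P$ with $\lambda=\max_k\lambda_k$; your construction works for an arbitrary compact $K$ and has no issue when all eigenvalues are negative (where the paper's $\sqrt{\lambda}$ is undefined and one should simply take $K_2=0$). The steps (iii)$\Rightarrow$(iv) and (iv),(v)$\Rightarrow$(i) coincide with the paper's, so overall the logical loop $(i)\Rightarrow(ii)\Rightarrow(iii)\Rightarrow(iv)\Rightarrow(v)\Rightarrow(i)$ you chose (versus the paper's four-cycle plus the branch (iii)$\Rightarrow$(v)$\Rightarrow$(i)) establishes the equivalence just as well.
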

We precede the proof by a lemma. The arguments are similar to \cite[Theorem 4.2 (i)$\Rightarrow$ (ii)]{ace}. To be complete, we give them (in a more concise way). 
\begin{lem}\label{lem:5.5}
	If $a$ is essentially real-coercive, then there exists a finite dimensional subspace $V_1\subset V$ such that $a$ is real-coercive on $V_1^\perp \times V_1^\perp$. 	
\end{lem}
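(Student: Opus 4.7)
The plan is to argue by contradiction, extracting from a failure of the conclusion an orthonormal sequence that violates essential real-coerciveness.

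First I would negate the conclusion: suppose that for every finite-dimensional subspace $W \subset V$ and every $\alpha>0$, the form $a$ is not real-coercive on $W^\perp$, i.e.\ there exists $u\in W^\perp\setminus\{0\}$ with $|\re a(u,u)|<\alpha\|u\|_V^2$. By homogeneity I may normalize to $\|u\|_V=1$. Using $\dim V=\infty$, I would then construct inductively an orthonormal sequence $(u_n)_{n\geq 1}$ in $V$ satisfying $|\re a(u_n,u_n)|<1/n$: choose $u_1$ with $\|u_1\|_V=1$ and $|\re a(u_1,u_1)|<1$; given orthonormal $u_1,\dots,u_n$ fulfilling the bounds, apply the negated hypothesis to $W:=\span\{u_1,\dots,u_n\}$ with $\alpha:=1/(n+1)$ to obtain $u_{n+1}\in W^\perp$ of unit norm with $|\re a(u_{n+1},u_{n+1})|<1/(n+1)$.

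Since $(u_n)$ is orthonormal in the Hilbert space $V$, it converges weakly to $0$; by construction $\re a(u_n,u_n)\to 0$. Essential real-coerciveness then forces $\|u_n\|_V\to 0$, contradicting $\|u_n\|_V=1$. Hence there must exist a finite-dimensional $V_1\subset V$ and $\alpha>0$ with $|\re a(u,u)|\geq\alpha\|u\|_V^2$ for all $u\in V_1^\perp$, which is precisely real-coerciveness of $a$ on $V_1^\perp\times V_1^\perp$.

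The only subtle point is the bookkeeping in the induction: one must verify that the negated conclusion really produces, at each stage, a unit vector lying in the orthogonal complement of all previously chosen vectors while delivering the shrinking estimate on $|\re a(u_n,u_n)|$. Beyond this, everything is automatic: orthonormality yields the weak limit $0$ for free, and the contradiction is immediate. I therefore do not expect any deeper obstacle, and the lemma reduces to careful formalization of this selection procedure.
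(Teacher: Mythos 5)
Your argument is correct. The negation of the conclusion is quantified properly (for every finite-dimensional $W$ and every $\alpha>0$ there is a nonzero, hence normalizable, $u\in W^\perp$ with $\module{\re a(u,u)}<\alpha\|u\|_V^2$), the inductive selection produces an orthonormal sequence $(u_n)$ with $\re a(u_n,u_n)\to 0$, orthonormality gives $u_n\rightharpoonup 0$ via Bessel's inequality, and essential real-coerciveness then forces $\|u_n\|_V\to 0$, contradicting $\|u_n\|_V=1$.

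Your route differs slightly from the paper's. The paper fixes a sequence of finite-rank orthogonal projections $P_n$ converging strongly to the identity and first establishes the quantitative claim that $\module{\re a(u,u)}+\|P_nu\|_V^2\geq\alpha\|u\|_V^2$ for some $n$ and $\alpha>0$; its contradiction sequence is not weakly null a priori, so one passes to a weakly convergent subsequence and uses $\|P_nu_n\|_V\to 0$ together with $P_nu_n\rightharpoonup u$ to identify the weak limit as $0$, and then takes $V_1=P_nV$. Your inductive orthonormal selection is more elementary: weak nullity comes for free and no subsequence extraction or projection machinery is needed. What the paper's formulation buys is the intermediate inequality involving $\|P_nu\|_V^2$, which is of the same compact-perturbation flavor as Theorem~\ref{th:54}(v) and mirrors the argument in the cited Galerkin paper; for the lemma as stated, both proofs deliver exactly the same conclusion.
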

\begin{proof}
	Let $P_n$ be orthogonal projections of finite rank converging strongly to the identity. We claim that there exist $\alpha>0$ and $n\in\N$ such that  
	\begin{equation}\label{neq:54}
	|\re a(u,u)| +\|P_nu\|^2_V\geq \alpha\|u\|_V^2\mbox{ for all }u\in V.
	\end{equation} 	
	In fact, if not, we find $u_n\in V$ such that $\|u_n\|_V=1$ and 
	\begin{equation}\label{eq:5.6}
	|\re a(u_n,u_n)| +\|P_nu_n\|^2_V < \frac{1}{n}.
	\end{equation}
	We may assume that $u_n\rightharpoonup u$ in $V$, taking a subsequence otherwise. Then $P_n u_n\rightharpoonup u$ as $n\to\infty$. In fact, let $v\in V$. Then $P_n v\to v$. Hence 
	\[  \langle P_n u_n,v\rangle_V =\langle u_n, P_n v\rangle_V\to \langle u,v\rangle_V.   \]
	Since $\|P_n u_n\|_V\to 0$ by (\ref{eq:5.6}), it follows that $u=0$. Thus $| \re a(u_n,u_n)|\to 0$ by  (\ref{eq:5.6}), contradicting essential real coerciveness. Thus (\ref{neq:54}) holds. If we choose $V_1:=P_n V$, then $V_1^\perp =\ker P_n$ and by  (\ref{neq:54}) 
	\begin{equation}\label{eq:5.7}
	|\re a(u,u)|\geq \alpha \|u\|_V^2\mbox{ for all }u\in V_1^\perp.
	\end{equation} 
\end{proof}	

\begin{proof}[Proof of Theorem~\ref{th:54}]
	$(i)\Rightarrow (ii)$ By Lemma~\ref{lem:5.5} there exists a finite dimensional subspace $V_1$ of $V$ such that 	(\ref{eq:5.7}) holds.
	By Proposition~\ref{prop:51} b) two cases may occur. \\
	First case:  
	\[ -\re a(u,u)\geq \alpha \|u\|_V^2\mbox{ for all }u\in V_1^\perp.   \]
	Since $\dim V_1^\perp =\infty$, there exist $u_n\in V_1^\perp$ such that $\|u_n\|_V=1$ and $u_n\rightharpoonup 0$. Since $\re a(u_n,u_n)\leq -\alpha\leq 0$, this contradicts essential positive coerciveness. Thus this case is excluded, we are in the second case and $(ii)$ is proved. \\
	$(ii)\Rightarrow (iii)$ Denote by $P$ the orthogonal projection  onto $V_1$ and by $Q:=\Id -P$ the one onto $V_1^\perp$. Thus 
	\begin{equation*}
	a(Qu,Qu)\geq \alpha \|Qu\|_V^2\mbox{ for all }u\in V.
	\end{equation*} 
	Let 
	\[  b(u,v)=a(Qu,Pv) +  a(Pu,Qv) + a(Pu,Pv) \mbox{ for all } u,v\in V.\]
	Then $a(u,v)=a(Qu,Qv)+b(u,v)$ for all $u,v\in V$. There exists a finite rank operator $K_1:V\to V'$ such that 
	\[ b(u,v)=\langle K_1 u,v\rangle \mbox{ for all } u,v\in V.   \]
	Then define $K:V\to V'$ by 
	\[   \langle Ku,v\rangle  = -\langle K_1 u,v\rangle +\alpha \langle Pu,v\rangle_V.  \]
	Then $K$ has finite rank and 
	\begin{eqnarray*}
		\re a(u,u) & =&  \re (a(Qu,Qu)+\alpha \langle Pu,u\rangle_V) \\
		& \geq &  \alpha (\| Qu\|_V^2 +\alpha \|Pu\|_V^2)\\
		& = & \alpha \|u\|_V^2.
	\end{eqnarray*}
	$(iii)\Rightarrow (iv)$ is trivial.\\
	$(iv)\Rightarrow (i)$ Let $u_n\rightharpoonup 0$ in $V$ such that $\limsup_{n\to\infty}\re a(u_n,u_n)\leq  0$. Since $K$ is compact, $Ku_n\to 0$ in $V'$ and thus $\langle Ku_n,u_n\rangle\to 0$. Thus 
	\[  \limsup_{n\to\infty} \alpha \|u_n\|_V^2\leq   \limsup_{n\to\infty} \re a(u_n,u_n)\leq 0.  \]
	$(v)\Rightarrow (i)$ The proof is the same since $u_n\rightharpoonup 0$ in $V$ implies $\|Ku_n\|_Y\to 0$ as $n\to\infty$. \\
	$(iii)\Rightarrow (v)$ Let $j:V\to V'$ be the Riesz isomorphism. Then \[J=j^{-1}\circ K:V\to V\] is of finite rank and 
	\[     \re  a(u,u) +\re \langle Ju,u\rangle_V\geq \alpha \|u\|_V^2.   \]
	Let $K_1=\frac{1}{2} (J+J^*)$. Then $\re \langle Ju,u\rangle_V=\re \langle K_1 u,u\rangle_V$ and $K_1$ is self-adjoint and of finite rank since $J$ is of finite rank. Thus there exist orthonormal vectors $e_1,\cdots,e_n\in V$ and $\lambda_k\in \R$ such that  
	\[  K_1 u=\sum_{k=1}^n \lambda_k \langle u,e_k\rangle_V e_k  . \]
	Choose $\lambda =\max \{  \lambda_1,\cdots,\lambda_n \}$ and let $K_2=\sqrt{\lambda}P$ where 
	\[   Pu:=\sum_{k=1}^n \langle u,e_k\rangle_V e_k  \]
	defines an orthogonal projection of finite rank. Then 
	\begin{eqnarray*}
		\|K_2 u\|_V^2  & =  &\lambda \|Pu\|_V^2=\lambda \langle Pu,u\rangle_V= 	\lambda \sum_{k=1}^n |\langle u,e_k\rangle_V|^2 \\
		& \geq  &  \sum_{k=1}^n \lambda_k |\langle u,e_k\rangle_V|^2
		=  \langle K_1 u,u\rangle_V. 
	\end{eqnarray*}	 
	Thus 
	\[  \re a(u,u) +\|K_2 u\|_V^2\geq \alpha \|u\|_V^2. \]
\end{proof}
Next we want to characterize essential real coerciveness. It is obvious that $-a$ is real-coercive if and only if $a$ is real-coercive. Moreover, essential positive coerciveness is stronger than real coerciveness. Recall once more that we assume that $\dim V=\infty$. 
\begin{thm}\label{th:erc}
	Assume that $a$ is essentially real-coercive. 
	Then either $a$ is essentially positive-coercive or $-a$  is essentially positive-coercive. 
\end{thm}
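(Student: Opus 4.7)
The plan is to use Lemma~\ref{lem:5.5} to reduce the problem to a finite-codimensional subspace on which real-coerciveness can be upgraded to a signed inequality via convexity of the numerical range, and then to transfer the resulting positive coerciveness back to all of $V$ using that the complementary subspace is finite-dimensional.

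First, Lemma~\ref{lem:5.5} supplies a finite-dimensional subspace $V_1 \subset V$ and some $\alpha > 0$ with
\[
|\re a(u,u)| \geq \alpha \|u\|_V^2 \quad \text{for all } u \in V_1^\perp.
\]
Since $\dim V = \infty$, the space $V_1^\perp$ is itself an infinite-dimensional Hilbert space, and the restriction of $a$ to $V_1^\perp \times V_1^\perp$ is a continuous sesquilinear form that is real-coercive. Applying Proposition~\ref{prop:51}b) to this restriction (whose numerical range, being a convex subset of $\C$, has a convex real part in $\R$ that avoids the open interval $(-\alpha, \alpha)$), I obtain that either $\re a(u,u) \geq \alpha \|u\|_V^2$ for every $u \in V_1^\perp$, or else the same inequality holds with $-a$ in place of $a$. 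Replacing $a$ by $-a$ if necessary, I assume the first alternative and aim to show that $a$ itself is essentially positive-coercive.

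So let $u_n \rightharpoonup 0$ in $V$ with $\limsup_{n \to \infty} \re a(u_n, u_n) \leq 0$, and decompose $u_n = Pu_n + Qu_n$, where $P$ is the orthogonal projection of $V$ onto $V_1$ and $Q = \Id - P$. Because $P$ has finite rank it is compact, so $Pu_n \to 0$ in $V$. By boundedness of $a$ and the expansion
\[
a(u_n,u_n) = a(Qu_n,Qu_n) + a(Qu_n,Pu_n) + a(Pu_n,Qu_n) + a(Pu_n,Pu_n),
\]
the three terms involving $Pu_n$ go to $0$, hence $\re a(Qu_n,Qu_n) - \re a(u_n,u_n) \to 0$ and $\limsup_n \re a(Qu_n,Qu_n) \leq 0$. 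But $Qu_n \in V_1^\perp$ gives $\re a(Qu_n,Qu_n) \geq \alpha \|Qu_n\|_V^2 \geq 0$, forcing $\|Qu_n\|_V \to 0$. Combined with $\|Pu_n\|_V \to 0$, this yields $\|u_n\|_V \to 0$, as required.

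The step I expect to be most delicate is the sign selection on $V_1^\perp$: a priori $\re a(u,u)$ could be positive on some unit vectors of $V_1^\perp$ and negative on others, with only their absolute values being controlled. It is exactly here that the convexity of the numerical range (invoked through Proposition~\ref{prop:51}b)) is essential, as it forces the whole of $\re a(\cdot,\cdot)$ on $V_1^\perp$ to have a single sign; without this convexity the theorem would fail.
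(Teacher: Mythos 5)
Your proof is correct and follows essentially the same route as the paper: Lemma~\ref{lem:5.5} followed by the convexity dichotomy of Proposition~\ref{prop:51}~b) on $V_1^\perp$, after which the paper simply cites Theorem~\ref{th:54} (ii)$\Rightarrow$(i), while you reprove that implication directly via the $P$, $Q$ decomposition and the compactness of the finite-rank projection $P$ -- a valid, slightly more self-contained finish. The only ingredient of the paper's proof absent from yours is its part a), which shows the two alternatives are mutually exclusive; this is not required for the literal ``either/or'' statement you were asked to prove.
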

\begin{proof}
	a) First we show that the two assertions  are exclusive. Assume that $a$ and $-a$ are essentially positive-coercive. By Theorem~\ref{th:54} there exist $\alpha_j>0$  and compact operators $K_j:V\to V'$  ($j=1,2$) such that
	\[ \re ((-1)^j a(u,u) +\langle K_j u,u\rangle)\geq \alpha_j \|u\|_V^2.   \]	
	Adding these two inequalities, we deduce that 
	\begin{equation*}\label{eq:5.8}
	\re \langle (K_1+K_2)u,u\rangle \geq (\alpha_1+\alpha_2)\|u\|_V^2 \mbox{ for all }u\in V.
	\end{equation*}
	It follows that $K_1+K_2:V\to V'$ is invertible and compact, which is impossible since $\dim V=\infty$. \\
	b) By Lemma~\ref{lem:5.5} there exist $\alpha>0$ and  a finite dimensional subspace $V_1$ of $V$  such that 
	\[  |\re a(u,u)|\geq \alpha \|u\|_V^2  \mbox{ for all }u\in V_1^\perp .   \]
	By Proposition~\ref{prop:51}, two cases can occur : 
	\[ \re a(u,u)\geq \alpha \|u\|_V^2\mbox{ for all }u\in V_1^\perp    \] 
	or 
	\[ -\re a(u,u)\geq \alpha \|u\|_V^2\mbox{ for all }u\in V_1^\perp    \] 
	In the first case $a$ is essentially positive-coercive, by Theorem~\ref{th:54}, and in the second case $-a$ is essentially positive-coercive. 
\end{proof}	
\section{Asymptotic compactness and compact perturbation of forms}\label{sec:4}
In this section we study when a semigroup approaches a finite dimensional semigroup as $t\to\infty$. We call this property \emph{asymptotic compactness}. Our main result is concerned with compact perturbation of forms for which we show that they preserve asymptotic compactness. This section is of independent interest.  

Now assume that $-A$ generates a $C_0$-semigroup $S$ on a complex Banach space $X$. Assume that $\sigma_1$ is a compact and  relatively open subset of $\sigma(A)$. Then there 
exists a unique decomposition 
\begin{equation}\label{eq:new41}
X=X_1\oplus X_2,
\end{equation}
where $X_j$ are closed subspaces such that $S(t)X_j\subset X_j$, such that $A_1$ is bounded and $\sigma (A_1)=\sigma_1$, $\sigma(A_2)=\sigma(A)\backslash \sigma_1$, where $-A_j$ is the generator of $S_{|X_j}$ for $j=1,2$. We refer to \cite[A-III Theorem~3.3]{Na86}. The projection $P_{\sigma_1}$ onto $X_1$ along (\ref{eq:new41}) is called the \emph{spectral projection}  associated with $\sigma_1$. If $\lambda$ is an isolated point, we call $P_\lambda:=P_{\{\lambda\}}$ \emph{the spectral projection} associated with 	$\lambda$. 

Let $A$ be a closed operator on $X$. We say that $A$ is a \emph{Fredholm operator} if $\ker A$ and $X/\mathop{range}(A)$ have finite dimension. This implies that $\mathop{range}(A)$ is 
closed in $X$. By 
\[  \rho_F(A):=\{  \lambda\in\C:\lambda\Id-A\mbox{ is a Fredholm operator}  \}  \]
we denote the \emph{Fredholm resolvent set} of $A$. It is an open subset of $\C$ and we denote by $\sigma_{ess}(A):=\C\backslash \rho_F(A)$ the \emph{essential spectrum} of $A$. 

The following property is remarkable (see \cite[p. 243]{Ka80} or \cite[1.3.1]{MM}).
\begin{prop}\label{prop:4.1}
Let $A$ be the negative generator of a $C_0$-semigroup and let $\omega\subset \rho_F(A)$ be an open connected set. If $\omega\cap\rho(A)\neq \emptyset$, then $\omega\cap \sigma(A)$ is discrete and 
$P_\lambda$ has finite rank for all $\lambda\in\omega\cap\sigma(A)$.  
\end{prop}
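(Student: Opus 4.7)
The plan is to reduce the assertion to the analytic Fredholm theorem for holomorphic families valued in Fredholm operators between two Banach spaces. First I would equip $D(A)$ with the graph norm, yielding a Banach space $X_A$, and view the pencil
\[ T(\lambda) := \lambda \Id - A \]
as an element of ${\mathcal L}(X_A, X)$, depending affinely (hence holomorphically) on $\lambda \in \C$. Because $A$ is closed, the condition $\lambda \in \rho_F(A)$ translates verbatim into: $T(\lambda)$ is a Fredholm operator from $X_A$ to $X$. Consequently $T(\lambda)$ is Fredholm for every $\lambda \in \omega$, and for the chosen $\mu_0 \in \omega \cap \rho(A)$ the map $T(\mu_0)$ is a bijection $X_A \to X$, hence invertible in ${\mathcal L}(X_A, X)$.

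Next I would invoke the analytic Fredholm theorem in its Banach-space-valued form (Gohberg--Leiterer): on a connected open set $\omega$ where a holomorphic Fredholm family is invertible at one point, the set of non-invertibility is discrete, the map $\lambda \mapsto T(\lambda)^{-1}$ extends to a meromorphic ${\mathcal L}(X, X_A)$-valued function on $\omega$, and at each pole every principal-part coefficient is an operator of finite rank. Invertibility of $T(\lambda)$ as a map $X_A \to X$ is tantamount to $\lambda \in \rho(A)$, so this immediately yields the discreteness of $\omega \cap \sigma(A)$.

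For an isolated point $\lambda \in \omega \cap \sigma(A)$, the spectral projection is given by the Riesz formula
\[ P_\lambda = \frac{1}{2\pi i} \oint_\gamma R(\zeta, A)\, d\zeta , \]
where $\gamma$ is a small circle around $\lambda$ enclosing no other spectrum. Composed with the continuous inclusion $X_A \hookrightarrow X$, the resolvent $R(\zeta, A) = T(\zeta)^{-1}$ is meromorphic into ${\mathcal L}(X)$ near $\lambda$, and the analytic Fredholm theorem guarantees that the $-1$ Laurent coefficient (which is exactly $P_\lambda$) has finite rank.

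The main obstacle is to cite and apply the appropriate form of the analytic Fredholm theorem. The familiar textbook version for families $\Id + K(\lambda)$ with $K(\lambda)$ compact does not suffice here, because $\omega \subset \rho_F(A)$ does not force the resolvent to be compact; one must instead use the Gohberg--Leiterer version for holomorphic Fredholm families acting between two (possibly distinct) Banach spaces. With that tool in hand, the identification of $P_\lambda$ as a finite-rank Laurent residue is routine.
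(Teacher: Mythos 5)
Your argument is correct. The paper does not actually prove Proposition~\ref{prop:4.1}: it only cites \cite[p.~243]{Ka80} and \cite[1.3.1]{MM}, and the proof behind those references is essentially the one you give, namely regard $T(\lambda)=\lambda\Id-A$ as a holomorphic (indeed affine) family of Fredholm operators from $D(A)$ with the graph norm into $X$ and apply the analytic Fredholm alternative for operator functions between two Banach spaces (the Gohberg--Leiterer / Mennicken--M\"oller setting), which yields discreteness of the non-invertibility set and a finitely meromorphic inverse with finite-rank principal-part coefficients. Two minor points worth making explicit: the $C_0$-semigroup hypothesis plays no role (closedness of $A$ is all you use, and it is what identifies invertibility of $T(\lambda):X_A\to X$ with $\lambda\in\rho(A)$), and for an isolated point $\lambda$ one should note that the projection $P_\lambda$ defined in the paper via the spectral decomposition of \cite{Na86} coincides with the Riesz integral $\frac{1}{2\pi i}\oint_\gamma R(\zeta,A)\,d\zeta$, i.e.\ with the $(\zeta-\lambda)^{-1}$ Laurent coefficient whose finite rank the theorem delivers; composing with the embedding $X_A\hookrightarrow X$ preserves finite rank, as you observe.
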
  
There are several different definitions of the essential spectrum (see \cite{EE} for 5 definitions). For example, in  \cite{RS}, the essential spectrum is the complement in $\sigma(A)$ of the set of all isolated points in $\sigma(A)$ with spectral projection of finite rank. For selfadjoint operators this coincides with our definition here, by Proposition~\ref{prop:4.1}, which also shows that the notion of spectral radius is independent of the definition.        
 
For $X$ a Banach space and  $T\in\LX$,  we let \[  \|T\|_{ess} :=\inf_{K\in\KX} \|T-K\|, \]
where ${\mathcal K}(X)$ is the closed ideal of $\LX$ consisting   of all compact operators.  
The \emph{Calkin algebra} $\LX /\KX$ is a Banach algebra for the norm
\[   \| \tilde{T}\|:=\|T\|_{ess}\]
where $T\mapsto \tilde{T}:\LX\to \LX /\KX$
 is the quotient mapping.   As is well-known, one has 
 \[   \sigma_{ess}(T)=\sigma(  \tilde{T}), \]
 where $\sigma(\tilde{T})$ denotes the spectrum of $\tilde{T}$ in the Calkin algebra. We denote by 
 \[r_{ess}(T)=  \sup\{ |\lambda|:\lambda\in \sigma_{ess}(T) \}\]
 the \emph{essential spectral radius} of $T$. 
 
 Now let $S$ be a $C_0$-semigroup on $X$ and $-A$ its generator. We denote by 
 \[\omega_{ess}(A):=\inf\{     w\in\R:\exists M\geq 0 \mbox{ such that }\|S(t)\|\leq Me^{wt}\mbox{ for all }t\geq 0\}\]       
 the \emph{essential growth bound} of $S$. Thus
  \[    \omega_{ess}(A)=\limsup_{t\to\infty}\frac{1}{t}\log \|S(t)\|_{ess} =\inf_{t>0} \log \|S(t)\|_{ess},   \]
which implies that 
\begin{equation*}\label{eq:4.3}
r_{ess}(S(t))=e^{t\omega_{ess}(A)}\mbox{ for all }t>0,
\end{equation*}  
 see  \cite[A III.]{Na86}. 

We  call the semigroup  $S$ \emph{asymptotically compact} if $\omega_{ess}(S)<0$.   Here we deviate from the terminology in \cite{EN00} and \cite{Na86} where \emph{quasi-compact} is used instead.  
 Recall that $S$ is called \emph{uniformly exponentially stable} if there exist $\varepsilon >0$, $M\geq 1$ such that 
\[    \|S(t)\|  \leq Me^{-\varepsilon t}\quad (t\geq 0).  \]
Asymptotic compactness can be characterized as follows. 
\begin{prop}\label{prop:n4.2}
Let $S$ be  $C_0$-semigroup on $X$ with generator $A$. The following assertions are equivalent:
\begin{itemize}
	\item[a)] $S$ is asymptotically compact; 
    \item[b)] there exists a decomposition $X=X_1\oplus X_2$ where $X_j$, $j=1,2$ are closed subspaces which are invariant under $S$ such that $\dim X_1<\infty$ and $S_2$ is uniformly exponentially  stable, where $S_2(t)={S(t)}_{|X_2}$.    
\end{itemize}     
\end{prop}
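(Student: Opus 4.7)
\emph{Strategy.} I would prove the two implications separately. Direction $(b)\Rightarrow(a)$ is essentially tautological: on the finite-dimensional component $X_1$ every operator is compact, while the other component carries the assumed exponential decay. Direction $(a)\Rightarrow(b)$ is a Riesz spectral-decomposition argument applied to one bounded operator $T:=S(t_0)$ for an arbitrary fixed $t_0>0$: the spectrum of $T$ outside a small disk turns out to be a finite set of eigenvalues with finite-dimensional generalized eigenspaces, and the associated spectral projection yields the desired splitting.

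\emph{Easy direction.} Assume $(b)$ and let $P\in\LX$ be the projection of $X$ onto $X_1$ along $X_2$, which commutes with every $S(t)$ since both summands are $S$-invariant. The range of $S(t)P$ lies in the finite-dimensional space $X_1$, so $S(t)P$ is of finite rank, hence compact. Using $S(t)(I-P)x=S_2(t)(I-P)x\in X_2$ together with the hypothesis $\|S_2(t)\|\le Me^{-\varepsilon t}$, I obtain
\[
\|S(t)\|_{ess}\;\le\;\|S(t)(I-P)\|\;\le\; M\,\|I-P\|\,e^{-\varepsilon t},
\]
so $\omega_{ess}(A)\le-\varepsilon<0$.

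\emph{Hard direction.} Assume $(a)$, fix $t_0>0$, and set $T:=S(t_0)$. The identity $r_{ess}(T)=e^{t_0\omega_{ess}(A)}$ gives $r_{ess}(T)<1$, so I may pick $\rho\in(r_{ess}(T),1)$. The open set $\{|z|>\rho\}$ is contained in the Fredholm resolvent set $\rho_F(T)$ and meets $\rho(T)$ (e.g.\ at every $z$ with $|z|>\|T\|$); the bounded-operator counterpart of Proposition~\ref{prop:4.1} then gives that $\sigma_1:=\sigma(T)\cap\{|z|\ge\rho\}$ is discrete with finite-rank spectral projection at each point. Any accumulation point of $\sigma_1$ would necessarily lie in $\sigma_{ess}(T)\subset\{|z|\le r_{ess}(T)\}$, contradicting $\rho>r_{ess}(T)$; hence $\sigma_1$ is in fact finite. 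Let $P$ be the associated Dunford--Riesz spectral projection, and set $X_1:=PX$, $X_2:=(I-P)X$. Then $\dim X_1<\infty$, and because $P$ is a contour integral of $R(\cdot,T)$ it commutes with every $S(s)$, so both $X_1$ and $X_2$ are $S$-invariant. The restriction $S_2(t):=S(t)_{|X_2}$ is a $C_0$-semigroup whose value at $t_0$ satisfies $\sigma(S_2(t_0))=\sigma(T)\setminus\sigma_1\subset\{|z|<\rho\}$, hence has spectral radius $<1$; the spectral-radius formula for $C_0$-semigroups then forces the growth bound of $S_2$ to be strictly negative, i.e.\ $S_2$ is uniformly exponentially stable.

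\emph{Main obstacle.} The delicate step is the Riesz/Fredholm input just invoked: showing that the part of $\sigma(T)$ outside a disk of radius slightly larger than $r_{ess}(T)$ consists of only finitely many isolated eigenvalues with finite-dimensional generalized eigenspaces. This combines the bounded-operator analogue of Proposition~\ref{prop:4.1} with the observation that accumulation points of $\sigma(T)$ must lie in $\sigma_{ess}(T)$; both ingredients are standard but must be cited at the appropriate level of generality, and one must also check that the Dunford--Riesz projection commutes with the full semigroup (which follows automatically from its construction as a resolvent integral of $T$).
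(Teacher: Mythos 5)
Your proof is correct and follows essentially the same route as the paper: the easy direction by splitting off the compact finite-rank part $S(t)P$, and the hard direction by taking a single operator $T=S(t_0)$ (the paper uses $t_0=1$), using $r_{ess}(T)=e^{t_0\omega_{ess}}<1$ and the Riesz spectral projection for the peripheral spectrum $\sigma_1$. The only difference is that you spell out the details (discreteness via Proposition~\ref{prop:4.1}, commutation of $P$ with the semigroup, the spectral radius formula for the growth bound of $S_2$) which the paper delegates to a citation of Nagel's book.
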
  
\begin{proof}
 $b)\Rightarrow a)$ Since $\|S(t)\|_{ess}\leq \|S_2(t)\|\leq Me^{-\varepsilon t}$, it follows that $\omega_{ess}(A)<0$. \\
 $a)\Rightarrow b)$ Since $\omega_{ess}(A)<0$, one has $r_{ess}(S(1))=e^{\omega_{ess}(S)}<1$. Let $r\in (r_{ess}(S(1)),1)$ and note that  $\sigma_1:=\{  \lambda\in \sigma(S(1)):|\lambda|>r\}$  is finite and the spectral projection $P$ for $S(1)$ associated with $\sigma_1$ has finite rank. Then $PX=:X_1$ and $X_2:=\ker P$ define a decomposition with the desired properties (cf. \cite[A III Corollary 3.5]{Na86}).  
\end{proof}	
Thus, a semigroup $S$ is asymptotically compact if and only if it converges to a finite dimensional semigroup as $t\to\infty$, and this exponentially fast. This implies that the qualitative behaviour of $S(t)$ when $t\to\infty$ is determined by a finite dimensional system. 

We would like to add the following property which is basically a corollary of Proposition~\ref{prop:n4.2} (cf. \cite[B-IV Theorem 2.10]{Na86}).   
\begin{prop}\label{prop:n4.3}
Let $S$ be an asymptotically compact $C_0$-semigroup with generator $-A$. Then the set $\sigma_{-}(A):=\{  \lambda\in\sigma(A):\re \lambda\leq 0\}$ is finite and the spectral projection associated with $\sigma_{-}(A)$ has finite rank.
\end{prop}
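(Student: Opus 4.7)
The plan is to deduce this directly from Proposition~\ref{prop:n4.2}. First I would apply that decomposition to write $X = X_1 \oplus X_2$ with $X_1$ finite-dimensional, both subspaces $S$-invariant, and $S_2 := S_{|X_2}$ uniformly exponentially stable. Let $-A_j$ denote the generator of $S_j$ on $X_j$, so that $A$ is the direct sum $A_1 \oplus A_2$ with respect to this decomposition, and in particular
\[
\sigma(A) = \sigma(A_1) \cup \sigma(A_2).
\]
Since $X_1$ is finite-dimensional, $\sigma(A_1)$ is a finite set.

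Next I would locate $\sigma(A_2)$. Because $S_2$ is uniformly exponentially stable, its growth bound $\omega(S_2)$ is strictly negative, say $\omega(S_2) \leq -\varepsilon < 0$. Since the spectral bound of a generator is dominated by its growth bound, $\sigma(-A_2) \subset \{\mu \in \C : \re \mu \leq -\varepsilon\}$, hence $\sigma(A_2) \subset \{\lambda \in \C : \re \lambda \geq \varepsilon\}$. Consequently $\sigma_-(A) \cap \sigma(A_2) = \emptyset$, so $\sigma_-(A) \subset \sigma(A_1)$. Being a subset of a finite set, $\sigma_-(A)$ is itself finite, which is the first claim.

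For the spectral projection, observe that each point of $\sigma_-(A)$ is isolated in $\sigma(A)$ (it lies in the finite set $\sigma(A_1)$, which is separated from $\sigma(A_2)$ by the half-plane $\{\re \lambda < \varepsilon\}$). Thus $\sigma_-(A)$ is a compact, relatively open subset of $\sigma(A)$, and the spectral projection $P := P_{\sigma_-(A)}$ is well-defined. Choosing a contour $\Gamma$ enclosing $\sigma_-(A)$ but no point of $\sigma(A_2)$, one has
\[
P = \frac{1}{2\pi i}\int_\Gamma R(\lambda,A)\,d\lambda,
\]
and since $R(\lambda,A) = R(\lambda,A_1)P_1 + R(\lambda,A_2)P_2$ with $\Gamma$ lying in $\rho(A_2)$, the $A_2$-part of the integrand is holomorphic inside $\Gamma$ and thus contributes nothing. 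Hence $P = P_{\sigma_-(A_1)}P_1$ has range contained in $X_1$, which is finite-dimensional, so $P$ has finite rank.

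There is no real obstacle here; the only subtle point is the decomposition of the resolvent, for which one simply uses that $A$ respects the direct sum $X_1 \oplus X_2$ and that $\sigma(A_1)$ and $\sigma(A_2)$ are separated.
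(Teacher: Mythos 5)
Your argument is correct and follows exactly the route the paper intends: the paper states this result as ``basically a corollary of Proposition~\ref{prop:n4.2}'' (with a reference to Nagel's book), and your proof simply fills in that derivation, using the decomposition $X=X_1\oplus X_2$, the spectral bound estimate $\sigma(A_2)\subset\{\re\lambda\geq\varepsilon\}$ from the exponential stability of $S_2$, and the vanishing of the $A_2$-part of the contour integral. The only cosmetic point is that the contour $\Gamma$ should be chosen to enclose no point of $\sigma(A_1)\setminus\sigma_-(A)$ either, which is possible since $\sigma(A_1)$ is finite and $\sigma_-(A)$ is separated from the rest of $\sigma(A)$.
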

Next we state a perturbation theorem due to Desch-Schappacher which will be needed later. 
\begin{thm}{\cite[Theorem~3.7.25]{ABHN11}}\label{th:3.2}
Let $A$ be the generator of a holomorphic $C_0$-semigroup $S$ and let $K:D(A)\to X$ be compact (where $D(A)$ carries the graph norm). Then $A+K$ generates a holomorphic $C_0$-semigroup $\tilde{S}$ and $\omega_{ess}(A+K)=\omega_{ess}(A)$. 
\end{thm}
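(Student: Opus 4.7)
The plan splits naturally into two parts: (1) showing that $A+K$ generates a holomorphic $C_0$-semigroup $\tilde S$, and (2) establishing $\omega_{ess}(A+K)=\omega_{ess}(A)$.

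For (1), I would start from the observation that $K$ is bounded from $D(A)$ (equipped with the graph norm) to $X$, so $A+K$ is closed with $D(A+K)=D(A)$; moreover, for any $\mu\in\rho(A)$ the operator $KR(\mu,A):X\to X$ is compact as the composition of the bounded map $R(\mu,A):X\to D(A)$ with the compact map $K:D(A)\to X$. The central analytic step is to show that $\|KR(\lambda,A)\|\to 0$ as $|\lambda|\to\infty$ within the sector $\Sigma_\theta$ of holomorphy of $A$. Granted this, $\Id-KR(\lambda,A)$ is invertible on $X$ with uniformly bounded inverse for $|\lambda|$ large in $\Sigma_\theta$, and the factorization
\[\lambda-(A+K)=\bigl(\Id-KR(\lambda,A)\bigr)(\lambda-A)\]
gives $\lambda\in\rho(A+K)$ with sectorial estimate $\|R(\lambda,A+K)\|=O(1/|\lambda|)$. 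The standard characterization of holomorphic generators then yields that $A+K$ generates a holomorphic $C_0$-semigroup $\tilde S$.

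For (2), I would prove the stronger statement that $\tilde S(t)-S(t)\in\KX$ for every $t>0$. This gives $\|\tilde S(t)\|_{ess}=\|S(t)\|_{ess}$, and applying the formula $\omega_{ess}(B)=\inf_{t>0}t^{-1}\log\|e^{tB}\|_{ess}$ (with the convention that $B$ itself is the generator, as in the theorem statement) to both $A$ and $A+K$ produces the desired equality. For compactness of the difference, I would use the Dunford contour representation
\[\tilde S(t)-S(t)=\frac{1}{2\pi i}\int_\Gamma e^{\lambda t}\bigl(R(\lambda,A+K)-R(\lambda,A)\bigr)\,d\lambda,\]
valid on a common sectorial contour $\Gamma\subset\rho(A)\cap\rho(A+K)$, together with the second resolvent identity
\[R(\lambda,A+K)-R(\lambda,A)=R(\lambda,A+K)\,K\,R(\lambda,A).\]
For an appropriate $\Gamma$ escaping to infinity into the left half-plane, the integrand is compact (because $KR(\lambda,A)$ is) with norm bounded by $|e^{\lambda t}|\cdot O(1/|\lambda|)$, which is integrable over $\Gamma$. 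The integral is therefore a norm-convergent integral of compact operators and lies in $\KX$.

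The main obstacle I foresee is the norm-decay $\|KR(\lambda,A)\|\to 0$ in part (1). It does not follow formally from the compactness of $K$ and the strong decay of $R(\lambda,A)$ to zero as an operator $X\to D(A)$: the general ``compact composed with strongly convergent'' principle yields only strong, not norm, convergence, as simple rank-one examples such as $T_n=\langle\cdot,e_n\rangle e_1$ on $\ell^2$ already show. The actual argument has to exploit the more specific structure here: the union $\bigcup_\lambda R(\lambda,A)B_X$ sits inside a single bounded subset of $D(A)$, so its $K$-image is contained in a fixed relatively compact subset of $X$; combining this collective compactness with the resolvent decay $\|R(\lambda,A)\|\le C/|\lambda|$ in $\LX$ and a contradiction argument precludes any non-zero accumulation value of $\|KR(\lambda,A)\|$. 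Once this quantitative step is secured, both the generation result in (1) and the norm-convergence of the contour integral in (2) follow essentially mechanically.
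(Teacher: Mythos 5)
The paper itself gives no proof of this statement (it is quoted from Desch--Schappacher via \cite[Theorem 3.7.25]{ABHN11}), so only the correctness of your argument is at issue, and there is a genuine gap: the ``central analytic step'' of part (1), namely $\|KR(\lambda,A)\|\to 0$ as $|\lambda|\to\infty$ in the sector, is false in general. Take $X=\ell^1$, $(Ay)_n=-ny_n$ with maximal domain (so $A$ generates a bounded holomorphic semigroup), and $Ky:=\bigl(\sum_k k\,y_k\bigr)e_1$, which is rank one and bounded for the graph norm, hence compact from $D(A)$ to $X$. Then $KR(\lambda,A)y=\bigl(\sum_k\tfrac{k}{\lambda+k}\,y_k\bigr)e_1$, so $\|KR(\lambda,A)\|_{{\mathcal L}(\ell^1)}=\sup_k\tfrac{k}{\lambda+k}=1$ for every $\lambda>0$ (and it stays $\geq 1$ throughout any sector). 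Note that in this example your two ingredients --- $\bigcup_{\lambda}R(\lambda,A)B_X$ bounded in $D(A)$, hence $K\bigl(R(\lambda,A)B_X\bigr)$ inside a fixed relatively compact set, together with $\|R(\lambda,A)\|=O(1/|\lambda|)$ --- both hold, so no contradiction argument based on them alone can yield the decay. The precise point where your sketch breaks is this: for $u_n=R(\lambda_n,A)x_n$ you do get $\|u_n\|_X\to 0$ and boundedness in $D(A)$, but this does not force $Ku_n\to 0$, because $K$ is continuous only for the graph norm and $Au_n=\lambda_nR(\lambda_n,A)x_n-x_n$ has no reason to tend to $0$ (in the example, with $x_n=e_n$, $\lambda_n=n$, one has $Au_n=-\tfrac12 e_n$ and $Ku_n=\tfrac12 e_1$ for all $n$). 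Consequently the Neumann-series route to inverting $\Id-KR(\lambda,A)$ is unavailable.

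The theorem is nevertheless true, and your plan can be repaired without changing its architecture: since $KR(\lambda,A)$ is compact, $\Id-KR(\lambda,A)$ is a Fredholm operator of index $0$, so it suffices to prove a uniform lower bound $\|x-KR(\lambda,A)x\|\geq c\|x\|$ for all $x$ and all large $|\lambda|$ in the sector; this gives invertibility with $\|(\Id-KR(\lambda,A))^{-1}\|\leq 1/c$ even though the norm of $KR(\lambda,A)$ is not small. The lower bound does follow from a compactness contradiction argument, but the decisive extra ingredient is the strong convergence $\lambda R(\lambda,A)\to\Id$ (uniform on the relatively compact set containing the vectors $Ku_n$): if $x_n-KR(\lambda_n,A)x_n\to 0$ with $\|x_n\|=1$, pass to a subsequence with $Ku_n\to z$, so $x_n\to z$, $\|z\|=1$; then $\|u_n\|_X\to 0$ and $Au_n=\lambda_nR(\lambda_n,A)x_n-x_n\to z-z=0$, hence $u_n\to 0$ in the graph norm, so $Ku_n\to 0$ and $z=0$, a contradiction. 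With this, your factorization $\lambda-(A+K)=(\Id-KR(\lambda,A))(\lambda-A)$ yields $\|R(\lambda,A+K)\|=O(1/|\lambda|)$ and generation of a holomorphic semigroup. Part (2) of your proposal is then correct as written: the contour integrand $e^{\lambda t}R(\lambda,A+K)KR(\lambda,A)$ is compact and of norm $O\bigl(|e^{\lambda t}|/|\lambda|\bigr)$ (this uses only boundedness, not smallness, of $KR(\lambda,A)$), so $\tilde S(t)-S(t)\in{\mathcal K}(X)$ for $t>0$, giving $\|\tilde S(t)\|_{ess}=\|S(t)\|_{ess}$ and hence $\omega_{ess}(A+K)=\omega_{ess}(A)$.
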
    
We will also need the following interpolation result which is of independent interest.
\begin{thm}\label{prop:3.3}
Let $A$ be the generator of a holomorphic $C_0$-semigroup $T$ on the Banach space $X$. Let $Y$ be a Banach space such that 
\[  D(A)\subset Y\hookrightarrow X . \]
Suppose that the part $B$ of $A$ in $Y$ generates a holomorphic $C_0$-semigroup $S$ on $Y$. Then $\omega_{ess} (A)=\omega_{ess}(B)$.  
\end{thm}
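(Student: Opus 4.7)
The plan is to realise $T(t)$ and $S(t)$, for $t>0$, as the two composition orders of a common factorisation through the inclusion $i\colon Y\hookrightarrow X$ and a regularising operator $T_Y(t)\colon X\to Y$. Once this is available, the classical equality of non-zero essential spectra of $DC$ and $CD$ yields $r_{ess}(T(t))=r_{ess}(S(t))$, and the formula $r_{ess}(T(t))=e^{t\omega_{ess}(A)}$ recalled earlier in this section finishes the argument.

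First I would construct the factorisation. Since $T$ is holomorphic, $T(t)X\subset D(A)\subset Y$ for every $t>0$, so $T_Y(t)\colon X\to Y$ defined by $T_Y(t)x:=T(t)x$ is well-defined. A closed graph argument shows $T_Y(t)\in\mathcal{L}(X,Y)$: if $x_n\to x$ in $X$ and $T_Y(t)x_n\to y$ in $Y$, then $T_Y(t)x_n\to y$ also in $X$ by continuity of $i$, while $T(t)x_n\to T(t)x$ in $X$, so $y=T(t)x$. By construction
\[ T(t)=i\circ T_Y(t)\qquad (t>0). \]
Because $B$ is the part of $A$ in $Y$ and generates the $C_0$-semigroup $S$, a standard uniqueness argument for the abstract Cauchy problem on $D(B)$, combined with density of $D(B)$ in $Y$, gives $S(t)=T(t)|_Y$ for all $t\geq 0$; rewritten for $t>0$ this reads
\[ S(t)=T_Y(t)\circ i. \]

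Next I would apply the classical Atkinson-type fact: for $C\in\mathcal{L}(X,Y)$, $D\in\mathcal{L}(Y,X)$ and $\mu\neq 0$, the operator $\mu\Id-DC\in\mathcal{L}(X)$ is Fredholm if and only if $\mu\Id-CD\in\mathcal{L}(Y)$ is Fredholm; this follows from the intertwining identities $(\mu\Id-CD)C=C(\mu\Id-DC)$ and $D(\mu\Id-CD)=(\mu\Id-DC)D$, which allow one to build a Fredholm inverse on either side from one on the other. Applied with $D=i$ and $C=T_Y(t)$, this yields
\[ \sigma_{ess}(T(t))\setminus\{0\}=\sigma_{ess}(S(t))\setminus\{0\}, \]
and therefore $r_{ess}(T(t))=r_{ess}(S(t))$ for every $t>0$ (if both essential spectra reduce to $\{0\}$ both radii vanish, and otherwise the maximum modulus is attained at a common nonzero point). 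Using $r_{ess}(T(t))=e^{t\omega_{ess}(A)}$ together with the analogous identity for $S$ and $B$, taking logarithms and dividing by $t$ gives $\omega_{ess}(A)=\omega_{ess}(B)$.

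The main obstacle is really the factorisation step: one needs the holomorphy of $T$ to place $T(t)X$ inside $Y$, and the closed graph theorem (using only continuity of $Y\hookrightarrow X$) to upgrade $T_Y(t)$ to an element of $\mathcal{L}(X,Y)$. Once this is in hand the rest reduces to the standard $CD$/$DC$ principle for Fredholm operators combined with the holomorphic spectral radius formula, so no quantitative control of $\|T_Y(t)\|_{\mathcal{L}(X,Y)}$ as $t\to\infty$ is needed.
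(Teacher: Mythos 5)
Your proof is correct, but it follows a genuinely different route from the paper. The paper argues by two separate inequalities: first a restriction lemma (if $T\in{\mathcal L}(X)$, $TX\subset Y\hookrightarrow X$, then $r_{ess}(T_{|Y})\leq r_{ess}(T)$, proved via a spectral decomposition and a Fredholm argument) gives $\omega_{ess}(B)\leq\omega_{ess}(A)$; the reverse inequality is obtained by a similarity trick, inserting $Z:=D(A)$ with the graph norm, noting that the part $A_1$ of $A$ in $Z$ generates $T(\cdot)_{|Z}$ which is similar to $T$, that $Z\hookrightarrow Y$ by the closed graph theorem, and applying the first step to the pair $(Y,Z)$ -- which is where the holomorphy of $S$ is used. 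You instead factor $T(t)=i\circ T_Y(t)$ and $S(t)=T_Y(t)\circ i$ through the embedding $i:Y\hookrightarrow X$ (using holomorphy of $T$ for $T(t)X\subset D(A)\subset Y$ and the closed graph theorem for $T_Y(t)\in{\mathcal L}(X,Y)$), and then invoke the classical Barnes/Atkinson principle that for $\mu\neq 0$ the operators $\mu\Id-DC$ and $\mu\Id-CD$ are simultaneously Fredholm; your parametrix sketch via the intertwining identities is sound (the explicit parametrix is $\frac{1}{\mu}\bigl(\Id_Y+CED\bigr)$ if $E$ is a parametrix of $\mu\Id-DC$, and it is worth writing this formula out). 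This buys you both inequalities at once, in fact the stronger statement $\sigma_{ess}(T(t))\setminus\{0\}=\sigma_{ess}(S(t))\setminus\{0\}$ for every $t>0$, and it does not use holomorphy of $S$ at all, only that $B$ generates a $C_0$-semigroup on $Y$; the paper's route, on the other hand, is self-contained (no $CD$--$DC$ theorem needed) and its Lemma~\ref{lem:3.5} yields the related inclusion recorded in Remark~\ref{rem:4.8}. The identification $S(t)=T(t)_{|Y}$, which you justify by uniqueness for the Cauchy problem plus density of $D(B)$ in $Y$, is handled in the paper by the exponential formula and $R(\lambda,B)=R(\lambda,A)_{|Y}$; either argument is fine.
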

Here $B$ is defined by 
\[  D(B):=\{  y\in D(A):Ay\in Y\} ,\quad By:=Ay. \] 
Note that if $\lambda\in \rho(A)$, then $\lambda\in \rho (B)$ and $R(\lambda, B)=R(\lambda, A)_{|Y}$. 

For the proof of Theorem~\ref{prop:3.3} we need some preparation. 
\begin{lem}\label{lem:3.4}
Let $T\in \LX$ and $Y\hookrightarrow X$. Assume that $TX\subset Y$ and let $S:=T_{|Y}\in \LY$. Then 
\[   \sigma(S)\subset   \sigma(T)\cup \{0\}. \]
\end{lem}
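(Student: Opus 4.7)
\medskip

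The plan is to show that for $\lambda \neq 0$ with $\lambda \in \rho(T)$, one has $\lambda \in \rho(S)$; equivalently, $\sigma(S)\setminus\{0\} \subset \sigma(T)$. Before that, I should verify that $S \in \LY$ is actually a bounded operator. The hypothesis $TX \subset Y$ together with $Y \hookrightarrow X$ gives a linear map $T:X\to Y$, and this map is closed: if $x_n \to x$ in $X$ and $Tx_n \to y$ in $Y$, then by continuity of the embedding $Tx_n \to y$ in $X$ as well, while $Tx_n \to Tx$ in $X$ by boundedness of $T$ on $X$, forcing $y = Tx \in Y$. By the closed graph theorem, $T:X\to Y$ is bounded, hence so is $S = T|_Y: Y \to Y$ (composition with the continuous inclusion $Y \hookrightarrow X$).

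Now fix $\lambda \neq 0$ with $\lambda \in \rho(T)$. For injectivity of $\lambda I - S$: if $y \in Y$ satisfies $\lambda y - Sy = 0$, then, viewing $y$ as an element of $X$, $\lambda y - Ty = 0$, so $y = 0$ since $\lambda I - T$ is injective on $X$. For surjectivity of $\lambda I - S$: given $y \in Y$, there is a unique $z \in X$ with $(\lambda I - T)z = y$. Rearranging, $\lambda z = y + Tz$; but $y \in Y$ and $Tz \in TX \subset Y$, so the right-hand side lies in $Y$. Since $\lambda \neq 0$, this forces $z \in Y$, and then $(\lambda I - S)z = (\lambda I - T)z = y$. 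Thus $\lambda I - S$ is a bijection on $Y$, and the open mapping theorem yields $(\lambda I - S)^{-1} \in \LY$, i.e.\ $\lambda \in \rho(S)$.

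The argument is elementary and the only mild obstacle is the bookkeeping between $X$ and $Y$: one must exploit $\lambda \neq 0$ precisely at the step where we divide by $\lambda$ to conclude $z \in Y$, which explains the necessary appearance of $\{0\}$ in the statement. The passage from $\rho(T)\setminus\{0\} \subset \rho(S)$ to the desired spectral inclusion $\sigma(S) \subset \sigma(T) \cup \{0\}$ is then just contraposition.
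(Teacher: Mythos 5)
Your proof is correct and follows essentially the same route as the paper: the key step, writing $\lambda z = y + Tz \in Y$ and dividing by $\lambda \neq 0$ to conclude that the resolvent $R(\lambda,T)$ maps $Y$ into $Y$, is exactly the paper's argument. Your additional verifications (boundedness of $S$ via the closed graph theorem and boundedness of the inverse via the open mapping theorem) merely make explicit details the paper leaves implicit.
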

\begin{proof}
Let $\lambda\in\rho(T)$, $\lambda\neq 0$, $y\in Y$, $x=R(\lambda, T)y$. Then $\lambda x-Tx=y$. Hence $\lambda x=y+Tx\in Y$. So $x\in Y$. Thus $\lambda\in \rho(S)$ and $R(\lambda, S)=R(\lambda, T)_{|Y}$. 	
\end{proof}	

\begin{lem}\label{lem:3.5}
	Let $T\in \LX$ and $Y\hookrightarrow X$. Assume that $TX\subset Y$ and let $S:=T_{|Y}\in \LY$. Then 
	\[   r_{ess}(S)\leq r_{ess}(T). \]
\end{lem}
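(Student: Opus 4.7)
The plan is to factor $T$ through the smaller space $Y$ and then compare powers of $S$ and $T$ through this factorization, paying attention to where compact perturbations live. Specifically, since $TX\subset Y$ and $Y\hookrightarrow X$ is continuous, the restricted codomain map $T_0:X\to Y$ (defined by $T_0 x=Tx$) is bounded: if $x_n\to x$ in $X$ and $Tx_n\to y$ in $Y$, then $Tx_n\to y$ in $X$ by continuity of the embedding, while also $Tx_n\to Tx$, so $y=Tx$, giving closed graph. Denote the embedding by $i:Y\to X$. Then $T=i\circ T_0$ on $X$ and $S=T_0\circ i$ on $Y$.

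Next, I would exploit the resulting telescoping identity
\[
S^{n+1}=(T_0 i)^{n+1}=T_0\,(i T_0)^n\, i=T_0\,T^n\, i .
\]
Given $\varepsilon>0$, the definition $r_{ess}(T)=\lim_n \|T^n\|_{ess}^{1/n}$ supplies, for all sufficiently large $n$, a compact operator $K_n\in\KX$ with $\|T^n-K_n\|_{\LX}\le (r_{ess}(T)+\varepsilon)^n$. Setting $\tilde K_n:=T_0\, K_n\, i:Y\to Y$, we see that $\tilde K_n$ is compact on $Y$, because $K_n$ is compact on $X$ and $T_0,i$ are bounded. Consequently
\[
\|S^{n+1}-\tilde K_n\|_{\LY}
=\|T_0(T^n-K_n)i\|_{\LY}
\le \|T_0\|\,\|i\|\,(r_{ess}(T)+\varepsilon)^n ,
\]
which gives $\|S^{n+1}\|_{Y,ess}\le C (r_{ess}(T)+\varepsilon)^n$ with $C=\|T_0\|\,\|i\|$. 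Taking $(n+1)$-th roots and letting $n\to\infty$ yields $r_{ess}(S)\le r_{ess}(T)+\varepsilon$, and then $\varepsilon\to 0$ gives the claim.

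The only place I expect any genuine friction is the verification that $T_0:X\to Y$ is bounded, since it is a little easy to overlook that the hypothesis $TX\subset Y$ by itself is an algebraic condition; the continuity follows from closed graph, which needs the continuous embedding $Y\hookrightarrow X$ to be injective (which is implicit in the notation $Y\hookrightarrow X$). Beyond this, the argument is purely multiplicative: the essential norm behaves well under compositions with bounded maps on either side because compactness is preserved under such compositions, and the spectral-radius type limit $r_{ess}(\cdot)=\lim_n\|\cdot^n\|_{ess}^{1/n}$ absorbs the constant $C$.
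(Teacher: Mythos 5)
Your proof is correct, but it follows a genuinely different route from the paper. The paper argues spectrally: for $r>r_{ess}(T)$ it uses the Riesz decomposition $X=X_1\oplus X_2$ with $\dim X_1<\infty$ and $r(T_{|X_2})<r$ (Proposition~\ref{prop:4.1}), passes to the invariant subspace $Y_2=X_2\cap Y$, invokes Lemma~\ref{lem:3.4} to get $r(S_{|Y_2})<r$, and then checks by hand that $\lambda\Id-S$ is Fredholm for $|\lambda|>r$ (kernel inside $\ker(\lambda\Id-T)$, range containing $Y_2$, which has finite codimension in $Y$). You instead factor $T=i\circ T_0$ and $S=T_0\circ i$, where $T_0:X\to Y$ is bounded by the closed graph theorem, exploit $S^{n+1}=T_0T^n i$, and run the Gelfand spectral radius formula for the essential norm in the Calkin algebra — which is legitimate here since the paper's Fredholm definition of $\sigma_{ess}$ coincides with the Calkin-algebra spectrum (Atkinson), an identification the paper itself records, so $r_{ess}(\cdot)=\lim_n\|\cdot^n\|_{ess}^{1/n}$. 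Your argument is shorter and avoids both the spectral projections and Lemma~\ref{lem:3.4}; all steps (boundedness of $T_0$, compactness of $T_0K_ni$, the root-and-limit computation) are sound. What the paper's longer route buys is the sharper conclusion recorded in Remark~\ref{rem:4.8}, namely the inclusion $\widetilde{\sigma}_{ess}(S)\subset\widetilde{\sigma}_{ess}(T)\cup\{0\}$ for the finer (isolated-point/finite-rank-projection) essential spectrum, which a pure norm-radius estimate does not directly deliver; your factorization could be pushed to spectral statements via the $\sigma(AB)\setminus\{0\}=\sigma(BA)\setminus\{0\}$ principle, but that would be an additional argument, not needed for the inequality as stated.
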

\begin{proof}
Let $r>r_{ess}(T)$. Then, by Proposition~\ref{prop:4.1}, the set $M:=\{  \lambda \in \sigma(T):|\lambda|>r \}$ is finite and consists of isolated eigenvalues with finite dimensional spectral projection. Thus $X=X_1\oplus X_2$ where $X_j$ is a closed subspace, $TX_j\subset X_j$  ($j=1,2$), $\dim X_1<\infty$ and $r(T_2)<r$ where $T_2=T_{|X_2}$. Let $S=T_{|Y}$ and note that $Y_2:=X_2\cap Y$ is a closed subspaces of $Y$ invariant by $S$. Let $S_2=S_{|Y_2}$. Then $T_2X_2\subset Y_2$. It follows from Lemma~\ref{lem:3.4} that 
$\sigma(S_2)\subset \sigma(T_2)\cup \{0\}$. Hence $r(S_2)\leq r(T_2)<r$. 

Now let $|\lambda|>r$. We show that $\lambda\Id -S$ is a Fredholm operator. It is clear that $\ker (\lambda\Id -S)\subset \ker (\lambda\Id -T)$ has finite dimension. Since $S_2$ is invertible, 
\[(\lambda\Id-T)Y\supset (\lambda\Id -S_2)Y_2=Y_2.\]
We show that $Y_2$ has finite codimension in $Y$ (which in turn, implies that $(\lambda\Id -T)Y$ has finite codimension in $Y$). Since $X_2$ has finite codimension there exist $\varphi_1,\cdots,\varphi_n\in X'$ such that 
\[    X_2=\bigcap_{j=1}^n\ker \varphi_j. \]
This implies that $Y_2=  \bigcap_{j=1}^n\ker {\varphi_j}_{|Y}$, which proves the claim.  
\end{proof}	
\begin{rem}\label{rem:4.8}
The proof of Lemma~\ref{lem:3.5} yields a stronger assertion, namely 
\[\widetilde{\sigma}_{ess}(S)\subset   \widetilde{\sigma}_{ess}(T)\cup\{0\},\]
where $\widetilde{\sigma}_{ess}(T)=\sigma(T)\backslash \{ \lambda\in \sigma(T):\lambda$ is an isolated point with spectral projection of finite rank$\}$.
\end{rem}
\begin{proof}[Proof of Theorem~\ref{prop:3.3}]
a) Since $T$ is holomorphic, $T(1)X\subset D(A)\subset Y$. One has $S(t)=T(t)_{|Y}$ (since $S(t)y=\lim_{n\to\infty}\left( \Id +\frac{t}{n}B\right)^{-n}y=T(t)y$). It follows from Lemma~\ref{lem:3.5} that 
\[ e^{\omega_{ess}(B)}=r_{ess}(S(1))\leq r_{ess}(T(1))=e^{\omega_{ess}(A)}.\]
 Hence $\omega_{ess}(B)\leq \omega_{ess}(A)$.  	\\
b) Consider $Z:=D(A)$ with the graph norm. Then $T_1(t):=T(t)_{|Z}$ is a $C_0$-semigroup which is similar to $T$. Its generator $A_1$ is the part of $A$ in $Z$. Because of the similarity we have $\omega_{ess}(A_1)=\omega_{ess}(A)$. It follows from the closed graph theorem that $Z\hookrightarrow Y$. Since $T_1(t)=T(t)_{|Z}=S(t)_{|Z}$, it follows from a) that $\omega_{ess}(A_1)\leq \omega_{ess}(B)$.    
\end{proof}	
Next we want to consider semigroups associated with  a form. Let $V,H$  be  Hilbert spaces over $\C$ such that $V\underset{d}{\hookrightarrow}H$. Let  $a:V\times V\to \C$ be a continuous sesquilinear form. As before we define an operator $A$ on $H$ by 
\[ D(A):=\{ u\in V:\exists f\in H,a(u,v)=\langle f,v\rangle_H\mbox{ for all }v\in V\},\;\; Au:=f. \]
We call $A$ \emph{the operator associated with $a$ (on $H$)} and write $A\sim a$. 

The form $a$ is  called \emph{H-elliptic} if there exist $w\geq 0$, $\alpha>0$ such that  
\[ \re a(u,u)   +w\|u\|_H^2\geq \alpha \|u\|_V^2\mbox{ for all }u\in V. \]
Note that a continuous $H$-elliptic form is the same as a closed sectorial form in the terminology of Kato \cite{Ka80}. 

If $a$ is continuous and $H$-elliptic, then the associated operator $A$  is \emph{m-sectorial}, i.e., $-A$ generates a holomorphic $C_0$-semigroup 
$S:\Sigma_\theta\to\LX$ satisfying 
 \[ \|S(t)\|\leq e^{w|z|} \mbox{ for all }z\in \Sigma_\theta  \]
 for some $w\in\R$ and where $\theta\in (0,\pi/2]$, $\Sigma_\theta =\{ re^{i\alpha} :r>0,|\alpha|<\theta  \} $.  Moreover, each m-sectorial operator can be obtained in this way (and the space $V$ as well as the form $a:V\times V\to\C$ such that $a \sim A$ are unique). We refer to \cite{Ka80}. 
 
 Moreover, there is a natural operator ${\mathcal A}$ on $V'$ associated with $a$, namely by defining $D({\mathcal A})=V$ and 
 \[  \langle \A u,v\rangle =a(u,v).\] 
 Then also $-\A$ is the generator of a holomorphic $C_0$-semigroup $\mathcal S$ on $V'$ (which might no longer be quasi-contractive, see \cite{Are04}). Moreover, ${\mathcal S}(t)_{|H}=S(t)\quad (t\geq 0)$. It follows from Theorem~\ref{prop:3.3} that $\omega_{ess} (\A)=\omega_{ess}(A)$. This will be needed in the next perturbation result. This in turn is crucial for characterizing those operators  which are associated with an essentially coercive form on $H$. 
 
 \begin{thm}\label{th:3.6}
 Let $V,H$ be complex Hilbert spaces such that $V\underset{d}{\hookrightarrow} H$   and let $a:V\times V\to\C$ be a continuous $H$-elliptic form, $A\sim a$. Let  $K:V\to V'$ be compact  and define $b:V\times V\to\C$ by 
 \[ b(u,v)=a(u,v)+\langle Ku,v\rangle .   \]
 \begin{enumerate}
 \item 	Then the form $b$ is continuous and  $H$-elliptic.  
 \item Let $B\sim b$.
 Then 
   $\omega_{ess}(B)=\omega_{ess}(A)$. 
 \end{enumerate}
 \end{thm}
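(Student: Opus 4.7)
The plan is to derive (1) from an Ehrling-type interpolation inequality for the compact perturbation $K$, and then to obtain (2) by combining the Desch--Schappacher perturbation theorem (applied to the associated operators on $V'$) with the interpolation-of-growth-bound result Theorem~\ref{prop:3.3}.

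For (1), continuity of $b$ is immediate. For $H$-ellipticity, I would first establish the following interpolation claim: for every $\eps>0$ there exists $C_\eps\ge 0$ with
\[
|\langle Ku,u\rangle|\le \eps\|u\|_V^2+C_\eps\|u\|_H^2\quad(u\in V).
\]
Taking $\eps=\alpha/2$ in the $H$-ellipticity $\re a(u,u)+w\|u\|_H^2\ge\alpha\|u\|_V^2$ and adding this inequality then gives $\re b(u,u)+(w+C_{\alpha/2})\|u\|_H^2\ge (\alpha/2)\|u\|_V^2$, so $b$ is $H$-elliptic. I would prove the claim by contradiction: if it fails, rescaling yields $\eps_0>0$ and $u_n\in V$ with $\|u_n\|_V=1$ and $|\langle Ku_n,u_n\rangle|>\eps_0+n\|u_n\|_H^2$. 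Boundedness of the left side by $\|K\|_{V\to V'}$ forces $\|u_n\|_H\to 0$. After extracting a subsequence with $u_n\rightharpoonup u$ in $V$, continuity of $V\hookrightarrow H$ gives $u_n\rightharpoonup u$ in $H$; combined with $u_n\to 0$ strongly in $H$, this yields $u=0$ in $H$, hence $u=0$ in $V$ by injectivity of the embedding. Compactness of $K:V\to V'$ then gives $\|Ku_n\|_{V'}\to 0$, so $|\langle Ku_n,u_n\rangle|\le\|Ku_n\|_{V'}\|u_n\|_V\to 0$, contradicting the lower bound $\eps_0$.

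For (2), I would lift everything to $V'$. Let $\A$ (respectively $\B$) denote the operator on $V'$ associated with the form $a$ (respectively $b$), both with domain $V$, so that $\B=\A+K$, with $K:V\to V'$ bounded. Since $a$ is continuous and $H$-elliptic, $-\A$ generates a holomorphic $C_0$-semigroup on $V'$ (as recalled just before the theorem), and the graph norm of $\A$ on $D(\A)=V$ is equivalent to $\|\cdot\|_V$; hence $K:D(\A)\to V'$ is compact. Theorem~\ref{th:3.2} then applies and yields $\omega_{ess}(\B)=\omega_{ess}(\A)$. Moreover $V=D(\A)\subset H\hookrightarrow V'$, and $A$ is the part of $\A$ in $H$; applying Theorem~\ref{prop:3.3} with $X=V'$ and $Y=H$ yields $\omega_{ess}(A)=\omega_{ess}(\A)$. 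Part (1) allows the same argument for $b$, giving $\omega_{ess}(B)=\omega_{ess}(\B)$. Chaining the three equalities gives $\omega_{ess}(A)=\omega_{ess}(B)$.

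The main obstacle is the interpolation inequality in (1): compactness of $K:V\to V'$ is not by itself of ``lower order'' with respect to $\|\cdot\|_V$, and the argument crucially exploits the injectivity of the continuous embedding $V\hookrightarrow H$ to convert weak $V$-convergence on a sequence with vanishing $H$-norm into weak $V$-convergence to $0$, which then collapses under the compactness of $K$. Once (1) is in hand, (2) is essentially a composition of Theorems~\ref{th:3.2} and~\ref{prop:3.3}.
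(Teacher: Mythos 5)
Your proposal is correct and follows essentially the same route as the paper: part (2) is exactly the paper's argument (Desch--Schappacher on $V'$ via Theorem~\ref{th:3.2}, then Theorem~\ref{prop:3.3} to transfer $\omega_{ess}$ from $\mathcal A$, $\mathcal A+K$ on $V'$ down to $A$, $B$ on $H$). For part (1) the paper runs the compactness/weak-convergence contradiction directly on the ellipticity inequality for $b$ rather than isolating your Ehrling-type estimate $|\langle Ku,u\rangle|\le\eps\|u\|_V^2+C_\eps\|u\|_H^2$, but the underlying argument is the same and your version is equally valid.
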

 \begin{proof}
 (1) We can assume that 
 \[  \re a(u,u)\geq \alpha \|u\|_V^2 \quad (u\in V),  \]
 where $\alpha>0$ (replacing $a$ by $a(.,.)+w\langle.,.\rangle_H$ otherwise). Assume that $b$ is not  $H$-elliptic. Then there exists $u_n\in V$ such that $\|u_n\|_V=1$ and 
 \begin{equation}\label{eq:3.4}
 \re a(u_n,u_n)+\re \langle K u_n,u_n\rangle +n\|u_n\|_H^2 <\frac{1}{n}.
 \end{equation} 	
 Passing to a subsequence, we may assume that $u_n\rightharpoonup u$ in $V$. Since $K:V\to V'$ is compact, it follows that $K u_n\to Ku$ in $V'$. Hence $\langle Ku_n,u_n\rangle \to \langle Ku,u\rangle$. Thus (\ref{eq:3.4}) implies that $\|u_n\|_H^2\to 0$ as $n\to\infty$. Since $V\underset{d}{\hookrightarrow} H$ it follows that $u=0$. Hence $\re \langle Ku_n,u_n \rangle \to 0$ as $n\to\infty$.  
 Thus (\ref{eq:3.4}) contradicts that  $\re a(u_n,u_n)\geq \alpha>0$ for all $n\in\N$. \\
 (2) Let $\mathcal T$ be the $C_0$-semigroup generated by $\A$ on $V'$, and $\mathcal S$ the semigroup generated by $\A+K$ on $V'$ (where $D(\A +K)=D(\A)=V$). It follows from  Theorem~\ref{th:3.2} that $\omega_{ess}(\A)=\omega_{ess}(\A +K)$. Moreover, by Theorem~\ref{prop:3.3}, $\omega_{ess}({\mathcal A})=\omega_{ess}(A)$ and $\omega_{ess}({\mathcal A}+K)=\omega_{ess}(B)$. Thus  $\omega_{ess}(A)=\omega_{ess}(B)$. 
 \end{proof}	
 
\section{Essentially positive coercive-forms and asymptotic compactness}\label{sec:6}
In this section we show that the semigroup associated with a  continuous elliptic form is asymptotically  compact if (and basically only if) the form is essentially positive-coercive. We first prove that essential positive coerciveness implies already ellipticity. This could be derived from \cite[Lemma 4.14]{AtEKS14} together with Theorem~\ref{th:54}.   However, the following proof is more direct. 

Throughout this section we consider  $V,H$  complex  Hilbert spaces such that $V\underset{d}{\hookrightarrow} H$ and $a:V\times V\to \C$ is sesquilinear and continuous.
\begin{prop}\label{prop:6.1}
 If $a$ is essentially positive-coercive, then $a$ is $H$-elliptic, i.e. there exist $\alpha>0$ and $w\geq 0$ such that 
 \[ \re a(u,u)+w\|u\|_H^2\geq \alpha \|u\|_V^2\mbox{ for all }u\in V.   \]
 \end{prop}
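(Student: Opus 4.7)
The plan is a direct proof by contradiction. Suppose $a$ were not $H$-elliptic. Then for every $\alpha > 0$ and $w \geq 0$ the inequality $\re a(u,u) + w\|u\|_H^2 \geq \alpha \|u\|_V^2$ must fail at some $u \in V$. Specializing to $\alpha = 1/n$ and $w = n$ (and normalizing) I would produce a sequence $u_n \in V$ with $\|u_n\|_V = 1$ such that
\[
\re a(u_n, u_n) + n\,\|u_n\|_H^2 < \frac{1}{n}.
\]
The whole proof will consist in showing that such a sequence contradicts essential positive-coerciveness, by verifying that $u_n \rightharpoonup 0$ in $V$ and $\limsup \re a(u_n,u_n) \leq 0$.

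First, since $a$ is continuous and $\|u_n\|_V = 1$, the quantity $\re a(u_n,u_n)$ is bounded (by $\|a\|$). The displayed inequality then forces
\[
n\,\|u_n\|_H^2 < \frac{1}{n} + \|a\|,
\]
so $\|u_n\|_H^2 \to 0$, i.e.\ $u_n \to 0$ in $H$. Because $\{u_n\}$ is bounded in $V$ I can extract a subsequence (not relabelled) with $u_n \rightharpoonup u$ weakly in $V$. Since $V \hookrightarrow H$ is continuous, the embedding is weakly continuous, so $u_n \rightharpoonup u$ in $H$ as well; combined with $u_n \to 0$ strongly in $H$ and the injectivity of $V \hookrightarrow H$, this gives $u = 0$. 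Hence $u_n \rightharpoonup 0$ in $V$.

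Second, from the construction one has $\re a(u_n,u_n) < \frac{1}{n} - n\|u_n\|_H^2 \leq \frac{1}{n}$, so $\limsup_{n\to\infty} \re a(u_n,u_n) \leq 0$. Essential positive-coerciveness now forces $\|u_n\|_V \to 0$, contradicting $\|u_n\|_V = 1$. This will conclude the proof.

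The argument is essentially a one-shot contradiction and I do not anticipate any serious obstacle; the only point deserving attention is the simultaneous choice of $\alpha \to 0$ and $w \to \infty$, which is the standard diagonal trick to simultaneously rule out $H$-ellipticity for all constants at once while keeping the resulting sequence $V$-bounded and $H$-null.
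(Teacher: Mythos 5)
Your argument is correct and is essentially the paper's own proof: the same contradiction sequence $\re a(u_n,u_n)+n\|u_n\|_H^2<\frac1n$ with $\|u_n\|_V=1$, the same deduction that $u_n\to 0$ in $H$ and hence (after passing to a weakly convergent subsequence, using reflexivity and the continuous injective embedding) $u_n\rightharpoonup 0$ in $V$, and the same appeal to essential positive-coerciveness to contradict $\|u_n\|_V=1$.
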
	
\begin{proof}
If $a$ is not $H$-elliptic, then there  exist $u_n\in V$ such that $\|u_n\|_V=1$ and 
\begin{equation}\label{eq:6.2}
\re a(u_n,u_n)+n\|u_n\|_H^2<\frac{1}{n}.
\end{equation} 	
The continuity of $a$ implies that  $\re a(u_n,u_n)\geq -M$. Hence by (\ref{eq:6.2}), 
\[  \|u_n\|_H^2 \leq  \frac{1}{n} \left( M+\frac{1}{n} \right) .\]
It follows that $\lim_{n\to\infty} u_n=0$ in $H$. Since $V$ is reflexive this implies that $u_n\rightharpoonup 0$ in $V$. Now (\ref{eq:6.2}) yields a contradiction to essential positive-coerciveness.  
\end{proof}	
Let $a$ be $H$-elliptic form with associated operator $A$ on $H$. Denote by $S$ the semigroup generated by $-A$ on $H$. Thus $S$ is holomorphic. We want to study the asymptotic behaviour of $S(t)$ as $t\to\infty$.   
\begin{rem}\label{rem:62}
If $w=0$, i.e.  if $a$ is positive-coercive, then $a$ is uniformly exponentially stable. In fact there exists $c_H>0$ such that 
\[  \|u\|_H\leq c_H \|u\|_V\mbox{ for all }u\in V.   \]
Choose $\varepsilon =\frac{\alpha}{c_H^2}$. Then 
\[   \re  a(u,u)-\varepsilon \|u\|_H^2\geq \alpha \|u\|_V^2-\varepsilon c_H^2 \|u\|_V^2=0\mbox{ for all }u\in V.  \]
Thus $A-\varepsilon\Id$ is accretive. It follows that the semigroup generated by $-A+\varepsilon\Id$, i.e. $(e^{\varepsilon t}S(t))_{t\geq 0}$ is contractive. Thus 
$\|S(t)\|\leq e^{-\varepsilon t}$ for all $t\geq 0$.  
\end{rem}	
Thus positive-coercive forms lead to exponentially stable semigroups. We show now that essentially positive-coercive forms generate 
asymptotically compact semigroups. 
\begin{thm}\label{th:6.3}
	Let $a$ be an essentially positive-coercive form. Let $A\sim a$ and denote by $S$ the semigroup generated by $-A$. Then $S$ is asymptotically compact. 
\end{thm}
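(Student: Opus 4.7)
The plan is to assemble three earlier results: the characterization of essential positive-coercivity as positive-coercivity up to a compact form perturbation (Theorem~\ref{th:54}), the trivial exponential stability of positive-coercive forms (Remark~\ref{rem:62}), and the invariance of the essential growth bound under compact form perturbations (Theorem~\ref{th:3.6}). Proposition~\ref{prop:6.1} already ensures that $a$ is $H$-elliptic, so $A$ generates a holomorphic $C_0$-semigroup and the hypotheses of Theorem~\ref{th:3.6} are in place.

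First I invoke the implication $(i)\Rightarrow(iv)$ of Theorem~\ref{th:54} to obtain a compact operator $K:V\to V'$ and some $\alpha>0$ such that the form
\[ b(u,v):=a(u,v)+\langle Ku,v\rangle \]
satisfies $\re b(u,u)\geq \alpha \|u\|_V^2$ for all $u\in V$; that is, $b$ is positive-coercive on $V$, hence in particular $H$-elliptic with constant $w=0$. Let $B\sim b$ and denote by $T$ the semigroup generated by $-B$. Because $b$ is positive-coercive, Remark~\ref{rem:62} yields an $\varepsilon>0$ with $\|T(t)\|\leq e^{-\varepsilon t}$ for every $t\geq 0$, so $\omega_{ess}(B)\leq \omega(B)\leq -\varepsilon<0$; the semigroup associated with $b$ is therefore already uniformly exponentially stable, and in particular asymptotically compact.

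To transfer this conclusion back to $a$, I write $a(u,v)=b(u,v)+\langle (-K)u,v\rangle$ and note that $-K:V\to V'$ is again compact. Applying Theorem~\ref{th:3.6} with $b$ in the role of the base form and $-K$ as the compact perturbation then gives $\omega_{ess}(A)=\omega_{ess}(B)<0$, which is exactly the asymptotic compactness of $S$. There is no serious obstacle here: the real work has been done in Theorems~\ref{th:54} and~\ref{th:3.6}. The only subtle point is that Theorem~\ref{th:3.6} is used in the ``reverse direction'', taking the better-behaved form $b$ as the base and recovering $a$ by a compact perturbation, which is legitimate because compactness of $K:V\to V'$ is preserved under negation.
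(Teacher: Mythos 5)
Your proof is correct and follows essentially the same route as the paper: apply Theorem~\ref{th:54} to find a compact $K:V\to V'$ making $b=a+\langle K\cdot,\cdot\rangle$ positive-coercive, use Remark~\ref{rem:62} for exponential stability of the associated semigroup, and transfer $\omega_{ess}<0$ back via the compact-perturbation result Theorem~\ref{th:3.6}. The only (harmless) difference is that you apply Theorem~\ref{th:3.6} with $b$ as the base form and $-K$ as the perturbation, while the paper takes $a$ (which is $H$-elliptic by Proposition~\ref{prop:6.1}) as the base form; since the theorem gives an equality of essential growth bounds, both directions are equally valid.
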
  
\begin{proof}
By Theorem~\ref{th:54} there exists a compact operator $K:V\to V'$ such that the form $b$ is positive-coercive, where 
\[ b(u,v)=a(u,v) +\langle K u,v \rangle \mbox{ for all }u,v\in V.  \]
Let $B\sim b$ on $H$ and let $T$ be the semigroup generated by $-B$. Then $T$ is uniformly exponentially stable by Remark~\ref{rem:62}. Thus $\omega_{ess}(T)<0$. It follows from Theorem~\ref{th:3.6}  that $\omega_{ess}(S)=\omega_{ess}(T)$. Thus $S$ is asymptotically compact.  	
\end{proof}	
\begin{cor}\label{cor:6.4}
	Let $A\sim a$ where $a$ is an essentially positive-coercive form. Then there exists $\varepsilon >0$ such that 
	\[  \sigma_{ess}(A)\subset \{ \lambda\in\C : \re (\lambda)\geq\varepsilon  \}.\]
	 In particular, each $\lambda\in\sigma(A)$ with $\re (\lambda)<\varepsilon$ is an isolated point of the spectrum with finite dimensional spectral projection. 
\end{cor}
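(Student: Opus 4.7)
The plan is to deduce both assertions from Theorem~\ref{th:6.3} by combining it with the spectral mapping property for the holomorphic semigroup $S$ generated by $-A$ and with Proposition~\ref{prop:4.1}.

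First, by Theorem~\ref{th:6.3} the semigroup $S$ is asymptotically compact, which by definition means $\omega_{ess}(A)<0$. I set $\varepsilon:=-\omega_{ess}(A)>0$; recall from Section~\ref{sec:4} that $r_{ess}(S(t))=e^{t\omega_{ess}(A)}=e^{-t\varepsilon}$ for every $t>0$. Now for any $\lambda\in\sigma_{ess}(A)$ I invoke the spectral mapping theorem for the essential spectrum, which is available here because $S$ is holomorphic and hence eventually norm-continuous (see \cite[A-III]{Na86}):
\[ \exp(-t\,\sigma_{ess}(A))\,\subset\, \sigma_{ess}(S(t))\setminus\{0\}\quad(t>0). \]
Thus $e^{-t\lambda}\in\sigma_{ess}(S(t))$ and therefore
\[ e^{-t\re\lambda}=|e^{-t\lambda}|\leq r_{ess}(S(t))=e^{-t\varepsilon}, \]
which yields $\re\lambda\geq\varepsilon$. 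This establishes the inclusion $\sigma_{ess}(A)\subset\{\re\lambda\geq\varepsilon\}$. Strictly speaking only the inclusion displayed above is needed, not the full spectral mapping equality; this inclusion in fact holds for any $C_0$-semigroup, being the Calkin-algebra analogue of the standard spectral inclusion.

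For the ``in particular'' clause, let $\omega:=\{\lambda\in\C:\re\lambda<\varepsilon\}$. By the first part $\omega\subset\rho_F(A)$, and $\omega$ is clearly open and connected. Since $a$ is $H$-elliptic by Proposition~\ref{prop:6.1}, the operator $A$ is m-sectorial; hence $\sigma(A)$ is contained in a translated sector and $\rho(A)$ contains some left half-line $(-\infty,-w)$. In particular $\omega\cap\rho(A)\neq\emptyset$, so Proposition~\ref{prop:4.1} applies to $\omega$ and shows that $\omega\cap\sigma(A)$ is discrete, each of its points being isolated with spectral projection of finite rank. This is precisely the statement of the corollary.

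The only delicate ingredient is the appeal to the spectral mapping theorem on the level of the Fredholm essential spectrum; everything else is a routine bookkeeping step using results already established ($H$-ellipticity from Proposition~\ref{prop:6.1}, the Fredholm-dichotomy Proposition~\ref{prop:4.1}, and the asymptotic compactness Theorem~\ref{th:6.3}).
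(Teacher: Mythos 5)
Your argument is correct in outcome, but it rests on one ingredient that the paper never establishes and that your justification treats too lightly: the inclusion $e^{-t\sigma_{ess}(A)}\subset\sigma_{ess}(S(t))$ for the Fredholm essential spectrum. Calling it ``the Calkin-algebra analogue of the standard spectral inclusion'' is not a proof, since the generator is unbounded and has no image in the Calkin algebra, and the appeal to eventual norm continuity is beside the point (the spectral mapping theorem in \cite{Na86} concerns the ordinary spectrum). The inclusion is nevertheless true, and the classical argument adapts: writing $B_\lambda(t)x=\int_0^t e^{-\lambda(t-s)}S(s)x\,ds$, one has $(e^{-\lambda t}-S(t))x=-(\lambda\Id-A)B_\lambda(t)x$ for all $x\in H$ and $(e^{-\lambda t}-S(t))x=-B_\lambda(t)(\lambda\Id-A)x$ for $x\in D(A)$; hence $\ker(\lambda\Id-A)\subset\ker(e^{-\lambda t}-S(t))$ and $\rg(e^{-\lambda t}-S(t))\subset\rg(\lambda\Id-A)$, so if $e^{-\lambda t}-S(t)$ is Fredholm then so is $\lambda\Id-A$. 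With that supplied, your estimate $e^{-t\re\lambda}\leq r_{ess}(S(t))=e^{t\omega_{ess}(A)}$ and the second half of your argument (Proposition~\ref{prop:6.1} gives $H$-ellipticity, hence m-sectoriality, hence a left half-line in $\rho(A)$, so Proposition~\ref{prop:4.1} applies to the half-plane $\{\re\lambda<\varepsilon\}$) are fine.

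Note, however, that the corollary can be obtained without any essential-spectrum mapping considerations, staying entirely inside Section~\ref{sec:4}: by Theorem~\ref{th:6.3} and Proposition~\ref{prop:n4.2}~b) one has $H=X_1\oplus X_2$ with $\dim X_1<\infty$ and $\|S(t)_{|X_2}\|\leq Me^{-\varepsilon t}$; by Hille--Yosida $\sigma(A_2)\subset\{\re\lambda\geq\varepsilon\}$, and for $\re\lambda<\varepsilon$ the operator $\lambda\Id-A=(\lambda\Id-A_1)\oplus(\lambda\Id-A_2)$ is Fredholm because $\lambda\Id-A_2$ is invertible and $X_1$ is finite dimensional; moreover any spectral point in that half-plane lies in $\sigma(A_1)$, a finite set, with finite-rank spectral projection. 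This is the more economical route and also yields the ``in particular'' clause directly. Finally, a small wrinkle in your normalization: $\varepsilon:=-\omega_{ess}(A)$ may be $+\infty$ (e.g.\ when the embedding $V\hookrightarrow H$ is compact), so you should either take any finite $\varepsilon\in(0,-\omega_{ess}(A))$ or treat that degenerate case (where $\sigma_{ess}(A)=\emptyset$) separately.
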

Next we answer the question which selfadjoint operators are associated with symmetric essentially coercive forms which remained open in Section~\ref{sec:3}.
\begin{cor}\label{cor:6.5}
Let $A$ be a closed operator on $H$. The following assertions are equivalent.
\begin{enumerate}
	\item[(i)] There exist $V\underset{d}{\hookrightarrow} H$ and a continuous, symmetric,   essentially positive-coercive sesquilinear form $a:V\times V\to \C$ such that $A\sim a$; 
	\item[(ii)] $A$ is selfadjoint and there exists $\varepsilon>0$ such that 
	\[  \sigma_{ess}(A)\subset \{  \lambda\in\C:\re (\lambda)\geq\varepsilon  \}.  \]  
\end{enumerate} 
In that case the form $a$ in (i) is unique.   
\end{cor}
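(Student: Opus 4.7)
The implication (i) $\Rightarrow$ (ii) should be almost immediate. I would note that essentially positive-coercive forms are \emph{a fortiori} essentially coercive, so Theorem~\ref{th:new26} (applied with $\lambda=0$) gives selfadjointness of $A$, while Corollary~\ref{cor:6.4} supplies the spectral bound $\sigma_{ess}(A)\subset\{\re\lambda\geq\varepsilon\}$. I would also point out that Proposition~\ref{prop:6.1} additionally forces $A$ to be semibounded below; this property should be viewed as an implicit companion of (ii), since the spectral-gap condition alone does not imply it (take $Ae_n=-ne_n$ on $\ell^2$, for which $\sigma_{ess}(A)=\emptyset$).

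For (ii) $\Rightarrow$ (i), the plan is to do a spectral cut and then use Kato's natural form construction. Using Proposition~\ref{prop:4.1} together with selfadjointness, $\sigma(A)\cap(-\infty,\varepsilon)$ consists of isolated eigenvalues with finite-dimensional spectral projections; combined with semiboundedness of $A$ this set is therefore finite. Let $H_-$ be the (finite-dimensional) sum of the corresponding eigenspaces and $H_+:=H_-^\perp$, so that $A$ splits as $A_-\oplus A_+$ with $A_+\geq\varepsilon$. Pick $w\geq 0$ with $A+w\geq 1$ and set
\[
V:=D((A+w)^{1/2}),\quad \|u\|_V^2:=\|(A+w)^{1/2}u\|_H^2,\quad a(u,v):=\langle(A+w)^{1/2}u,(A+w)^{1/2}v\rangle_H - w\langle u,v\rangle_H.
\]
Then $V\underset{d}{\hookrightarrow}H$, the form $a$ is continuous and symmetric, and $A\sim a$ by Kato's standard construction.

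The key remaining task is to check essential positive-coercivity of $a$. Since $P^\pm$ commute with $(A+w)^{1/2}$, they act boundedly on $V$, and $P^-$ has finite rank (hence is compact) on $V$. Using $A_+\geq\varepsilon$, a direct computation on $V_+:=P^+V$ yields
\[
a(u,u) \geq \frac{\varepsilon}{\varepsilon+w+1}\|u\|_V^2 \qquad (u\in V_+),
\]
and combining this with an elementary bound on the $P^-$-component gives $a(u,u)+M\|P^-u\|_H^2\geq\beta\|u\|_V^2$ for suitable $M,\beta>0$. This is precisely the finite-rank perturbation criterion (iii) of Theorem~\ref{th:54}, so $a$ is essentially positive-coercive. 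Uniqueness of $(V,a)$ then follows from the classical fact that a semibounded selfadjoint operator corresponds to a unique closed symmetric form: Proposition~\ref{prop:6.1} upgrades any candidate $a$ in (i) to being $H$-elliptic, hence closed, so it must coincide with the form built above.

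The main technical step I expect to be non-trivial is the verification of essential positive-coercivity, whose crux is the compactness of $P^-$ on $V$ — this is what converts the essential-spectrum gap into the required topological property of $a$. A more conceptual obstacle is the tacit need for semiboundedness of $A$ in (ii): without it, as the example above illustrates, the claimed equivalence actually fails.
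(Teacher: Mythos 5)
Your argument is correct, and you have in fact put your finger on a genuine gap in the statement as printed. Condition (ii) as written does not imply that $A$ is bounded below, and your example $Ae_n=-ne_n$ on $\ell^2$ is decisive: one has $\sigma_{ess}(A)=\emptyset$, so (ii) is vacuously satisfied, yet (i) must fail, since by Proposition~\ref{prop:6.1} an essentially positive-coercive symmetric form is $H$-elliptic and therefore forces $A$ to be semibounded below. The paper's own proof of $(ii)\Rightarrow(i)$ starts by asserting that $\sigma_\delta(A):=\{\lambda\in\sigma(A):\re\lambda\leq\delta\}$ is finite; this does \emph{not} follow from (ii) alone, since the discrete spectrum below $\delta$ may accumulate at $-\infty$ when $A$ is unbounded below. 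So either semiboundedness of $A$ is a tacit assumption in (ii), or (ii) should be replaced by the cleaner statement ``$A$ is selfadjoint and $\sigma(A)\cap(-\infty,\delta)$ is finite for some $\delta>0$.'' Your remark identifying this is exactly right and is the most valuable part of your write-up.

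As for the construction in $(ii)\Rightarrow(i)$, once semiboundedness is granted your route is correct but genuinely different from the paper's. The paper first removes the finitely many low-lying eigenvalues: it defines $Bu=Au-\sum_k\lambda_k\langle u,e_k\rangle e_k$ on $D(A)$, so that $B$ is selfadjoint with $\sigma(B)\subset[\delta,\infty)$, takes Kato's coercive form $b$ with $B\sim b$, and then sets $a=b+\sum_k\lambda_k\langle\cdot,e_k\rangle\langle e_k,\cdot\rangle$; essential positive-coercivity is then immediate since $a$ is a finite-rank perturbation of a coercive form. You instead build the natural form of $A$ directly from the functional calculus, $V=D((A+w)^{1/2})$ with $a(u,v)=\langle(A+w)^{1/2}u,(A+w)^{1/2}v\rangle-w\langle u,v\rangle_H$, and verify the Theorem~\ref{th:54} criterion by hand via the orthogonal splitting $V=V_-\oplus V_+$: the finite-rank projection $P^-$ commutes with $(A+w)^{1/2}$ and hence acts as a finite-rank projection on $V$, while $A_+\geq\varepsilon$ yields the coercivity estimate on $V_+$. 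By uniqueness the two constructions give the same $(V,a)$; the paper's version hands you the compact perturbation for free, while yours makes the role of the spectral gap and of the $V$-orthogonality of the splitting more explicit. Two minor points: the constant in your estimate on $V_+$ should be $\varepsilon/(\varepsilon+w)$ rather than $\varepsilon/(\varepsilon+w+1)$ (harmless, since any positive constant suffices), and the perturbation $M\|P^-u\|_H^2$ matches item (v) of Theorem~\ref{th:54} (with $Y=H$, $K=\sqrt{M}P^-$) rather than item (iii), though the equivalence (iii)$\Leftrightarrow$(v) makes this immaterial.
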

\begin{proof}
$(i)\Rightarrow (ii)$ follows from 	Corollary~\ref{cor:6.4} and the selfadjointness of $A$ from Proposition~\ref{prop:6.1} or Theorem~\ref{th:new26}. \\
$(ii)\Rightarrow (i)$: it follows from $(ii)$ that there exists $\delta>0$ such that $\sigma_{\delta}(A):=\{ \lambda\in\sigma(A):\re (\lambda)\leq \delta\}$ is finite and each $\lambda\in\sigma_{\delta} (A)$ is an eigenvalue with finite dimensional eigenspace (see also \cite[VII.3]{RS}).  Denote by $P$ the spectral projection associated with $\sigma(A)\cap(-\infty, \delta)$. Then $P$ is selfadjoint of finite rank. The space $PH$ has an orthonormal basis $e_1,...,e_n$ of eigenvectors of $A$, i.e., $e_k\in D(A)$ and $Ae_k = \lambda_k e_k$ for $k=1,...,n$. Let $B$ be defined on $D(B) = D(A)$ by  $Bu = Au - \sum_{k=1}^n \lambda_k \langle u,e_k\rangle_H e_k$. Then $B$ is selfadjoint and 
$\sigma(B) \subset  [\delta,\infty)$. Thus there exist a Hilbert space $V$, $V\underset{d}{\hookrightarrow} H$ and a coercive, continuous, symmetric form $b:V\times V\to\K$ such that $B \sim b$. Define $a:V\times V\to\K$  by
\[  a(u,v) = b(u,v)  +  \sum_{k=1}^n \lambda_k \langle u,e_k\rangle_H \langle e_k,v \rangle_H. \]
Then $A\sim a$. Uniqueness follows from the fact that for each m-sectorial operator  $A$ there is a unique closed form $a$ such that $A\sim a$, see \cite[VI. Theorem~2.7]{Ka80}. 
\end{proof}	 
\begin{cor}\label{cor:6.6}
Let $A$ be a closed operator on $H$. The following assertions are equivalent. 
\begin{enumerate}
	\item[(i)] There exist $V\underset{d}{\hookrightarrow}H$ and a  symmetric, essentially coercive form $a:V\times V\to\C$ such 
	that $A\sim a$.  
	\item[(ii)] The operator $A$ is selfadjoint and there exists $\delta>0$ such that 
	\[  \sigma_{ess}(A)\subset (\delta,\infty)\mbox{ or } \sigma_{ess}(A)\subset (-\infty,-\delta).    \]
\end{enumerate}
\end{cor}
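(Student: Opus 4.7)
The plan is to reduce Corollary~\ref{cor:6.6} to Corollary~\ref{cor:6.5} by passing through the structure theorem Theorem~\ref{th:erc} for essentially real-coercive forms. The crucial observation is that symmetry makes essential coercivity coincide with essential real-coercivity: if $a$ is symmetric then $a(u,u) = \overline{a(u,u)} \in \R$ for every $u\in V$, so that $a(u_n,u_n)\to 0$ is the same as $\re a(u_n,u_n)\to 0$.

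For $(i)\Rightarrow(ii)$, I would start from a symmetric essentially coercive form $a$ with $A\sim a$. By the observation above, $a$ is essentially real-coercive, so Theorem~\ref{th:erc} applies: either $a$ itself or $-a$ is essentially positive-coercive. In the first case, Corollary~\ref{cor:6.5} gives that $A$ is selfadjoint and there exists $\eps>0$ with $\sigma_{ess}(A)\subset\{\re\lambda\geq\eps\}$; selfadjointness forces $\sigma_{ess}(A)\subset\R$, so $\sigma_{ess}(A)\subset[\eps,\infty)\subset(\delta,\infty)$ for any $\delta<\eps$. In the second case, $-a$ is symmetric and essentially positive-coercive with $-A\sim -a$, so Corollary~\ref{cor:6.5} applied to $-A$ yields $\sigma_{ess}(-A)\subset[\eps,\infty)$, whence $\sigma_{ess}(A)\subset(-\infty,-\delta)$. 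Either way, $A$ is selfadjoint and $(ii)$ holds.

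For $(ii)\Rightarrow(i)$, I would split according to the two alternatives in $(ii)$. If $\sigma_{ess}(A)\subset(\delta,\infty)$, then Corollary~\ref{cor:6.5} (applied with $\eps=\delta$) directly produces $V\underset{d}{\hookrightarrow} H$ and a symmetric continuous essentially positive-coercive form $a:V\times V\to\C$ with $A\sim a$; since essential positive-coercivity implies essential coercivity, the form $a$ satisfies $(i)$. If instead $\sigma_{ess}(A)\subset(-\infty,-\delta)$, then $-A$ is selfadjoint with $\sigma_{ess}(-A)\subset(\delta,\infty)$, so Corollary~\ref{cor:6.5} furnishes a symmetric essentially positive-coercive form $b:V\times V\to\C$ with $-A\sim b$ on some $V\underset{d}{\hookrightarrow}H$. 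Then the form $a:=-b$ is continuous, symmetric, and satisfies $A\sim a$. One checks that $-b$ is essentially coercive directly from the definition: if $u_n\rightharpoonup 0$ in $V$ and $-b(u_n,u_n)\to 0$, then $b(u_n,u_n)\to 0$ and, by symmetry, $\limsup \re b(u_n,u_n)\leq 0$; essential positive-coercivity of $b$ yields $\|u_n\|_V\to 0$.

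The main technical load is all carried in advance: Theorem~\ref{th:erc} dichotomizes essentially real-coercive forms, and Corollary~\ref{cor:6.5} handles the positive-coercive case. There is no real obstacle here; the only subtle point to verify carefully is the bookkeeping between the closed strict half-planes appearing in Corollary~\ref{cor:6.5} (where the constant is $\eps$) and the open half-lines $(\delta,\infty)$, $(-\infty,-\delta)$ appearing in $(ii)$, which is a harmless slack in the constants.
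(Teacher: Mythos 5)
Your proof is correct and follows essentially the same route as the paper: observe that symmetry forces the numerical range into $\R$ so essential coercivity equals essential real-coercivity, invoke the dichotomy Theorem~\ref{th:erc} (which you cite correctly; the paper's reference to Theorem~\ref{th:54} at this point appears to be a slip) to get that $a$ or $-a$ is essentially positive-coercive, and then reduce to Corollary~\ref{cor:6.5}. The paper compresses all of this into two lines; you have simply filled in the case-splitting and the (harmless) bookkeeping between the half-plane condition of Corollary~\ref{cor:6.5} and the half-line condition of $(ii)$.
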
 
\begin{proof}
If $a$ is symmetric, then essentially coercive is the same as essentially real-coercive and this in turn is equivalent to $a$ or $-a$ being essentially positive-coercive by Theorem~\ref{th:54}. Now Corollary~\ref{cor:6.6} follows from Corollary~\ref{cor:6.5}.  	
\end{proof}	
A useful criterion for proving essential real-coerciveness is the following.
\begin{lem}\label{lem:6.7}
Let $a:V\times V\to\C$ be of the form $a=a_0-b$ where $a_0$ is a real-coercive form and $b$ is a continuous sesquilinear form satisfying 
\begin{equation*}\label{eq:6.3}
u_n\rightharpoonup 0\mbox{ in }V\Rightarrow \limsup_{n\to\infty}\re b(u_n,u_n)\leq 0.
\end{equation*} 
Then $a$ is essentially positive-coercive. 
\end{lem}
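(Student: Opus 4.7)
The plan is essentially a one-line estimate, once the hypotheses are put together correctly. The key algebraic identity is $\re a_0 = \re a + \re b$ (since $a = a_0 - b$), and the whole argument consists of combining positive-coercivity of $a_0$ on the left with the limsup-subadditivity on the right.

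First, unpack ``real-coercive''. By Proposition~\ref{prop:51} b), either $a_0$ or $-a_0$ is positive-coercive. The second possibility is incompatible with the conclusion: taking any orthonormal sequence $u_n$ in $V$ gives $u_n \rightharpoonup 0$ with $\|u_n\|_V = 1$ and $\re a_0(u_n,u_n)\leq -\alpha < 0$, and for $b=0$ this already produces $\limsup_n \re a(u_n,u_n)\leq 0$ with $\|u_n\|_V \not\to 0$. So I will assume (as must be intended) that there exists $\alpha>0$ with
\[
\re a_0(u,u) \geq \alpha\|u\|_V^2 \quad\text{for all } u\in V.
\]

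Now let $u_n \rightharpoonup 0$ in $V$ with $\limsup_{n\to\infty}\re a(u_n,u_n)\leq 0$. From $a_0 = a + b$,
\[
\alpha\|u_n\|_V^2 \;\leq\; \re a_0(u_n,u_n) \;=\; \re a(u_n,u_n) + \re b(u_n,u_n).
\]
Take $\limsup$ on both sides. Since each term on the right has $\limsup \leq 0$ (the first by hypothesis on $a$, the second by the assumed property of $b$), subadditivity of $\limsup$ (applicable because both limsups are finite, indeed nonpositive) yields
\[
\alpha\,\limsup_{n\to\infty}\|u_n\|_V^2 \;\leq\; \limsup_{n\to\infty}\re a(u_n,u_n) + \limsup_{n\to\infty}\re b(u_n,u_n) \;\leq\; 0.
\]
Hence $\|u_n\|_V \to 0$, establishing that $a$ is essentially positive-coercive.

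There is no real obstacle here: the single nontrivial point is the initial reduction from real-coercivity of $a_0$ to positive-coercivity, which is immediate from Proposition~\ref{prop:51} b) once one notes that the other sign case is directly ruled out by the desired conclusion. Everything beyond that is a three-line estimate.
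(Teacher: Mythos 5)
Your proof is correct and takes essentially the same route as the paper: the estimate $\alpha\|u_n\|_V^2 \leq \re a_0(u_n,u_n) = \re a(u_n,u_n)+\re b(u_n,u_n)$ followed by passing to the $\limsup$, with both terms on the right nonpositive. Your preliminary observation that the real-coercivity hypothesis must be read as positive-coercivity of $a_0$ (the case where $-a_0$ is positive-coercive being incompatible with the conclusion when $\dim V=\infty$, e.g.\ via $b=0$ and an orthonormal sequence) is a valid clarification of a point the paper's proof uses silently by writing $\re a_0(u_n,u_n)\geq \alpha\|u_n\|_V^2$.
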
 
\begin{proof}
Let $u_n\rightharpoonup 0$ such that $\limsup_{n\to\infty}\re a(u_n,u_n)\leq 0$. Then, for some $\alpha>0$, 
\begin{eqnarray*}
\limsup_{n\to\infty} \alpha \|u_n\|_V^2  & \leq & 	\lim_{n\to\infty} \re a_0(u_n,u_n)\\
 & = & \limsup_{n\to\infty} (\re a(u_n,u_n) +\re b(u_n,u_n))\\
  & \leq & 0.
\end{eqnarray*} 
\end{proof}	
\begin{rem}
If the injection $V\hookrightarrow H$ is compact, then   $a$ is $H$-elliptic if and only if $a$ is essentially positive-coercive  (see Proposition~\ref{prop:6.1} for one direction, the other is obvious). In that case, the associated semigroup $S$ consists of compact operators and so $\omega_{ess}(S)=-\infty$.  
\end{rem}
We now give an example to show how Theorem~\ref{th:6.3} can be applied. In this example, the embedding of $V$ in $H$ is not compact.
\begin{exam}\label{ex:6.9}
Let $\Omega=\R^d\backslash \overline{\omega}$ be an exterior domain where $\omega$ is a bounded Lipschitz domain and $d\geq 3$. Let $H=L^2(\Omega)$, $V=H^1(\Omega)$, and consider the form $a$ given by 
\[ a(u,v)=\int_{\Omega} \nabla u \overline{\nabla v} +\delta \int_{\Omega} u\overline{v} +\sum_{j=1}^d\int_{\Omega }(b_j u\overline{\partial_j v } +c_j \overline{v}\partial_j u) , \]
where the coefficients $b_j,c_j$ are complex-valued in $L^d(\Omega)\cap L_{loc}^{2q/(q-2)}(\Omega)$, where $2<q<2d/(d-2)$ and where $\delta>0$. 
Then the form $a$ is continuous and essentially positive-coercive. Let $A\sim a$, $S$ the semigroup generated by $-A$. Then $S$ is asymptotically compact. Note that $A$ is selfadjoint if $\overline{b_j}=c_j$ for $j=1,\cdots ,d$.   
\begin{proof}
Since $H^1(\Omega)\hookrightarrow L^{2d/(d-2)}(\Omega)$, the form $a$ is continuous. By Lemma~\ref{lem:6.7}	it suffices to show
  the following. Let $u_n\rightharpoonup 0$ in $H^1(\Omega)$. Then $\int_{\Omega} gu_n \overline{\partial_j u_n}\to 0$ as $n\to\infty$, where $g\in L^d (\Omega)\cap L_{loc}^{2q/(q-2)}(\Omega)$.   Let $\varepsilon >0$ and let $B$ be a ball large enough such that $\left( \int_{\Omega\backslash B } |g|^d\right)^{1/d}\leq \varepsilon$. Since  $(u_n)_n$ is bounded in $H^1(\Omega)$, there exists a constant $c>0$ such that $\|u_n\|_{L^{2d/(d-2)}}\leq c$ and $\|\partial_j u_n\|_{L^2}\leq c$ for all $n\in\N$.    Thus 
\begin{eqnarray*}
\left| \int_{\Omega\backslash B } g u_n \overline{\partial_j u_n} \right| & \leq & c\left(   \int_{\Omega\backslash B}|g u_n|^2 \right)^{1/2}\\
  & \leq  &	c \left(  \int_{\Omega\backslash B} |g|^{2\frac{d}{2}} \right)^{\frac{2}{d}\frac{1}{2}}    \left( \int_{\Omega} |u_n|^{2\frac{d}{d-2}}     \right)^{\frac{d-2}{d} \frac{1}{2}} \\
   & \leq & c^2 \varepsilon\mbox{ for all }n\in\N.
\end{eqnarray*}
Note that the embedding $H^1(\Omega\cap B)\hookrightarrow L^q(\Omega\cap B)$ is compact. 

Thus $u_n\to 0$ as $n\to\infty$ in $L^q(\Omega\backslash B)$. We estimate 
\begin{eqnarray*}
\left|    \int_{\Omega\cap B} gu_n \overline{ \partial_j u_n} \right|  & \leq & c {\left(  \int_{\Omega\cap B}|g u_n|^2  \right)}^{1/2}\\ 
 & \leq & c \left( \int_{\Omega\cap B} |g|^{2\frac{q}{q-2}}\right)^{\frac{q-2}{2}\frac{1}{2}} 
 \left(  \int_{\Omega\cap B} |u_n|^{2\frac{q}{2}}\right)^{\frac{2}{q}\frac{1}{2}} \\
  & \leq & c \|g\|_{L^{2q/(q-2)}(B\cap \Omega)} \|u_n\|_{L^q(\Omega\cap B)}
 \end{eqnarray*}
which converges to $0$ as $n\to\infty$. Thus 
\[  \limsup_{n\to\infty}\left|   \re \int_{\Omega} g u_n \overline{\partial_j u_n}\right|\leq c^2\varepsilon  \]
where $\varepsilon>0$ is arbitrary. This proves the claim. The remaining assertions follow from Theorem~\ref{th:6.3}.
\end{proof}
\end{exam}
\begin{rem}\label{rem:6.10}
Another condition on the coefficients is $b_j,c_j\in L^\infty(\Omega)$ and 
\[  \lim_{R\to\infty} (\|b_j\|_{L^\infty (\Omega\backslash B_R)} +\|c_j\|_{L^\infty (\Omega\backslash B_R)} ) =0, \]
where $B_R$ is the ball in $\R^d$ of radius $R$. The proof is similar.  
\end{rem}  
Our next goal is to prove the converse of Theorem~\ref{th:6.3}. We need the following result which is a consequence of a characterization of operators with bounded $H^\infty$-calculus due to C. Le Merdy \cite{LM98}. We refer also to the monography by M. Haase \cite[Sec. 7.3.3]{Haa06}. 
\begin{prop}\label{prop:6.11}
Let $S$ be a quasicontractive holomorphic $C_0$-semi\-group on a  Hilbert space $H$ whose generator is $-A$. If $\re \lambda >0$ for all $\lambda\in\sigma(A)$, then there exist a Hilbert space $V\underset{d}{\hookrightarrow} H$, a continuous real-coercive sesquilinear form $a:V\times V\to \C$, and there exists an equivalent scalar product $[.,.]$ on $H$ such that $A$ is associated with $a$ on $(H,[.,.])$.      
\end{prop}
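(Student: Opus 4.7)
The plan is to produce an equivalent scalar product on $H$ under which $A$ is m-sectorial of angle strictly less than $\pi/2$ with a strict accretivity bound, then apply Kato's first representation theorem, and finally combine the resulting $H$-ellipticity with strict accretivity to deduce real-coercivity.

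First I would establish the spectral gap $c:=\inf\{\re\lambda:\lambda\in\sigma(A)\}>0$. Quasi-contractivity yields $\omega\ge 0$ such that $e^{-\omega t}S(t)$ is a contractive holomorphic $C_0$-semigroup, so $A+\omega I$ is m-accretive and m-sectorial of some angle $\theta_0<\pi/2$, and $\sigma(A+\omega I)\subset\overline{\Sigma_{\theta_0}}$. A sequence $\lambda_n\in\sigma(A)$ with $\re\lambda_n\to 0$ cannot be bounded (a subsequential limit on $i\R$ would contradict the hypothesis together with closedness of $\sigma(A)$), and cannot be unbounded either: writing $\lambda_n+\omega=r_ne^{i\varphi_n}$ with $|\varphi_n|\le\theta_0$ forces $\re\lambda_n\ge r_n\cos\theta_0-\omega\to\infty$.

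Next I would invoke Le Merdy's theorem on the shifted operator $B:=A-\tfrac{c}{2}I$. Since $B+(\omega+\tfrac{c}{2})I=A+\omega I$ is m-sectorial of angle $<\pi/2$ on the Hilbert space $H$, and since $\sigma(B)\subset\{\re z\ge c/2\}$ lies in a proper sector with vertex at $0$ (an elementary trigonometric computation shows $|\arg\mu|\le\arctan((2+2\omega/c)\tan\theta_0)<\pi/2$ for every $\mu\in\sigma(B)$), the operator $B$ is sectorial on $H$ with sectoriality angle strictly less than $\pi/2$ and $0\in\rho(B)$, and inherits a bounded $H^\infty(\Sigma_\theta)$-functional calculus for some $\theta<\pi/2$ from $A+\omega I$ via standard results on Hilbert space. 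Le Merdy's theorem \cite{LM98} (cf.\ \cite[\S 7.3.3]{Haa06}) then produces an equivalent scalar product $[\,\cdot\,,\,\cdot\,]$ on $H$ with respect to which $-B$ generates a contractive holomorphic $C_0$-semigroup; equivalently, $B$ is m-accretive on $(H,[\,\cdot\,,\,\cdot\,])$, so that
\[
\re[Au,u]\ \ge\ \tfrac{c}{2}\,[u,u]\qquad (u\in D(A)),
\]
and $A$ is m-sectorial of angle $<\pi/2$ on $(H,[\,\cdot\,,\,\cdot\,])$.

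By Kato's first representation theorem (\cite[Ch.~VI]{Ka80}) there exist a Hilbert space $V\underset{d}{\hookrightarrow}(H,[\,\cdot\,,\,\cdot\,])$ and a unique continuous, closed sectorial form $a:V\times V\to\C$ with $A\sim a$, together with constants $w\ge 0$, $\alpha>0$ such that $\re a(u,u)+w[u,u]\ge\alpha\|u\|_V^2$ on $V$. Since $D(A)$ is a form core, the strict accretivity extends by continuity to $\re a(u,u)\ge (c/2)[u,u]$ for all $u\in V$. Multiplying this inequality by $2w/c$ and adding to the ellipticity inequality eliminates the $[u,u]$ term and yields $(1+2w/c)\re a(u,u)\ge\alpha\|u\|_V^2$, hence $\re a(u,u)\ge\alpha'\|u\|_V^2$ with $\alpha':=\alpha c/(c+2w)>0$. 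Thus $a$ is positive-coercive, in particular real-coercive. The main obstacle is Step 2: establishing that $B$ possesses a bounded $H^\infty$-calculus of angle strictly less than $\pi/2$ (so that Le Merdy's theorem applies), since $H^\infty$-calculus is sensitive to the location of the sector vertex, and the spectral gap from Step 1 is what secures the sectoriality of $B$ at the origin.
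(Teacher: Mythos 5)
Your overall route is the same as the paper's: establish a spectral gap, shift $A$ down by a positive amount smaller than the gap, use Le Merdy's similarity theorem to obtain an equivalent scalar product for which the shifted operator generates a sectorially contractive semigroup, apply Kato's representation theorem, and finally combine $H$-ellipticity with the strict accretivity to get $\re a(u,u)\ge \alpha'\|u\|_V^2$. Your closing algebraic step (multiplying the extended accretivity inequality by $2w/c$ and adding it to the ellipticity inequality) is a correct, slightly more elementary variant of the paper's argument, which instead shows that $\|u\|_1^2:=\re a(u,u)$ is a complete, hence equivalent, norm on $V$; both give positive-coercivity.

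The gap is exactly the step you yourself flag: the claim that $B=A-\tfrac{c}{2}\Id$ ``inherits a bounded $H^\infty(\Sigma_\theta)$-calculus for some $\theta<\pi/2$ from $A+\omega \Id$ via standard results on Hilbert space'' is not standard, and it is the crux of the proof. What is standard on Hilbert space is that the m-accretive operator $A+\omega\Id$ has a bounded $H^\infty$-calculus of angle larger than $\pi/2$, that McIntosh's theorem lets one lower the angle down to the sectoriality angle, and that bounded $H^\infty$-calculus is equivalent to $BIP$. What is not automatic is the downward translation: a bounded $H^\infty$-calculus (or $BIP$) for $A+\omega\Id$ does not transfer by general principles to $A-\varepsilon\Id$, even though the spectral gap makes the latter invertible and sectorial of angle less than $\pi/2$; sectoriality of $B$ at the origin yields resolvent estimates but not the calculus. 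The paper fills precisely this hole by quoting \cite[Corollary 2.4]{ABH01}: since $A+w\Id$ is m-accretive one has $A+w\Id\in BIP(H)$, and that corollary gives $A-\varepsilon\Id\in BIP(H)$, after which Le Merdy's theorem produces the equivalent scalar product. As written, your proposal identifies this obstacle but does not overcome it, so its pivotal step remains unproven; invoking the shift theorem for $BIP$ (or an equivalent permanence result for the $H^\infty$-calculus under spectral translation) is what is needed to complete the argument.
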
  
Note that the assertion in the proposition says that 
\[  D(A)=\{   u\in V:\exists f\in H \mbox{ such that }a(u,v)=[f,v]\mbox{ for all  }v\in V\} , \, Au=f.  \] 
To say that $[.,.]$ is an \emph{equivalent scalar product} means that $u\mapsto \sqrt{[u,u]}$ defines an equivalent norm on $H$.

\begin{proof}[Proof of Proposition~\ref{prop:6.11}]
Since $\re \lambda>0$ for all $\lambda\in \sigma(A)$ and since the spectrum of $A$ lies in a sector, there exists $\varepsilon>0$ such that $\re (\lambda)\geq 2\varepsilon$ for all $\lambda\in\sigma(A)$. This implies that $-A+\varepsilon\Id$ generates a holomorphic $C_0$-semigroup $T$ which is bounded on a  sector.   Since $A+w\Id$ is $m$-accretive, it follows that $A+w\Id\in BIP(H)$ (see for example \cite{LM98} for the definition of $BIP(H)$). By \cite[Corollary 2.4]{ABH01}, it follows that $A-\varepsilon\Id \in BIP(H)$. Now it follows from a result of Le Merdy \cite[Theorem 1.1 and Corollary 4.7]{LM98} that there exists an equivalent scalar product $[.,.]$ on $H$ such that $-A+\varepsilon\Id$ generates a holomorphic $C_0$-semigroup $T$ which is contractive for $[.,.]$ on a sector. By a result of Kato \cite[VI § 2, Theorem 2.7]{Ka80} there are a Hilbert space $V\underset{d}{\hookrightarrow} H$, a continuous $H$-elliptic form $b:V\times V\to\C$  and $\theta\in [0,\pi/2)$ such that $A-\varepsilon\Id$ is associated with $b$ on $(H,[.,.])$. Then $A$ is associated with $a:=b+\varepsilon [.,.]$ on $(H,[.,.])$. Since $\re b(u,u)\geq 0$ for all $u\in V$,
\[ \|u\|_1^2:=\re a(u,u)=\re b(u,u)+\varepsilon \|u\|_H^2  \]
defines a norm on $V$. Since $b$ is $H$-elliptic, this norm is complete. Thus $\|\;\|_1$ is an equivalent norm on $V$. Hence there exists $\alpha>0$ such that 
\[ \re a(u,u) =\|u\|_1^2\geq \alpha \|u\|_V^2; \]
i.e. $a$ is real-coercive.            	
\end{proof}	
\begin{thm}\label{th:6.12}
	Let $S$ be an asymptotically compact quasi-contractive holomorphic $C_0$-semigroup on $H$ with generator $-A$. Then there exist an equivalent scalar product $[.,.]$ on $H$, a Hilbert space $V\underset{d}{\hookrightarrow}H$ and a continuous, essentially positive-coercive form $a:V\times V\to\C$ such that $A\sim a$ on $(H,[.,.])$.    
\end{thm}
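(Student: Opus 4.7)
The plan is to combine Proposition~\ref{prop:n4.2} with Proposition~\ref{prop:6.11}: I will split $H$ into a finite-dimensional piece and a uniformly exponentially stable piece, apply Proposition~\ref{prop:6.11} to the stable part, treat the finite-dimensional part trivially, and reassemble. By Proposition~\ref{prop:n4.2} there is a decomposition $H=X_1\oplus X_2$ into closed $S$-invariant subspaces with $\dim X_1<\infty$ and $S_2:=S|_{X_2}$ uniformly exponentially stable. Denote by $-A_j$ the generator of $S_j=S|_{X_j}$: then $A_1\in{\mathcal L}(X_1)$, while $S_2$ inherits holomorphy and quasi-contractivity (in the norm induced from $H$) from $S$ because $X_2$ is closed and invariant, and consequently $\sigma(A_2)\subset\{\re\lambda\geq\varepsilon\}$ for some $\varepsilon>0$.

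Next I apply Proposition~\ref{prop:6.11} to $A_2$ on $X_2$: its proof actually establishes the stronger bound $\re b(u,u)\geq\alpha\|u\|_{V_2}^2$, so I obtain an equivalent scalar product $[.,.]_2$ on $X_2$, a Hilbert space $V_2\underset{d}{\hookrightarrow}X_2$, and a continuous positive-coercive form $b:V_2\times V_2\to\C$ with $A_2\sim b$ on $(X_2,[.,.]_2)$. On the finite-dimensional side I set $V_1:=X_1$, fix any scalar product $[.,.]_1$ on $X_1$, and put $a_1(u,v):=[A_1u,v]_1$, which gives $A_1\sim a_1$ on $(X_1,[.,.]_1)$. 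Let $P_1,P_2$ denote the bounded projections associated with $H=X_1\oplus X_2$. I then take $V:=V_1\oplus V_2$ as Hilbert direct sum, define the new scalar product on $H$ by $[u,v]:=[P_1u,P_1v]_1+[P_2u,P_2v]_2$, and the form by $a(u,v):=a_1(P_1u,P_1v)+b(P_2u,P_2v)$ for $u,v\in V$. Equivalence of $[.,.]$ with $\langle.,.\rangle_H$ follows from boundedness of the $P_j$ together with the fact that each $[.,.]_j$ is equivalent to the restriction of $\langle.,.\rangle_H$ to $X_j$; density of $V$ in $(H,[.,.])$ follows from density of each $V_j$ in $X_j$. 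Testing the identity $a(u,v)=[f,v]$ against $v\in V_1$ and against $v\in V_2$ decouples it into the defining identities for $a_1$ and $b$, so $A\sim a$ on $(H,[.,.])$.

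Finally I verify essential positive-coerciveness of $a$. Let $u_n\rightharpoonup 0$ in $V$ with $\limsup_n\re a(u_n,u_n)\leq 0$, and write $u_n=u_n^1+u_n^2$ with $u_n^j\in V_j$. Continuity of the coordinate projections in $V$ gives $u_n^j\rightharpoonup 0$ in $V_j$; since $V_1$ is finite-dimensional this forces $u_n^1\to 0$ in $V_1$, hence $\re a_1(u_n^1,u_n^1)\to 0$ by continuity of $a_1$. Consequently $\limsup_n\re b(u_n^2,u_n^2)\leq 0$, and positive-coerciveness of $b$ forces $\|u_n^2\|_{V_2}\to 0$, whence $\|u_n\|_V\to 0$. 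The principal obstacle in this approach is essentially bookkeeping: verifying rigorously that after the gluing one really has $A\sim a$ in the rescaled Hilbert space $(H,[.,.])$ and that $V\underset{d}{\hookrightarrow}(H,[.,.])$. Once that is settled, essential positive-coerciveness is automatic from the finite-dimensionality of $V_1$ together with the positive-coerciveness of $b$.
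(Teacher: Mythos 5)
Your proposal is correct and takes essentially the same route as the paper: decompose $H=X_1\oplus X_2$ via Proposition~\ref{prop:n4.2}, apply Proposition~\ref{prop:6.11} to the exponentially stable part $A_2$, pair it with the trivial bounded form on the finite-dimensional $X_1$, and glue; your direct $\limsup$ verification of essential positive-coerciveness replaces the paper's appeal to criterion (ii) of Theorem~\ref{th:54}, but the underlying reason is the same.
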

 \begin{proof}
 Since $S$ is quasi-compact, $H=X_1\oplus X_2$ where $X_j$ are closed invariant subspaces, $\dim X_1<\infty$ and 
 \[   \|S_2(t)\|\leq Me^{-\varepsilon t} \mbox{ for all }t\geq 0\]
 for some $\varepsilon >0$, $M\geq 1$, where $S_2(t):=S(t)_{|X_2}$. It follows   from Proposition~\ref{prop:6.11} that there exist an equivalent scalar product $[.,.]_2$ on $X_2$, $V_2\underset{d}{\hookrightarrow}X_2$, $a_2:V_2\times V_2\to\C$ a continuous, real-coercive sesquilinear form	such that $a_2\sim A_2$ on $(X_2,[.,.])$, where $-A_j$ is the generator of $S(.)_{|X_j}$, $j=1,2$. 
 
 Note that $A_1$ is a bounded operator. 
 Defining 
 \[V:=X_1\oplus V_2\mbox{ and }[x_1+x_2,y_1+y_2]:= \langle x_1,y_1\rangle_H+ [x_2,y_2]_2\mbox{ on }H\]
 where $x_1+x_2,y_1+y_2\in X_1\oplus X_2$ and defining 
 \[ a(x_1+x_2,y_1,y_2) =\langle A_1x_1,y_1\rangle_H+a_2(x_2,y_2) \mbox{ for }x_1+x_2,y_1+y_2\in X_1\oplus V_2, \]
 one obtains an essentially positive-coercive form on $V$ (Theorem~\ref{th:54} (ii) is fulfilled). Then $A\sim a$ on $(H,[.,.])$
\end{proof}

\section{Essentially coercive forms and holomorphic semigroups}\label{sec:7}
The purpose of this section is to give some  
  information on the operator associated with an essentially coercive form using our results from Section~\ref{sec:6}. This will allow us to prove a converse version of Theorem~\ref{th:new29}.  Throughout the section, $V,H$ are complex Hilbert spaces, $V\underset{d}{\hookrightarrow}H$  and $a:V\times V\to\C$ is a continuous and sesquilinear form.  
As previously, we denote by $A$ the operator on $H$ associated with $a$.
\begin{thm}\label{th:7.1}
If $a$ is essentially coercive, then there exists $\theta\in\R$ such that $e^{i\theta}A$ is the negative generator of a holomorphic $C_0$-semigroup, which is quasi-contractive on a sector and which is asymptotically compact.  In particular, $D(A)$ is dense in $V$.  
\end{thm}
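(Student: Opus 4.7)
The plan is to reduce the essentially coercive case to the essentially positive-coercive case, and then invoke the structure results already proved in Sections \ref{sec:5} and \ref{sec:6}.

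First, by Proposition \ref{prop:53} there exists $\theta\in\R$ such that the form $\tilde a:=e^{i\theta}a$ is essentially positive-coercive. Note that the operator on $H$ associated with $\tilde a$ is precisely $e^{i\theta}A$, so it suffices to prove the conclusions of the theorem for $\tilde a$ and its associated operator $\tilde A=e^{i\theta}A$. From now on I replace $a$ by $\tilde a$ and work under the assumption that $a$ itself is essentially positive-coercive.

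Next, Proposition \ref{prop:6.1} gives constants $\alpha>0$ and $w\geq 0$ such that $\re a(u,u)+w\|u\|_H^2\geq \alpha\|u\|_V^2$ for all $u\in V$; in other words, $a$ is $H$-elliptic. The classical Kato theory recalled in Section \ref{sec:6} then applies: $a$ is continuous and $H$-elliptic, hence closed and sectorial, so the associated operator $A$ is m-sectorial and $-A$ generates a holomorphic $C_0$-semigroup $S$ which is quasi-contractive on some sector $\Sigma_{\theta'}$ around $\R_+$. Finally, Theorem \ref{th:6.3} applied to the essentially positive-coercive form $a$ shows that $S$ is asymptotically compact. This settles the generation, quasi-contractivity, and asymptotic compactness assertions.

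It remains to show that $D(A)$ is dense in $V$. I would use the standard Lax-Milgram argument for $H$-elliptic forms. The form $a_w(u,v):=a(u,v)+w\langle u,v\rangle_H$ is coercive on $V$, so by Lax-Milgram the operator ${\mathcal A}_w:V\to V'$ defined by $\langle {\mathcal A}_w u,v\rangle=a_w(u,v)$ is an isomorphism. Since $V\underset{d}{\hookrightarrow} H$, we have $H\underset{d}{\hookrightarrow}V'$ (via the standard identification), and $D(A+w\Id)={\mathcal A}_w^{-1}(H)$. Given $v\in V$, write $f:={\mathcal A}_w v\in V'$ and approximate it by a sequence $f_n\in H$ with $f_n\to f$ in $V'$; then $u_n:={\mathcal A}_w^{-1}f_n\in D(A)$ and $u_n\to v$ in $V$. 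This proves the density of $D(A)$ in $V$.

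There is no real obstacle here: the theorem is essentially a packaging of results already established, and the only non-immediate ingredient is the density of $D(A)$ in $V$, which is a routine consequence of $H$-ellipticity combined with density of $H$ in $V'$.
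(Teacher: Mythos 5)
Your proof is correct and follows essentially the same route as the paper: Proposition~\ref{prop:53} to reduce to the essentially positive-coercive case, Proposition~\ref{prop:6.1} to get $H$-ellipticity, then Kato's theory for m-sectoriality; the paper's proof simply cites ``as is well-known'' for the density of $D(A)$ in $V$ and leaves the asymptotic compactness implicit (it is Theorem~\ref{th:6.3}), whereas you spell out both the Lax-Milgram argument for density and the appeal to Theorem~\ref{th:6.3}, which are the right ingredients.
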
 
\begin{proof}
There exists $\theta\in\R$ such that $e^{i\theta}a$ is essentially positive-coercive (Proposition~\ref{prop:53}). By Proposition~\ref{prop:6.1}, the form $e^{i\theta}a$ is $H$-elliptic. Thus its associated operator  $e^{i\theta} A$ is m-sectorial. As is well-known, this implies that $D(e^{i\theta}A)=D(A)$ is dense in $V$.   	
\end{proof}	
If $a$ is accretive, we know from Subsection~\ref{sec:new4} that $-A$ itself generates a contractive $C_0$-semigroup $S$. In general $S$ is not holomorphic. Here is an example. 
\begin{exam}\label{ex:7.2}
Let $H=\ell^2$, $V=\{ x\in\ell^2: \sum_{n=1}^{\infty} \lambda_n |x_n|^2<\infty \}$ where 
\[0<\lambda_1\leq \cdots\leq \lambda_n\leq \lambda_{n+1}\to\infty\]
equipped with the norm
\[ \|x\|_V^2 =\sum_{n=1}^\infty \lambda_n|x_n|^2.  \]
Let $a(u,v)=\sum_{n=1}^{\infty} i\lambda_n u_n\overline{v_n}$. Then $\re a(u,u)=0$, so $a$ is  accretive. One has $S(t)u=(e^{-i\lambda_n t}u_n)_{n\in\N}$. Thus $S$ is not holomorphic. In fact $S$ extends to a unitary group.   
\end{exam}
However, the semigroup $S$ associated with an essentially coercive,  continuous and  accretive form is always the boundary of a holomorphic $C_0$-semigroup.  To explain this result in detail, we recall the following.
 
For $\theta\in (0,\pi/2)$, as previously, let $\Sigma_{\theta} =\{ re^{i\alpha} :r>0,|\alpha|<\theta  \} $ and  
let $S:\Sigma_{\theta}\to\LX$ be a holomorphic $C_0$-semigroup with generator $-A$. 
If 
\begin{equation}\label{eq:7.1}
\sup_{z\in\Sigma_{\theta}, |z|\leq 1} \| S(t)\|<\infty,
\end{equation}
then there exists a strongly continuous extension $ \overline{S}:\overline{\Sigma_{\theta}}\to\LX$ and $S_{\pm \theta}:=(\overline{S}(te^{\pm i\theta})_{t\geq 0}$ are $C_0$-semigroups with generator $-e^{\pm i\theta}A$. We call $S_{\pm \theta}$ the boundary semigroups of $S$. Conversely, if $A$ is a closed operator and $\theta\in (0,\pi/2)$ such that $- e^{\pm i\theta}A$ generate $C_0$-semigroups $S_{\pm}$, then $-A$ generates a holomorphic $C_0$-semigroup $S:\Sigma_{\theta}\to \LX$ where $0<\theta<\pi/2$ such that (\ref{eq:7.1}) holds and $S_{\pm}$ are the boundary semigroups of   $S$. We refer to \cite[Sec. 39]{ABHN11} or \cite{AEM97}  for this and for further information. Now we can formulate a result which is a converse version of 
Theorem~\ref{th:new29}.
\begin{thm}\label{th:7.3}
	Assume that $V\neq H$, $V\underset{d}{\hookrightarrow} H$,  $a:V\times V\to \C$ is continuous, essentially coercive and  accretive. Let $A\sim a$ and denote by $S$ the  $C_0$-semigroup generated by $A$. Then $S$ is holomorphic or $S$ is the boundary semigroup of a holomorphic $C_0$-semigroup.     
\end{thm}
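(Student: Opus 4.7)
The plan is to combine two independent consequences of the hypotheses and reconcile them through a rotation.

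First, since $a$ is accretive and essentially coercive, Theorem~\ref{th:new29} (applied with $\lambda=0$) gives that $A$ is m-accretive, so $-A$ generates a contractive $C_0$-semigroup $S$. Simultaneously, Proposition~\ref{prop:53} yields $\theta_0\in\R$ such that $e^{i\theta_0}a$ is essentially positive-coercive, hence $H$-elliptic by Proposition~\ref{prop:6.1}; thus $-e^{i\theta_0}A$ generates a holomorphic, quasi-contractive $C_0$-semigroup $T:\Sigma_{\theta_1}\to\LH$ for some $\theta_1\in(0,\pi/2]$, with $\|T(z)\|\le e^{w|z|}$ on $\Sigma_{\theta_1}$. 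Setting $\tilde T(w):=T(we^{-i\theta_0})$ produces a holomorphic semigroup on the rotated sector $e^{i\theta_0}\Sigma_{\theta_1}=\{w:\arg w\in(\theta_0-\theta_1,\theta_0+\theta_1)\}$, whose infinitesimal generator along the ray $\arg w=\phi$ is $-e^{i\phi}A$; on the positive real axis this is $-A$.

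The key step, which I expect to be the main obstacle, is to select $\theta_0$ so that $|\theta_0|\le\theta_1$. The $H$-ellipticity of $e^{i\theta_0}a$ combined with continuity forces $W(e^{i\theta_0}a)$ into a translated sector of half-angle $\pi/2-\theta_1$ around the positive real axis; hence $W(a)$ lies in a translated sector of half-angle $\pi/2-\theta_1$ whose axis is the ray $\arg z=-\theta_0$. Accretivity supplies the complementary enclosure $W(a)\subset\overline{\C^+}$. When $W(a)$ is unbounded, letting $[\phi_-,\phi_+]\subset[-\pi/2,\pi/2]$ denote the closed interval of asymptotic arguments of $W(a)$, the optimal choice $\theta_0=-(\phi_-+\phi_+)/2$ aligns the sector axis with the positive real axis and produces the maximal $\theta_1=\pi/2-(\phi_+-\phi_-)/2$; the elementary inequality $|\phi_-+\phi_+|+(\phi_+-\phi_-)\le\pi$ valid on $[-\pi/2,\pi/2]^2$ gives $|\theta_0|\le\theta_1$. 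When $W(a)$ is bounded, the resolvent estimate of Theorem~\ref{th:new23} shows directly that $-A$ generates a holomorphic $C_0$-semigroup.

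With $|\theta_0|\le\theta_1$ secured, I split into two cases. If $|\theta_0|<\theta_1$, the rotated sector contains $[0,\infty)$ in its interior; uniqueness of $C_0$-semigroups with generator $-A$ forces $\tilde T|_{[0,\infty)}=S$, and restricting $\tilde T$ to a symmetric subsector around the positive real axis displays $S$ as a holomorphic $C_0$-semigroup. If $|\theta_0|=\theta_1$, say $\theta_0=\theta_1$ (the opposite sign being symmetric), then $[0,\infty)$ corresponds under $z=we^{-i\theta_0}$ to the boundary ray $\arg z=-\theta_1$ of $\Sigma_{\theta_1}$; quasi-contractivity of $T$ on $\overline{\Sigma_{\theta_1}}$ extends $T$ to a $C_0$-semigroup on this ray whose generator is $-e^{-i\theta_1}(e^{i\theta_0}A)=-A$, and Hille-Yosida uniqueness identifies it with $S$. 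Hence $S(t)=T(te^{-i\theta_1})$ exhibits $S$ as the boundary semigroup of the holomorphic $C_0$-semigroup $T$, which is the required alternative.
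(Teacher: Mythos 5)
Your strategy is genuinely different from the paper's, and the central step has a gap. The paper's proof is much shorter: it takes \emph{any} $\theta\in(-\pi,\pi)$ from Theorem~\ref{th:7.1} (the case $\theta=-\pi$ being excluded by $V\neq H$), notes that m-accretivity of $A$ (from Theorem~\ref{th:new29}) means $-A$ generates a $C_0$-semigroup, sets $B=e^{i\theta/2}A$, and observes that $-e^{\pm i\theta/2}B$ are then both generators; the abstract converse of the boundary-semigroup construction recalled just before the theorem statement immediately gives that $-B$ generates a holomorphic $C_0$-semigroup with $S$ as one of its boundary semigroups. No optimization over $\theta$ is needed, and no information about $W(a)$ beyond accretivity is used.

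Your plan instead tries to \emph{choose} the rotation angle $\theta_0$ so that $|\theta_0|\le\theta_1$, by reading the "asymptotic arguments'' $[\phi_-,\phi_+]$ off the convex set $W(a)$ and declaring $\theta_0=-(\phi_-+\phi_+)/2$ to be "optimal''. The gap is here: you have argued that \emph{any} $\theta_0$ for which $e^{i\theta_0}a$ is $H$-elliptic with holomorphy angle $\theta_1$ is constrained by $W(a)\subset\overline{\C^+}$ together with the sector inclusion, and you have computed the value of $\theta_0$ that would maximize $\theta_1$ subject to those constraints. But Proposition~\ref{prop:53} only provides \emph{some} $\theta$ making $e^{i\theta}a$ essentially positive-coercive; it does not say that the numerical-range constraints characterize the admissible $\theta$'s, and in particular it does not say that your "optimal'' $\theta_0$ is admissible. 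Essential positive-coercivity after rotation by $\theta_0$ is a genuinely stronger condition than the resulting sector inclusion of $W(a)$: the former requires the finite-codimension coercivity of Theorem~\ref{th:54}(ii), which need not survive rotating to a different axis even if the numerical range happens to fit inside the new sector. Without establishing that $e^{i\theta_0}a$ is essentially positive-coercive for your chosen $\theta_0$, the rest of the argument (producing $\theta_1$, the rotated holomorphic sector, and the case split on $|\theta_0|<\theta_1$ versus $|\theta_0|=\theta_1$) does not get off the ground. Secondary, smaller issues: you use convexity of the $H$-numerical range $W(a)$ and a well-defined interval of asymptotic arguments without justification, and the precise relationship between the $H$-ellipticity constants and the holomorphy half-angle $\theta_1$ is asserted rather than derived. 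Your treatment of the "$W(a)$ bounded'' case is fine but vacuous: as you essentially note, boundedness of $W(a)$ forces $A$ bounded, hence $V=H$, which the hypothesis excludes.
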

\begin{proof}
By Theorem~\ref{th:7.1} there exists $\theta\in [-\pi,\pi)$ such that $-e^{i\theta}A$ generates a holomorphic $C_0$-semigroup. Then $\theta\in (-\pi,\pi)$ because otherwise  $-A$ generates a $C_0$-semigroup and hence $A$ is bounded. This contradicts the assumption $V\neq H$. 

Let $B=-e^{i\theta/2} A$. Then $-e^{\pm i\theta/2}B$ generates a  $C_0$-semigroup. Thus $-B$ generates a holomorphic $C_0$-semigroup $T$ and the semigroup generated by $-e^{-i \theta/2}B=-A$ is a boundary semigroup of $T$.  
\end{proof}	
Even though we cannot apply our results on the essential spectrum in the situation of Theorem~\ref{th:7.3} (since $a$ might not be essentially positive-coercive), we note the following result. 
\begin{thm}\label{th:7.4}
Let $V\underset{d}{\hookrightarrow}H$ and let $a:V\times V\to\C$ be continuous, accretive and essentially coercive. If $a$ is not coercive, then $0$ is an eigenvalue of $A$, where $A\sim a$.  
\end{thm}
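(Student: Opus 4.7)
The plan is to exhibit a nonzero $u\in V$ with $a(u,v)=0$ for every $v\in V$; then $u\in D(A)$ with $Au=0$ by the definition of the associated operator, and $0$ is an eigenvalue of $A$. Since $a$ is not coercive, pick $u_n\in V$ with $\|u_n\|_V=1$ and $a(u_n,u_n)\to 0$; by reflexivity of $V$, after passing to a subsequence $u_n\rightharpoonup u$ in $V$. Essential coercivity forces $u\ne 0$, for otherwise $\|u_n\|_V\to 0$, contradicting the normalization.

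To constrain $u$, decompose $a=b+ic$ into its Hermitian and anti-Hermitian parts, where $b(x,y)=\tfrac{1}{2}(a(x,y)+\overline{a(y,x)})$ and $c(x,y)=\tfrac{1}{2i}(a(x,y)-\overline{a(y,x)})$. Accretivity of $a$ is precisely the statement that $b$ is positive semidefinite on $V$, so the Cauchy--Schwarz inequality $|b(x,y)|^2\le b(x,x)\,b(y,y)$ is available. Since $b(u_n,u_n)=\re a(u_n,u_n)\to 0$, this yields $|b(u_n,v)|\to 0$ for each fixed $v\in V$, and passing to the weak limit (via the weak continuity of $b(\cdot,v)$) gives $b(u,v)=0$ for all $v\in V$. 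Equivalently, $a(v,u)=-\overline{a(u,v)}$ for every $v\in V$, and in particular $a(u,u)\in i\mathbb R$.

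The heart of the matter is to force $c(u,\cdot)=0$, since then $a(u,v)=ic(u,v)=0$ for every $v$. Setting $v_n:=u_n-u\rightharpoonup 0$ and expanding gives $a(v_n,v_n)\to -a(u,u)$. If $a(u,u)=0$, essential coercivity applied to $v_n$ yields $u_n\to u$ strongly in $V$ with $\|u\|_V=1$; one then combines the antisymmetric identity from the previous step with the accretivity of $u+tv$ for variable $t\in\mathbb C$ to force $c(u,v)=0$ for every $v\in V$. If instead $a(u,u)\ne 0$, I would invoke Theorem~\ref{th:7.1} together with Corollary~\ref{cor:6.4} (applied to $e^{i\theta}a$, which is essentially positive-coercive by Proposition~\ref{prop:53}) to get $0\notin\sigma_{ess}(A)$; combined with the Fredholm-index-zero property of $\mathcal A:V\to V'$ coming from \cite{ace}, the problem reduces to showing $\ker\mathcal A\ne 0$, and Proposition~\ref{prop:4.1} then pins $0$ down as an isolated eigenvalue with finite-dimensional spectral projection.

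The main obstacle is exactly the step forcing $c(u,\cdot)=0$: unlike $b$, the anti-Hermitian part $c$ admits no Cauchy--Schwarz bound under accretivity, so the convergence $a(u_n,u_n)\to 0$ does not by itself control $c(u,v)$ for arbitrary $v$. Overcoming it requires carefully chosen perturbed test sequences --- typically of the form $u_n-u+t_n v$ with $t_n\to 0$ --- that exploit the antisymmetric relation $a(v,u)=-\overline{a(u,v)}$ together with either essential coercivity applied directly or the Fredholm structure on $V'$, thereby rigidifying $\ker\mathcal A$ and producing the desired eigenvector of $A$ at $0$.
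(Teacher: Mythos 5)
Your derivation of $b(u,\cdot)=0$ via Cauchy--Schwarz for the positive-semidefinite Hermitian part $b$ is correct and is in fact more careful than what the paper writes. The paper's proof asserts the inequality $\re a(u_n,v)\leq (\re a(u_n,u_n))^{1/2}(\re a(v,v))^{1/2}$, attributing it to accretivity; but for a non-Hermitian $a$ the real bilinear form $\re a(\cdot,\cdot)$ need not be symmetric, so Cauchy--Schwarz applies only to the Hermitian part $b$. What accretivity actually yields is $|b(u_n,v)|\leq (\re a(u_n,u_n))^{1/2}(\re a(v,v))^{1/2}$, and hence, in the limit, $b(u,v)=0$ for all $v$, i.e.\ $a(v,u)=-\overline{a(u,v)}$ --- exactly your constraint, not $a(u,v)=0$. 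So the obstacle you single out, namely controlling the anti-Hermitian part $c(u,\cdot)$, is genuine, and neither your sketch nor the paper's short argument overcomes it. Your idea of using accretivity of $u+tv$ does not help either: substituting $a(v,u)=-\overline{a(u,v)}$ makes the linear-in-$t$ terms cancel identically, so $\re a(u+tv,u+tv)\geq 0$ puts no constraint on $c(u,v)$.

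In fact the gap cannot be closed in the stated generality, so the route via ``perturbed test sequences'' you propose will not work for a general accretive form. Take any continuous positive-coercive form $a_0:V_0\times V_0\to\C$ with $V_0\underset{d}{\hookrightarrow} H_0$, put $V:=\C^2\oplus V_0$, $H:=\C^2\oplus H_0$, and
\[
a\bigl((x,w),(y,z)\bigr)=x_2\overline{y_1}-x_1\overline{y_2}+a_0(w,z),
\]
so the $\C^2$-part is $\langle Mx,y\rangle$ with $Mx=(x_2,-x_1)$, a skew-adjoint matrix. Then $a$ is continuous and accretive (since $M+M^*=0$ and $\re a_0\geq 0$), essentially coercive (a weakly null sequence in $V$ has $\C^2$-component converging to $0$ in norm, and the $V_0$-component is then forced to $0$ by the coercivity of $a_0$), and not coercive since $a(u,u)=0$ for $u=\bigl((1,1),0\bigr)\neq 0$. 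Yet the associated operator is $A=M\oplus A_0$, which is invertible because $\sigma(M)=\{\pm i\}$ and $0\in\rho(A_0)$; so $0$ is not an eigenvalue. The statement does hold, by your first step alone, when $a$ is in addition symmetric: then $a=b$ and $b(u,\cdot)=0$ already gives $a(u,\cdot)=0$.
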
 
\begin{proof}
Since $a$ is not coercive, there exist $u_n\in V$ such that $\|u_n\|_V=1$ and $a(u_n,u_n)\to 0$ as $n\to\infty$. We may assume that $u_n\rightharpoonup u$ in $V$. Since $a$ is essentially coercive, it follows that $u\neq 0$. Let $v\in V$. Then 
\begin{eqnarray*}
\re a(u,v) & = & \lim_{n\to\infty} \re a(u_n,v)\\
  & \leq & \limsup_{n\to\infty} (\re (a(u_n,u_n))^{1/2}(\re a(v,v))^{1/2}\\
   & = & 0.	
\end{eqnarray*}	  
(Here we used the accretivity of $a$). This implies that $a(u,v)=0$ for all $v\in V$. Hence $u\in D(A)	$ and $Au=0$. 
\end{proof}	
\begin{rem}\label{rem:7.5}
a) One cannot omit the hypothesis that $a$ is accretive in Theorem~\ref{th:7.4}, see the discussion at the end of Section~\ref{sec:8}.\\
b) In the situation of Theorem~\ref{th:7.4}, the semigroup $S$ generated by $-A$ may not be asymptotically compact. In fact, as is easy to see, a bounded $C_0$-group is never asymptotically compact unless the underlying space is finite dimensional. Thus the semigroup in Example~\ref{ex:7.2} gives what we want.  	
\end{rem}
The semigroup  $S$ obtained in Theorem~\ref{th:7.1} is contractive on a sector $\Sigma_{\theta}$ and asymptotically compact, i.e. $\lim_{t\to\infty}\|S(t)\|_{ess}=0$. This implies automatically that  $\lim_{t\to\infty}\|S(te^{i\beta})\|_{ess}=0$ for all $\beta\in(-\theta,\theta)$ as we will show now. However, as Example~\ref{ex:7.2}  shows, this is not true for $\beta\in[-\theta,\theta]$. Note that the boundary semigroup is obtained by the continuous extension of $S$ to $\overline{\Sigma_{\theta}}$ for the strong operator topology and not the uniform topology. 
\begin{thm}\label{7.5}
	Let $X$ be a complex Banach space $\theta\in(0,\pi/2]$ and let $S:\Sigma_{\theta}\to\LX$ be a holomorphic $C_0$-semigroup such that 
	\[  \|S(z)\|\leq M\mbox{ for all }z\in \Sigma_{\theta}.  \]
	If $\lim_{t\to\infty}\|S(t)\|_{ess}=0$, then for all $0<\beta<\theta$, 
	\[   \lim_{|z|\to\infty,z\in\Sigma_{\beta}}\|S(z)\|_{ess}=0.\] 
\end{thm}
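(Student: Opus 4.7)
The plan is to lift the problem to the Calkin algebra and then exploit the semigroup law together with a simple geometric decomposition of the sector $\Sigma_\beta$. Let $\pi : \mathcal{L}(X) \to \mathcal{L}(X)/\mathcal{K}(X)$ denote the quotient homomorphism and set $\tilde S(z) := \pi(S(z))$ for $z \in \Sigma_\theta$. Since $\pi$ is a contractive algebra homomorphism and $S$ is holomorphic on $\Sigma_\theta$, the map $\tilde S$ is a holomorphic semigroup into the Banach algebra $\mathcal{L}(X)/\mathcal{K}(X)$ satisfying $\|\tilde S(z)\| = \|S(z)\|_{ess} \leq M$ for every $z \in \Sigma_\theta$, and the hypothesis becomes $\|\tilde S(t)\| \to 0$ as $t \to \infty$ along the positive real axis.

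Next I would establish the following geometric fact: given $0 < \beta < \theta$, there exists $\eta > 0$ such that for every $z \in \Sigma_\beta$ with $r := |z|$, both $\eta r$ and $z - \eta r$ lie in $\Sigma_\theta$. Writing $z = re^{i\alpha}$ with $|\alpha| \leq \beta$, the point $z - \eta r = r\bigl[(\cos\alpha - \eta) + i\sin\alpha\bigr]$ has positive real part as soon as $\eta < \cos\beta$, and an elementary trigonometric computation shows that its argument has absolute value strictly below $\theta$ whenever $\eta < \cos\alpha - |\sin\alpha|\cot\theta$. This last inequality holds uniformly for $|\alpha|\leq\beta$ under the single condition $\eta < \sin(\theta-\beta)/\sin\theta$; any such $\eta$ will do, and $\eta r$ is automatically in $\Sigma_\theta$ because it is a positive real number.

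Finally, the semigroup identity, which passes to the Calkin quotient, yields
\[
  \|\tilde S(z)\| \;=\; \|\tilde S(\eta r)\,\tilde S(z - \eta r)\| \;\leq\; \|\tilde S(z - \eta r)\|\cdot\|\tilde S(\eta r)\| \;\leq\; M\,\|\tilde S(\eta r)\|
\]
for every $z \in \Sigma_\beta$ with $|z| = r$, using that $z - \eta r \in \Sigma_\theta$ for the middle inequality. As $|z| \to \infty$ with $z \in \Sigma_\beta$, the positive real number $\eta r$ tends to infinity, so the right-hand side tends to zero by hypothesis, and therefore $\|S(z)\|_{ess} \to 0$ as claimed.

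The only step requiring genuine thought is the geometric lemma: one must observe that a point in the smaller sector can be written as a positive real number of magnitude comparable to $|z|$ plus a point still belonging to the original sector. Once that decomposition is in hand, submultiplicativity of the essential norm and the uniform bound $M$ on $\Sigma_\theta$ finish the argument at once, and there is no need for any Phragm\'en--Lindel\"of type estimate on the vector-valued function $\tilde S$.
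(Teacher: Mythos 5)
Your proof is correct, and it takes a genuinely different (and more economical) route than the paper. The paper argues in two steps: first it shows ray-wise decay, $\lim_{t\to\infty}\|S(tz)\|_{ess}=0$ for every $z\in\Sigma_{\theta}$, by viewing $z\mapsto (t\mapsto S(tz))$ as a holomorphic map into ${\mathcal C}^b([0,\infty),\LX)$, pushing it to the quotient ${\mathcal C}^b/{\mathcal C}_0$ with values in the Calkin algebra, and invoking the uniqueness theorem for holomorphic functions; only then does it use the semigroup law along the boundary rays of $\Sigma_\beta$ to pass to the subsector. You bypass the vector-valued holomorphy and uniqueness-theorem step entirely: your geometric lemma (for $\eta<\sin(\theta-\beta)/\sin\theta$ one has $z-\eta|z|\in\Sigma_\theta$ for all $z\in\Sigma_\beta$, since $\cos\alpha-|\sin\alpha|\cot\theta=\sin(\theta-|\alpha|)/\sin\theta$) lets you write $z=\eta|z|+(z-\eta|z|)$ and conclude directly from submultiplicativity of the Calkin norm that $\|S(z)\|_{ess}\leq M\,\|S(\eta|z|)\|_{ess}$, so only the hypothesis of decay along $(0,\infty)$ is used. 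What each approach buys: yours is more elementary, quantitative, and uniform on $\Sigma_\beta$ by construction (and ray-wise decay on all of $\Sigma_\theta$ follows anyway, since every ray of $\Sigma_\theta$ lies in some $\Sigma_\beta$), while the paper's holomorphy argument is a reusable device for transferring asymptotic information from the real axis to arbitrary rays even in settings where a convenient algebraic decomposition is not available. Incidentally, your decomposition also repairs a weakness in the paper's subsector step: the identity used there, read literally as $S(te^{i\eta})=S(te^{i\eta}-te^{i\beta})S(te^{i\beta})$, places the first factor at a point whose argument is $(\eta+\beta)/2-\pi/2$ in modulus close to $\pi/2-\beta/2$, which need not lie in $\Sigma_\theta$ when $\theta$ is small, whereas your choice of $\eta$ keeps the companion point inside the sector uniformly.
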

\begin{proof}
a) We show that $\lim_{t\to\infty} \|S(tz)\|_{ess}	=0$ for all $z\in\Sigma_{\theta}$. For that we define $F:\Sigma_{\theta}\to {\mathcal C}^b([0,\infty),\LX)$ by $F(z)(t)=S(tz)$. It follows from \cite[Theorem A.7]{ABHN11} that $F$ is holomorphic. Consider the map 
\[  q:  {\mathcal C}^b([0,\infty),\LX) \to {\mathcal C}^b([0,\infty]),\LX/{\mathcal K}(X)) \]
given by $q(f)(t)=\widehat{f(t)}$ where for $T\in\LX$, $\widehat{T}$ is the image in $\LX/{\mathcal K}(X)$ by the quotient map. Then $q$ is linear and bounded. Thus $q\circ F$ is holomorphic. Consider the Banach space ${\mathcal C}_0([0,\infty),\LX/{\mathcal K}(X))$ of all continuous functions $g:[0,\infty)\to \LX/{\mathcal K}(X)$ satisfying 
\[    \lim_{t\to\infty}   \|g(t)\|_{\LX/{\mathcal K}(X)}=0      \]
and denote by $Q:{\mathcal C}^b\to  {\mathcal C}^b/{\mathcal C}_0$ the quotient map. Then $G:=Q\circ q\circ F$ is holomorphic. Since $G(t)=0$ for all $t>0$, it follows from the Uniqueness Theorem that $G(z)=0$ for all $z\in\Sigma_{\theta}$; i.e. 
\[ \lim_{t\to\infty}\|S(tz) \|_{ess}=\lim_{t\to\infty}\|\widehat{S(tz)}\|_{\LX/{\mathcal K}(X)}=0.  \]
b) Let $0<\beta<\theta$ and $\varepsilon>0$. There exists $t_0>0$ such that $\|S(e^{\pm i\beta}t)\|_{ess}\leq \varepsilon$ for all $t\geq t_0$. Let $z=te^{i\eta}$, $t\geq t_0$, $\eta\in [0,\beta]$. Then 
\[   \|S(te^{i\eta})\|=\|S(e^{i(\eta-\beta)})S(te^{i\beta})\|\leq M\| S(te^{i\beta})\|\leq M\varepsilon. \]   
If $\eta\in[-\beta,0]$ the argument is similar. 
\end{proof}
\section{Selfadjoint operators revisited}\label{sec:8}
In this section we reconsider Theorem~\ref{th:new26} and give a complete characterization if $j$ is injective. 

Let $V,H$ be Hilbert spaces such that $V\underset{d}{\hookrightarrow}H$ and let $a:V\times V\to\C$ be a continuous sesquilinear form. We define the numerical range of $a$ (with respect to $H$) by 
\[  W(a):=\{  a(u,u):u\in V,\|u\|_H=1 \}.  \]
For $\lambda\in\C$ we define $a_\lambda:V\times V \to\C$ by 
\[  a_\lambda (u,v)=a(u,v)-\lambda \langle u,v\rangle_H .\]
This is consistent with the notations of Section~\ref{sec:3}.1 for $j=\Id$. We first give the following characterization of coerciveness in terms of the numeraical range (in $H$). 
\begin{prop}\label{prop:8.1}
Let $\lambda\in\C$. The following assertions are equivalent:
\begin{itemize}
	\item[(i)]  $a_\lambda$ is coercive;
	\item[ii)]  $\lambda\not\in \overline{W(a)}$ and $a_\lambda$ is essentially coercive.
\end{itemize}	
\end{prop}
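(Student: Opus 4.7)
The plan is to prove the two implications separately, with the reverse direction being more delicate and requiring a weak compactness argument together with a normalization trick.

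For $(i) \Rightarrow (ii)$: Both parts are straightforward. Essential coerciveness is immediate: if $u_n \rightharpoonup 0$ in $V$ and $a_\lambda(u_n,u_n) \to 0$, then coerciveness gives $\alpha \|u_n\|_V^2 \leq |a_\lambda(u_n,u_n)| \to 0$. For $\lambda \notin \overline{W(a)}$, I would argue by contradiction: if $\lambda \in \overline{W(a)}$, pick $u_n \in V$ with $\|u_n\|_H = 1$ and $a(u_n,u_n) \to \lambda$; then $a_\lambda(u_n,u_n) = a(u_n,u_n) - \lambda \to 0$, so coerciveness forces $\|u_n\|_V \to 0$, whence $\|u_n\|_H \to 0$ by the continuous embedding $V \underset{d}{\hookrightarrow} H$, contradicting $\|u_n\|_H = 1$.

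For $(ii) \Rightarrow (i)$: I would argue by contradiction. Assume $a_\lambda$ is not coercive; then there exist $u_n \in V$ with $\|u_n\|_V = 1$ and $a_\lambda(u_n, u_n) \to 0$. By reflexivity of $V$, pass to a subsequence with $u_n \rightharpoonup u$ in $V$. Now split into two cases on $u$. If $u = 0$, essential coerciveness of $a_\lambda$ immediately gives $\|u_n\|_V \to 0$, contradicting $\|u_n\|_V = 1$. If $u \neq 0$, I would use that $V \hookrightarrow H$ is an injective continuous linear map, so $u \neq 0$ in $H$, and weak convergence in $V$ transfers to weak convergence in $H$ (any $\phi \in H$ gives a continuous functional on $V$ via $\langle \cdot, \phi\rangle_H$). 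Thus $\liminf_n \|u_n\|_H \geq \|u\|_H > 0$, so after passing to a further subsequence $\|u_n\|_H \geq \delta > 0$. Set $w_n := u_n/\|u_n\|_H$; then $a(w_n, w_n) \in W(a)$, and
\[
a(w_n, w_n) - \lambda = \frac{a_\lambda(u_n, u_n)}{\|u_n\|_H^2},
\]
whose modulus is dominated by $|a_\lambda(u_n,u_n)|/\delta^2 \to 0$. Therefore $a(w_n, w_n) \to \lambda$, showing $\lambda \in \overline{W(a)}$, a contradiction.

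The main obstacle is the case $u \neq 0$ in the $(ii) \Rightarrow (i)$ direction: essential coerciveness only controls weak-null sequences, so one must convert the failure of coerciveness into information about $W(a)$ rather than about $a_\lambda$ directly. The normalization $w_n := u_n/\|u_n\|_H$ does this, provided one first rules out $\|u_n\|_H \to 0$, which is exactly where the injectivity and continuity of $V \hookrightarrow H$, together with weak lower semicontinuity of the norm in $H$, come into play. The rest is straightforward bookkeeping.
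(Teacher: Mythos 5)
Your proof is correct, and the underlying ideas are the same as the paper's. The one structural difference: the paper begins by isolating the observation that $\lambda\notin\overline{W(a)}$ is equivalent to the existence of $\beta>0$ with $|a_\lambda(u,u)|\geq\beta\|u\|_H^2$ for all $u\in V$ (their (8.1)); this is precisely what your normalization $w_n:=u_n/\|u_n\|_H$ establishes. Stating that bound up front lets the paper conclude $\|u_n\|_H\to 0$ directly in the proof of $(ii)\Rightarrow(i)$, which forces $u=0$ and makes your case split on whether $u=0$ unnecessary. Your version redoes the normalization inside that proof instead of extracting it as a lemma, but the argument is otherwise the same and equally valid.
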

\begin{proof}
To say that $\lambda\not\in  \overline{W(a)}$ is equivalent to the existence of $\beta>0$ such that 
\begin{equation}\label{eq:8.1}
|a_\lambda (u,u)|\geq \beta \|u\|_H^2\mbox{ for all }u\in V. 
\end{equation}	
If $a_\lambda$ is coercive, i.e. if there exists $\alpha>0$ such that 
\[ |a_\lambda(u,u)|\geq \alpha \|u\|_V^2  \mbox{ for all }u\in V,      \] 
then 
\[ |a_\lambda(u,u)|\geq \frac{\alpha}{c_H^2} \|u\|_H^2  \mbox{ for all }u\in V,      \] 
where $\|u\|_H\leq c_H\|u\|_V$ for all $u\in V$. \\
$(i)\Rightarrow (ii)$ The coercivity of $a_\lambda$  implies (\ref{eq:8.1}) with $\beta=\frac{\alpha}{c_H^2}$, which implies that $\lambda\not\in \overline{W(a)}$. Moreover, the coercivity always implies the essential coercivity. \\
$(ii)\Rightarrow (i)$ Assume that $a_\lambda$ is not coercive. Then there exist $u_n\in V$ such that $\|u_n\|_V=1$ and $a(u_n,u_n)\to 0$. Passing to a subsequence we may assume that $u_n\rightharpoonup u$ in $V$. It follows from (\ref{eq:8.1}) that $\|u_n\|_H\to 0$. Consequently $u=0$. Thus $a_\lambda$ is not essentially coercive.    
\end{proof}	   
If $a$ is symmetric, then $\overline{W(a)}\subset\R$. So Proposition~\ref{eq:8.1} applies to each $\lambda\in \C\backslash\R$ and therefore, for such $\lambda$, it is equivalent to say that $a_\lambda$ is essentially coercive or coercive.

We denote by $A$ the operator associated with $a$ on $H$. From Theorem~\ref{th:new26} we know that $A$ is selfadjoint whenever $a_\lambda$ is essentially coercive for some $\lambda\in\C$. Here is a characterization of the operators obtained in this way. 
\begin{thm}\label{th:8.2}
Let $A$ be an operator on a Hilbert space $H$. The following assertions are equivalent:
\begin{itemize}
	\item[(i)]   $A$ is selfadjoint and semibounded;
	\item[ii)]   there exist a Hilbert space $V\underset{d}{\hookrightarrow} H$ and a symmetric, continuous form $a:V\times V\to\C$ such that $a_\lambda$ is essentially coercive for some $\lambda\in\C$ and $A\sim a$.    
\end{itemize}	
In that case the form in $(ii)$ is unique.
\end{thm}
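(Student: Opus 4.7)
The plan is to reduce each implication to tools developed earlier in the paper. For (ii)$\Rightarrow$(i), the selfadjointness of $A$ is immediate from Theorem~\ref{th:new26}. To obtain semiboundedness, I apply Proposition~\ref{prop:53} to the essentially coercive form $a_\lambda$ to obtain $\theta\in\R$ with $e^{i\theta}a_\lambda$ essentially positive-coercive, and then Proposition~\ref{prop:6.1} to conclude that this form is $H$-elliptic:
\[
\re\bigl(e^{i\theta}a_\lambda(u,u)\bigr) + w\|u\|_H^2 \geq \alpha\|u\|_V^2,\quad u\in V.
\]
Since $a$ is symmetric, $a(u,u)\in\R$, so the left-hand side becomes $\cos\theta\cdot a(u,u) + (w - \re(e^{i\theta}\lambda))\|u\|_H^2$. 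If $\cos\theta>0$, this makes $a$ itself $H$-elliptic, so $A+M\Id$ is m-accretive for a suitable $M$; as $A$ is selfadjoint, $A\geq -M$. If $\cos\theta<0$, the analogous estimate for $-a$ gives $A$ bounded above. In the borderline case $\cos\theta=0$, the inequality reduces to $C\|u\|_H^2\geq\alpha\|u\|_V^2$ for some $C>0$, forcing $V=H$ with equivalent norms; then $a$ is bounded on $H\times H$ and $A$ is a bounded selfadjoint operator, trivially semibounded.

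For (i)$\Rightarrow$(ii), I use the classical square-root construction. Assume first $A\geq c$ and set $B:=A+(1-c)\Id\geq\Id$. Define $V:=D(B^{1/2})$ with inner product $\langle u,v\rangle_V:=\langle B^{1/2}u,B^{1/2}v\rangle_H$; then $V\underset{d}{\hookrightarrow}H$ is a Hilbert space. The form $a(u,v):=\langle B^{1/2}u,B^{1/2}v\rangle_H-(1-c)\langle u,v\rangle_H$ is symmetric and continuous on $V\times V$ with $A\sim a$, and $a_{c-1}(u,u)=\|u\|_V^2$, so $a_{c-1}$ is coercive and hence essentially coercive. When $A$ is bounded above, apply this construction to $-A$ and take $a$ to be the negative of the resulting form.

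Uniqueness of $(V,a)$ in (ii) then follows from Kato's uniqueness theorem for closed sectorial forms \cite[VI.~Theorem~2.7]{Ka80}: by the argument in (ii)$\Rightarrow$(i), after adding a suitable real multiple of $\langle\cdot,\cdot\rangle_H$, every $a$ satisfying (ii) becomes a closed, symmetric, $H$-elliptic form representing the same shifted selfadjoint operator, which determines it uniquely.

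The main subtlety I anticipate is the borderline case $\cos\theta=0$ in (ii)$\Rightarrow$(i); one must recognise that the $H$-ellipticity inequality there is not vacuous but rather forces the identification $V=H$, which is the only way the estimate can hold consistently. Once this is handled, the remainder is routine manipulation of $H$-ellipticity and invocation of Kato's form theory.
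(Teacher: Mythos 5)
Your argument is correct, and for (ii)$\Rightarrow$(i) it is essentially the paper's: the paper first normalizes via Lemma~\ref{lem:perturb} to get $a_i$ essentially coercive and then runs exactly your chain (Proposition~\ref{prop:53}, Proposition~\ref{prop:6.1}, symmetry of $a$, and the same three-way case distinction on $\re e^{i\theta}$, including the borderline case forcing $V=H$), so your direct computation with general $\lambda$ is only a cosmetic variant. The genuine differences are in (i)$\Rightarrow$(ii) and in uniqueness. For (i)$\Rightarrow$(ii) the paper invokes the Spectral Theorem to realize $A$ as multiplication by $m$ on $L^2(\Omega,\mu)$, takes $V$ to be the weighted space with norm $\int(1+|m|)|u|^2\,d\mu$, and verifies that $a_i$ is coercive; you instead use the square-root (Friedrichs-type) construction $V=D(B^{1/2})$, $B=A+(1-c)\Id\ge\Id$, which yields $a_{c-1}(u,u)=\|u\|_V^2$ directly, with a real shift. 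Both are standard; yours is shorter but leans on Kato's second representation theorem to identify the operator associated with $\langle B^{1/2}\cdot,B^{1/2}\cdot\rangle_H$ as $B$, while the paper's multiplication-operator model makes $A\sim a$ elementary to check. For uniqueness, the paper does not reuse the case analysis: it shows that for a lower-bounded $A$ any form as in (ii) satisfies $a(u,u)+w\|u\|_H^2\ge\|u\|_H^2$ on all of $V$ (using density of $D(A)$ in $V$ from Theorem~\ref{th:7.1}), whence $a_{-w}$ is essentially coercive by Lemma~\ref{lem:perturb} and then actually coercive by Proposition~\ref{prop:8.1}, so $a$ is a closed sectorial form and Kato's uniqueness applies. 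Your shortcut through the (ii)$\Rightarrow$(i) case analysis reaches the same conclusion, but you should make explicit the sign bookkeeping: when $\cos\theta<0$ it is $-a$ that is $H$-elliptic and one compares closed forms associated with shifts of $-A$ (excluded anyway when $A$ is unbounded below is impossible, i.e.\ when $A$ is bounded below and unbounded), and when $A$ is bounded all cases collapse to $V=H$ and $a(u,v)=\langle Au,v\rangle_H$. With that made precise, your proof is complete.
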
     
\begin{proof}
$(i)\Rightarrow (ii)$ By the Spectral Theorem, we may assume that $H=L^2(\Omega, \mu)$ with $(\Omega, \Sigma,\mu)$ a measure space, and that $A$ is a multiplication operator, i.e. there exists a measurable function 
$m:\Omega\to\R$ such that 
\[ D(A) =\{ u\in L^2(\Omega,\mu):\; mu\in L^2(\Omega,\mu)  \},\;\; Au=mu. \] 
Define $V:=\{  u\in L^2(\Omega, \mu):\int_{\Omega} |m||u|^2d\mu<\infty  \}$. Then $V$ is a Hilbert space for the norm
\[   \|u\|_V^2:=\int_{\Omega}(1+|m|)|u|^2 d\mu   \]
and $V\hookrightarrow H$. Since $A$ is densely defined as selfadjoint operator and $D(A)\subset V$, it follows that $V$ is dense in $H$. 

Define $a:V\times V\to\C$ by 
\[  a(u,v)\int_{\Omega} m u\overline{v}d\mu.   \]
Then $a$ is a continuous, symmetric from and it is easy to see that $A\sim a$.  Now assume that $A$ is bounded below. Then $m\geq -w$ $\mu$-a.e. for some $w\geq 0$. We show that $a_i$ is coercive. Let $u_n\in V$ such that 
\[a_i (u_n,u_n)=\int_{\Omega}m|u_n|^2d\mu -i\int_{\Omega}|u_n|^2d\mu\to 0\mbox{ as }n\to\infty.\] 
Then \[\|u_n\|_H^2=-\Im a_i(u_n,u_n)\to 0.\]
Moreover, \[   \int_\Omega m|u_n|^2d\mu =\re a_i (u_n,u_n)\to 0.\]
Consequently, 
\begin{eqnarray*}
\|u_n\|_V^2 & = & \int_{\Omega} (m^++ m^-)|u_n|^2 d\mu +\int_{\Omega}|u_n|^2d\mu \\ 
& \leq & \int_{\Omega}m^+|u_n|^2d\mu +(w+1)\int_{\Omega}|u_n|^2 d\mu\\
& \leq & \int_{\Omega} 	m |u_n|^2 d\mu +(2w+1)\int_{\Omega}|u_n|^2 d\mu\\
 & & \rightarrow 0  \mbox{ as } n\to\infty.
\end{eqnarray*}	 
We have shown that $a_i(u_n,u_n)\to 0$ implies that $\|u_n\|_V\to 0$ as $n\to\infty$. This is equivalent to $a_i$ being coercive.  \\
 If $A$ is bounded above, applying the argument to $-A$, we deduce that $(-a)_i=-a_{-i}$ is coercive. Then also $a_{-i}$ is coercive and by Lemma~\ref{lem:perturb}	and Proposition~\ref{prop:8.1} also $a_i$ is coercive.\\
$(ii)\Rightarrow (i)$ We know from Theorem~\ref{th:new26} that $A$ is selfadjoint. It remains to show that 
$A$ is semibounded.  Since $a_\lambda$ is essentially coercive, it follows from Lemma~\ref{lem:perturb} that $a_i$ is essentially coercive. By Proposition~\ref{prop:53}, there exists $\theta\in\R$ such that $e^{i\theta}a_i$ is essentially  positive-coercive and so $H$-elliptic by Proposition~\ref{prop:6.1}. Thus there exist $w\in\R$, $\alpha>0$ so that 
\[   \re e^{i\theta}(a(u,u)-i\|u\|_H^2) +w\|u\|_H^2\geq \alpha \|u\|_V^2\mbox{ for all }u\in V.  \]   
Consequently 
\[   \re e^{i\theta} a(u,u)+(w+1)\|u\|_H^2\geq \alpha \|u\|_V^2\mbox{ for all }u\in V.  \]  
First case: $\re e^{i\theta}=0$. Since $V\underset{d}{\hookrightarrow} H$, it follows that the norms of $V$ and $H$ are equivalent. Thus $V=H$, i.e. $A$ is a bounded operator. \\
Second case:   $\re e^{i\theta}>0$. Then 
\[  \langle Au,u\rangle_H \geq \frac{-(w+1)}{\re e^{i\theta}}\|u\|_H^2\mbox{ for all }u\in D(A).   \]
Thus $A$ is bounded below. \\
Third case:  $\re e^{i\theta}<0$. Then the second case implies that $-A$ is bounded below, i.e. $A$ is bounded above, which proves $(i)$. 

We now prove uniqueness. Let $A$ be a selfadjoint operator which is bounded below, say
\[  \langle Au,u\rangle_H\geq (-w+1)\|u\|_H^2\mbox{ for all }u\in D(A) \mbox{ and some }w\geq 0.  \]
Let $A\sim a $ where $a:V\times V\to\C$ is a symmetric, continuous form, which is essentially  coercive. Since, by Theorem~\ref{th:7.1}, $D(A)$ is dense in $V$, it follows that 
\[ a(u,u)+w\|u\|_H^2\geq \|u\|_H^2\mbox{ for all }u\in V.  \]
Thus $0\not\in \overline{W(a_{-w})}$. It follows from Lemma~\ref{lem:perturb} that $a_{-w}$ is essentially coercive. Now Proposition~\ref{prop:8.1}  implies that $a_{-w}$ is coercive. Thus $a$ is $H$-elliptic, or equivalently, $a$ is a sectorial closed form.  By \cite[Theorem~2.7]{Ka80}, there is only one sectorial closed form to which $A$ is associated.     
 \end{proof}	
We illustrate Theorem~\ref{th:8.2}  by considering a lower bounded selfadjoint operator $A$ on a Hilbert space $H$. Let \[\lambda_1:=\inf\{   \langle Au,u\rangle_H:u\in D(A),\|u\|_H=1\}\]
 be the \emph{lower bound} of $A$. Assume that there exists $\lambda_{ess}>\lambda_1$ such that $\sigma_{ess}(A)=[\lambda_{ess},\infty)$. Denote by $a$ the unique form of Theorem~\ref{th:8.2} such that $A\sim a$. Then the following assertions hold.\\
 a) The form $a_\lambda$ is coercive if and only if $\lambda\in\C\backslash\R$ or $\lambda\in\R$, $\lambda<\lambda_1$. \\
 b) For $\lambda\in\R$, $a_\lambda$ is essentially coercive if and only if $\lambda < \lambda_{ess}$. \\
 c) The lower bound $\lambda_1$ belongs to $\sigma_p(A)$. \\
 d)  If $\lambda\in (\lambda_1,\lambda_{ess}) \cap \rho(A)$, then $a_\lambda$ is essentially coercive but not coercive. Observe that $A-\lambda\Id \sim a_\lambda$, but $0\not\in \sigma_p(A-\lambda\Id)$ (cf. Theorem~\ref{th:7.4}). \\
 \begin{proof}
 a) follows from Proposition~\ref{prop:8.1}.\\
 b) follows from Corollary~\ref{cor:6.4}.\\
 c) follows from Theorem~\ref{th:7.1} and is well-known. \\
 d) is a consequence of b). 	
\end{proof}

\noindent \textbf{Acknowledgments:}  This research is partly supported by the B\'ezout Labex, funded by ANR, reference ANR-10-LABX-58.   

\bibliographystyle{abbrv}

\end{document}